\newcounter{cnstcnt}
\newtheorem{thm}{Theorem}[section]
\newtheorem{lem}[thm]{Lemma}
\newtheorem{rmk}[thm]{Remark}
\newtheorem{prop}[thm]{Proposition}
\newtheorem{defi}[thm]{Definition}
\numberwithin{equation}{section}
\def\eps{\epsilon}
\def\P{\mathbb{P}}
\def\cA{\mathcal{A}}
\newcommand{\cC}{\mathcal{C}}
\newcommand{\sB}{\mathcal{B}}
\def\0{\textbf{0}}
\def\Tri {\mathrm{STri}}
\def\Dtri {\mathrm{DTri}}
\def\deg {\mathrm{deg}}
\def\dist{\mathrm{dist}}
\def\Prr {\mathrm{Prr}}
\def\Noi {\mathrm{Noi}}
\def\sB {\mathscr B}
\def\sC {\mathscr C}
\def\fS {\mathsf{S}}
\def\U {\mathsf{U}}
\def\frB{\mathfrak{B}}
\DeclareSymbolFont{sfoperators}{OT1}{ptm}{m}{n}
\title{Non-ergodicity for the noisy majority vote process on trees}
\author{Jian Ding\\Peking University\and Fenglin Huang \\Peking University}
\date{}
\begin{document}
\maketitle
\begin{abstract}
    We consider the noisy majority vote process on infinite regular trees with degree $d\geq 3$, and we prove the non-ergodicity, i.e., there exist multiple equilibrium measures.  Our work extends a result of Bramson and Gray (2021) for $d\geq 5$.
\end{abstract}
\section{Introduction}
\subsection{Noisy majority vote process}
We consider the noisy majority vote process  on a connected graph $G$ (with vertex set $V$). For each vertex $x \in V,$ let $N(x)$ be the collection of neighbors of $x$. In this paper, we focus on the case that $G = \mathbb T_d$ where $\mathbb T_d$ is the infinite tree in which each site has $d$ neighbors.

The noisy majority vote process with noise $\eps\in (0,1)$ is defined as follows. At each discrete time $t$, each vertex possesses a spin $-1$ or $1$, denoted as $\sigma(t)\in \{-1,1\}^V$. At each discrete time $t$, one of the two update events can occur at every vertex $x$. The first is a \textbf{vote event}, in which the spin at $x$ is updated to the majority of the spins in $N(x)$ at time $t-1$ (where we regard $N(x)$ as the voting neighborhood for $x$). When a tie vote occurs, the spin at $x$ does not change, i.e. $\sigma_x(t)=\sigma_x(t-1)$. Vote events are assumed to occur independently with probability $1 -\eps$ at each site. The other type of event is a \textbf{noise event}, which occurs with probability $\eps$ at each site, where the spin of the chosen vertex is updated to -1 and 1 with equal probability. Put in the context of social sciences, this process may describe the evolution of opinions of a population when the underlying graph is the social network (where an opinion is identified with a spin).

The main purpose of the paper is to consider whether the dynamic is ergodic (i.e., whether the equilibrium measure is unique). In \cite{BG21}, they have shown that when $d\ge 5$, there are uncountably many mutually singular equilibria for small enough $\eps>0$; thus, the dynamic is not ergodic. When $d=2$, since $\mathbb T_2= \mathbb Z$, a general result in \cite{gray1982positive} implies that the noisy majority vote process has a unique equilibrium for all $\eps > 0 $.
\begin{thm}\label{thm-main}
    For any $d\ge 3$, the noisy majority vote process on $\mathbb T_d$ is not ergodic for small enough $\eps>0$.
\end{thm}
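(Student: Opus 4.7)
The plan is to construct two distinct translation-invariant equilibrium measures by exhibiting an ``all-plus biased'' equilibrium and, by symmetry, an ``all-minus biased'' equilibrium. Concretely, start the dynamics at time $-T$ from the all-plus configuration, let $\mu^{\plus}_T$ be the law at time $0$, and show that for $\eps$ sufficiently small there exists $\delta>0$, independent of $T$, with $\mu^{\plus}_T[\sigma_o = +1] \ge 1/2 + \delta$. Any subsequential weak limit of Ces\`aro averages of $\mu^{\plus}_T$ is then a translation-invariant equilibrium with $\langle \sigma_o\rangle \ge 2\delta$, and the reflected construction gives another with $\langle \sigma_o\rangle \le -2\delta$.

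The core estimate is a Peierls-type space-time contour argument adapted to the tree. If $\sigma_o(0)=-1$ while $\sigma_\cdot(-T)\equiv+1$, trace backwards in time: at each space-time point $(x,s)$ with $\sigma_x(s)=-1$, the minus was caused either by a noise event (cost $\le \eps$) or by a vote event whose voting neighborhood $N(x)$ contained at least $\lceil d/2\rceil$ minuses at time $s-1$. Recursively exploring through the latter option yields a space-time subtree $\mc{T}$ rooted at $(o,0)$, terminating only at noise events or at time $-T$. Because the underlying graph is a tree, the spatial projections of the branches at a vote event lie in disjoint subtrees, so the recursion produces a genuine combinatorial tree with a controllable branching structure. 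Given $\mc T$ of total size $n$ with $k$ noise nodes, its probability factorizes into at most $\eps^k$ from the noise nodes and a conditional probability from the vote nodes that uses up independent randomness in disjoint sub-trees.

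The bulk of the proof is a careful counting / resummation: control the number of admissible contours of given size by a branching factor depending on $d$, and balance it against the small parameter $\eps$. For $d\ge 5$ this is essentially the Bramson--Gray argument, where each vote node forces $\lceil d/2\rceil \ge 3$ descendant minuses, so the contour has exponential cost even when all vote nodes appear without noise. The main obstacle, and where the present paper must depart from \cite{BG21}, is $d\in\{3,4\}$: here each vote node forces only $2$ descendant minuses, and the naive entropy count barely (or fails to) beat the energy bound. To handle this I would introduce a multiscale/renormalization block structure: partition $\mathbb T_d$ into blocks of depth $L$, call a block ``good'' at time $t$ if it is entirely $+1$, and prove a one-step contraction stating that if a large enough fraction of a block's neighboring blocks is good at time $t-\tau$ then the block is good at time $t$ with very high probability (uniformly in $L$ and $\eps$ once $L$ is chosen large and $\eps$ small). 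Iterating this block dynamics gives the desired uniform bound on $\mu^{\plus}_T[\sigma_o=-1]$.

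The hard step is the single-scale contraction for $d=3,4$: one must show that a sufficiently ``plus-biased'' space-time boundary condition of a finite block forces plus at its center with probability $1-o(1)$ as $\eps\to 0$, despite the weak one-step drift of the underlying majority rule. I would attack this by an inductive analysis along the tree depth inside the block, exploiting that even a single plus neighbor at the previous step substantially biases the vote on $\mathbb T_d$, together with the exponential growth of $\mathbb T_d$ ensuring enough independent randomness on the block's boundary. Once the contraction is established, the outer Peierls sum converges and non-ergodicity follows as outlined above.
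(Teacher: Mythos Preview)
Your outer framework---start from all plus, show a uniform-in-time bias $\P(\sigma_o(t)=-1)\le c<\tfrac12$, and conclude by symmetry---matches the paper exactly. But from there the arguments diverge, and your sketch for $d\in\{3,4\}$ has a real gap.

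The paper does \emph{not} use any block renormalization. Instead it proves, by induction on $t$, a uniform bound on $\P(\sigma_A(t)=-1)$ for \emph{every} finite collection $A$ of odd vertices, not just singletons. The induction hypothesis (Theorem~\ref{thm: minus separate product}) is delicately structured: for each odd cluster $A_i\subset A$ it carries correction factors $c_2^{\Tri(A_i)}c_3^{\Dtri(A_i)}$ counting branching vertices (``trifurcations'') of $A_i$. The key combinatorial observation is that if a cluster of $m$ vertices is voted to $-1$, the responsible $B$ at the previous time must satisfy $|B|\ge m+1$ (Lemma~\ref{lem: leaf trifurcation relation}), yielding one extra factor of $\eps$ per cluster; the trifurcation bookkeeping then controls how the branching structure of $B$ relates to that of $A$, and two enumeration lemmas (Lemmas~\ref{lem: enumeration induced relation} and \ref{lem: enumeration lemma final}) bound the number of admissible $B$'s sharply enough to close the induction. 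This is what replaces the naive $\lceil d/2\rceil\ge 3$ gain you lose when $d=3,4$.

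Your proposed alternative---a one-step block contraction on depth-$L$ subtrees---is left as a hope rather than an argument. You state the hard step (``a sufficiently plus-biased boundary forces plus at the center with probability $1-o(1)$'') but give no mechanism for why it should hold when each vote only needs two minus neighbors; the phrase ``exploiting that even a single plus neighbor \dots substantially biases the vote'' is not true for the minus-biased rule that one is effectively fighting against on $\mathbb T_3$. Without that contraction your renormalization never gets off the ground, and nothing in the proposal indicates how to obtain it. So as written this is a plan that works for $d\ge 5$ (where it reduces to Bramson--Gray) but has a genuine missing idea for $d=3,4$, which is precisely the new content of the paper.
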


\subsection{Related works}

Our work falls under the umbrella of how disorder/noise affects fundamental properties of physical systems, for which the spin system is a particularly important class of examples.  There are a number of ways to introduce disorder to a spin system as we elaborate next.

\begin{enumerate}[i]
    \item Spin models with random external fields.

A prominent example is the random field Ising model. By introducing random external fields to spins, the RFIM studies how random perturbations compete with the spin interactions, leading to complex and rich behaviors that have motivated extensive research. Considerable progress has been made on the RFIM, including phase transitions in three and higher dimensions (\cite{IM75, Imr85, BK88, DZ24, DLX24}) and quantitative decays on magnetization in two dimensions (\cite{AW89, AW90, Cha18, AP19, DX21, AHP20, DW23,  ding2023phase}). In more generality, spin systems with random external fields, including the random field Potts model and the random field XY model, were studied (\cite{DZ24,DHP24,CR24}).    
\item Spin models with disordered interactions.

As a mean-field model, the Sherrington-Kirkpatric spin glass model has been extensively studied, and many important results have been established; see e.g., \cite{Pan13, Tal11I, Tal11II} for a not-up-to-date summary of existing results. On the contrary, the model on lattices, known as the Edwards-Anderson spin glass model, is much more challenging: despite progress made in \cite{chatterjee2023spin, AD14, ADNS10, AH20, ANS19, ANS21, ANSW14, ANSW16, NS98, NS92, NS01, NS03}, many important and basic properties remain elusive.
\item Non-equilibrium models, such as the non-equilibrium random field Ising model. 

In a recent paper \cite{ding2024dynamical}, the authors investigated both ground state evolution and Glauber dynamics of the zero-temperature random field Ising model. For the ground state evolution, they proved that there is no avalanche in two dimensions, and there is a phase transition for avalanche in three dimensions and higher as the strength of the disorder varies. The avalanche for the Glauber dynamics seems of major challenge, and the authors can only prove some preliminary results that most spin flips occur around some critical threshold as the noise strength vanishes.
\item  Noisy dynamics in spin systems, such as the noisy majority vote process. 

The well-known positive rate conjecture posits that any one-dimensional, finite-range spin model with a positive flipping rate is ergodic. In \cite{gray1982positive}, the conjecture for nearest-neighbor and attractive models was proved. However, a counterexample provided in \cite{Gac86, Gac01} disproved the conjecture. For higher dimensions, progress has been limited. Ergodicity has been established for two-dimensional oriented lattices \cite{Too80}, while for infinite regular trees, non-ergodicity has been demonstrated for $d\ge 4$ in the oriented case and $d\ge 5$ in the unoriented case \cite{BG21}.
\end{enumerate}

\subsection{A word on proof strategy for $\mathbb{T}_3$}
In \cite{BG21}, the authors constructed a family of uncountably many initial configurations with different limiting distributions. The key idea in their proof is to introduce an auxiliary process that dominates the deviation of the original process from its initial configuration. This domination allows them to derive how interfaces between plus and minus spins evolve from the initial configuration. However, in the unoriented case, extending this domination to $d=3,4$ seems challenging, which prevents them from proving the non-ergodicity for $\mathbb T_3$ and $\mathbb T_4$.

In this paper, we adopt a different strategy. The main novelty lies in the case $d=3$ and the extension to $d\ge 3$ follows naturally from the framework developed for $d=3$. In order to establish non-ergodicity, we show that for the initial configuration of all plus, the probability of a vertex being $1$ has a limit strictly bigger than $\frac{1}{2}$, as $t\to \infty$. Our intuition is as follows: if a set of vertices $A$ is voted to become -1 at time $t$, there must exist at least $|A|+1$ minus vertices at time $t-1$ that are responsible for these votes. Using this insight, we apply induction on $t$ to obtain an upper bound on the probability that a set of vertices becomes -1, as detailed in Section~\ref{sec: induction procedure}.

To carry out the induction procedure, a key step is to control the enumeration of possible configurations at time $t-1$ that can lead to the target configuration at time $t$. To this end, we prove two combinatorial results that upper-bound the enumeration of such configurations, as detailed in Lemmas \ref{lem: enumeration induced relation} and \ref{lem: enumeration lemma final}.

\subsection{The extension to $d\ge 3$}
In order to extend to the case $d\ge 3$, we introduce the following model. From now on, we fix the initial configuration as all plus, i.e. $\sigma_x(0)=1,~~\forall x\in V$.

For any $d\ge 3$, we consider a variation of the majority vote model, which will be referred to as the {\bf minus biased vote process}.
The minus biased vote process with noise $\eps\in (0,1)$ is defined as follows.  At each discrete time, each vertex possesses a spin $-1$ or $1$, denoted as $\sigma(t)\in \{-1,1\}^V$.  Recall that we have assumed that the initial configuration is all plus, and at each time $t \in \mathbb N$ one of the following two updates occurs at each vertex $x\in V$ independently: \begin{enumerate}[(1)]
    \item  with probability $1-\epsilon$, a {\bf minus bias update} occurs which updates the spin at $x$ to $-1$ if $|\{y\in N(x): \sigma_y(t-1) = -1\}| \geq 2$ and updates the spin at $x$ to $1$ otherwise; 
    \item with probability $\epsilon$, a {\bf noise update} occurs which updates the spin at $x$ to $-1$ and $1$ with probability half each.
\end{enumerate}
We use the notation $\P$ to denote the probability with respect to the minus biased vote process.

\begin{thm}\label{thm: minus main higher dimension}
    For any $d\ge 3$, we have \begin{equation}\label{eq: minus main higher dimension}
        \P(\sigma_{v}(t)=-1)\le c<0.5,~~~~\mbox{for all }v\in \mathbb{T}_d 
    \end{equation} for some constant $c>0$ only depending on $d$.
\end{thm}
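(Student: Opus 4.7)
The plan is to prove by induction on $t$ that there exists a constant $c=c(d)\in(0,1/2)$ (for $\eps$ small enough depending on $d$) such that
\begin{equation}
\P\bigl(A\subseteq \sigma^{-1}(-1)(t)\bigr)\le c^{|A|}
\end{equation}
for every finite $A\subseteq V$ and every $t\ge 0$. Specializing to $A=\{v\}$ then yields the theorem. The base case $t=0$ is trivial from the all-plus initial data (the left-hand side is $0$ for $|A|\ge 1$), so everything comes down to the inductive step.

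For the inductive step, I would first decompose according to which vertices of $A$ receive a noise update versus a bias update at time $t$. Writing $S\subseteq A$ for the noise set and $A'=A\setminus S$, the independence of updates across vertices and the $1/2$ rule for noise give
\begin{equation}
\P\bigl(A\subseteq\sigma^{-1}(-1)(t)\bigr)=\sum_{S\subseteq A}(\eps/2)^{|S|}(1-\eps)^{|A'|}\,\P(E_{A'}),
\end{equation}
where $E_{A'}$ is the event that every $v\in A'$ has at least two minus neighbors at time $t-1$. The crucial observation, already highlighted in the strategy outline, is that on $E_{A'}$ the set $B$ of minus neighbors of $A'$ at time $t-1$ satisfies $|B|\ge |A'|+1$: the induced bipartite subgraph of $\mathbb{T}_d$ on $A'\cup B$ is a forest (hence has at most $|A'|+|B|-1$ edges), while the degree condition forces at least $2|A'|$ edges. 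This amplification from $|A'|$ at time $t$ to $|A'|+1$ at time $t-1$ is the engine that drives the induction.

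Applying the inductive hypothesis to $B$ and a union bound over admissible $B$ yields
\begin{equation}
\P(E_{A'})\le \sum_{k\ge |A'|+1} N_d(A',k)\,c^{k},
\end{equation}
where $N_d(A',k)$ is the number of sets $B\subseteq N(A')$ with $|B|=k$ such that every vertex in $A'$ has at least two neighbors in $B$. I would control this enumeration by invoking Lemmas \ref{lem: enumeration induced relation} and \ref{lem: enumeration lemma final}, which exploit that $A'\cup B$ spans a forest in $\mathbb{T}_d$ to deliver a bound of the form $N_d(A',k)\le K(d)^{\,|A'|}\cdot L(d)^{k-|A'|}$ with constants small enough that the extra factor of $c$ from $|B|\ge |A'|+1$ is not washed out. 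Plugging back and summing the geometric series in $k$, one obtains an estimate of the shape $\P(A\subseteq\sigma^{-1}(-1)(t))\le c\cdot K(d)\bigl[\tfrac{\eps}{2}+(1-\eps)L(d)c\bigr]^{|A|}$, and a direct algebraic check shows that for $\eps$ small enough one can pick $c<1/2$ for which this is $\le c^{|A|}$, closing the induction.

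The main obstacle is precisely the enumeration step. A naive bound $N_d(A',k)\le\binom{d|A'|}{k}$ is too crude: the resulting estimate for $\P(E_{A'})$ misses $c^{|A'|+1}$ by a factor that is exponential in $d$, so the extra $c$ coming from the combinatorial lemma cannot absorb it and the induction fails to close. The tree-enumeration lemmas repair this by using the forest structure of $A'\cup B$ in $\mathbb{T}_d$ to replace $d$-type growth with a genuinely smaller constant; once those sharp bounds are in hand, the rest of the proof is essentially bookkeeping around the decomposition and the amplification inequality $|B|\ge |A'|+1$.
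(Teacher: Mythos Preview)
Your induction hypothesis $\P(\sigma_A(t)=-1)\le c^{|A|}$ is too coarse, and the bound you assemble cannot close for long clusters. The specific failure is in the claimed shape $\P(E_{A'})\le cK(d)\,(L(d)c)^{|A'|}$ with $L(d)<1$. Take $A'$ to be an odd line of length $m$ in $\mathbb T_3$: the backbone $\{w_0,\dots,w_m\}$ (with $(d-1)^2=4$ choices for the two end vertices) is already a minimal $B$ of size $m+1$, so the induction hypothesis alone forces
\[
\sum_{B\text{ minimal}} c^{|B|}\ \ge\ 4\,c^{\,m+1}.
\]
Hence any bound of the form $cK(L c)^{m}$ needs $K L^{m}\ge 4$ for every $m$, which rules out $L<1$. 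But your final step $cK(d)\bigl[\eps/2+(1-\eps)L(d)c\bigr]^{|A|}\le c^{|A|}$ requires precisely $L(d)<1$ when $|A|\to\infty$. Equivalently, summing over the noise set $S$: the contribution from $|S|=1$ at an interior vertex of a line of length $n$ is of order $n\,\eps\,c^{\,n+1}$ (there are $\sim n$ positions, and each gives two sublines with $|B|\ge n+1$), which exceeds $c^{n}$ once $n\gtrsim 1/(\eps c)$. So the induction does not close.

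This is exactly why the paper's hypothesis is not $c^{|A|}$ but the two-term, cluster-wise product
\[
\prod_{i}\Big[(\eps/2)^{|A_i|}+c_1\eps^{|A_i|+1}c_2^{\Tri(A_i)}c_3^{\Dtri(A_i)}\Big].
\]
The first term carries an extra $2^{-|A_i|}$ which absorbs the combinatorial growth coming from the choice of $C=\Prr(A,B)$ inside $A$ (this is what makes the function $F(r_1,r_2)$ in Lemma~\ref{lem: bound for F tri} bounded), while the trifurcation factors $c_2^{\Tri},c_3^{\Dtri}$ are needed because the enumeration of minimal $B$ grows like $(d-1)^{\chi(A)}$ rather than a pure exponential in $|A'|$. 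Finally, Lemmas~\ref{lem: enumeration induced relation} and~\ref{lem: enumeration lemma final} do not give a bound of the form $N_d(A',k)\le K^{|A'|}L^{k-|A'|}$: they bound $|\frB(A,\mathsf C)|$ and $|\sC(A,r_1,r_2,m_1,m_2)|$, i.e.\ they count minimal $B$'s with a \emph{prescribed} cluster decomposition of $\Prr(A,B)$, and the binomials in Lemma~\ref{lem: enumeration lemma final} are what interact with the $2^{-n}$ from $(\eps/2)^n$ to make the sum finite.
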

Since the noise majority vote process started at all plus dominates the minus biased vote process, Theorem~\ref{thm-main} follows from Theorem~\ref{thm: minus main higher dimension} directly. The next theorem considers general infinite 
trees.
\begin{thm}\label{thm: minus main general tree}
    For any $d\ge 3$ and any infinite tree $\mathbb{T}$ with maximal degree $d$, we have \begin{equation}\label{eq: minus main general tree}
        \P(\sigma_{v}(t)=-1)\le c<0.5,~~~~\mbox{for all }v\in \mathbb{T} 
    \end{equation} for some constant $c>0$ only depending on $d$.
\end{thm}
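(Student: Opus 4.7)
The plan is to reduce Theorem~\ref{thm: minus main general tree} to Theorem~\ref{thm: minus main higher dimension} by a monotone coupling of the minus biased vote process on $\mathbb{T}$ with the one on $\mathbb{T}_d$. Since $\mathbb{T}$ has maximum degree $d$, I would first embed $\mathbb{T}$ as a subgraph of $\mathbb{T}_d$: fix roots $o\in\mathbb{T}$ and $o^{*}\in\mathbb{T}_d$, send $o\mapsto o^{*}$, and inductively map the non-parent neighbors of each already-mapped vertex injectively into the $d-1$ unused neighbors available in $\mathbb{T}_d$ (and all $d$ neighbors at $o^{*}$). After this identification one has $V(\mathbb{T})\subset V(\mathbb{T}_d)$ and $N_{\mathbb{T}}(v)\subset N_{\mathbb{T}_d}(v)$ for every $v\in \mathbb{T}$.

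Next I would put both dynamics on a common probability space. For each $v\in \mathbb{T}_d$ and each $t\ge 1$, introduce independent random variables $(\xi_v(t),\eta_v(t))$ encoding whether the update at $v$ is a vote or a noise update and, in the latter case, the resulting spin. Define $(\sigma_v(t))_{v\in\mathbb{T}_d}$ on $\mathbb{T}_d$ in the usual way from these variables, and define $(\sigma'_v(t))_{v\in\mathbb{T}}$ on $\mathbb{T}$ by using the same variables restricted to $v\in\mathbb{T}$ while counting only $\mathbb{T}$-neighbors in the vote rule. Both processes start from the all-plus configuration.

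The heart of the argument is the claim that $\sigma'_v(t)\ge \sigma_v(t)$ for every $v\in\mathbb{T}$ and every $t\ge 0$, which I would establish by induction on $t$. The base case is immediate. In the inductive step, a noise update at $v$ yields identical spins in both processes; for a vote update, if $\sigma'_v(t)=-1$ then there exist two distinct $u_1,u_2\in N_{\mathbb{T}}(v)$ with $\sigma'_{u_i}(t-1)=-1$, and the inductive hypothesis forces $\sigma_{u_i}(t-1)=-1$; since $N_{\mathbb{T}}(v)\subset N_{\mathbb{T}_d}(v)$, the vertex $v$ has at least two $\mathbb{T}_d$-neighbors in state $-1$, so $\sigma_v(t)=-1$ as well. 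Consequently, for every $v\in\mathbb{T}$,
\[
\P(\sigma'_v(t)=-1)\;\le\;\P(\sigma_v(t)=-1)\;\le\;c,
\]
where $c<1/2$ depends only on $d$, supplied by Theorem~\ref{thm: minus main higher dimension}.

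I do not foresee any serious obstacle here; once Theorem~\ref{thm: minus main higher dimension} is granted, the argument is a routine monotone-coupling step. The only point worth flagging is the direction of monotonicity: because the vote rule triggers precisely when $|\{u\in N(v):\sigma_u=-1\}|\ge 2$, enlarging the neighborhood of $v$ can only make $v$ more likely to flip to $-1$, so the $\mathbb{T}_d$-process stochastically dominates the $\mathbb{T}$-process in the ``more minus'' direction. This is why the constant $c$ from the $d$-regular case transfers verbatim to any infinite tree of maximum degree $d$, with no dependence on the geometry of $\mathbb{T}$ beyond this bound.
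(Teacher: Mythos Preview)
Your proposal is correct and follows essentially the same approach as the paper: the paper simply asserts in one line that the minus biased vote process on $\mathbb{T}$ dominates that on $\mathbb{T}_d$ and invokes Theorem~\ref{thm: minus main higher dimension}, while you supply the standard details (embedding $\mathbb{T}\hookrightarrow\mathbb{T}_d$ and the monotone coupling via shared update randomness) that justify this domination.
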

Since the minus biased vote process on $\mathbb{T}$ dominates the minus biased vote process on $\mathbb{T}_{d}$, we get that Theorem~\ref{thm: minus main general tree} follows from Theorem~\ref{thm: minus main higher dimension} directly.
\section{Proof of Theorem \ref{thm: minus main higher dimension}: the induction procedure}\label{sec: induction procedure}
In this subsection, we will outline the proof of Theorem \ref{thm: minus main higher dimension}, while postponing a number of lemmas for smooth flow of presentation. 
\subsection{Notations}
In this section, we describe the notations used in the following sections.  We use $C, C_i$ to represent positive constants whose actual values may vary from line to line. In addition, we use $c,c_i$ for constants whose values are fixed (throughout the paper) upon their first occurrences.
\begin{defi}
    For any positive integer $k$, we use the notation $[k]$ to denote the set $\{1,\cdots,k\}$.
\end{defi}
\begin{defi}
     Let $\sigma\in \{-1,1\}^{V}$ be a configuration on $V$ and let $A\subset V$ be a vertex set. We denote by $\sigma_A$ the configuration restricted on $A$. Let $\deg_{G}(v)$ be the degree of $v$ in the graph $G$. For any $V'\subset V$, let $H=G|_{V'}$ be the subgraph of $G$ restricted on $V'$.
     For $v\in H$, we call $v$ a \textbf{branching vertex} in $H$ if $\deg_{H}(v)\ge 3$, and we denote by $V(H)$ the vertex set of $H$.
\end{defi}
\begin{defi}
   Recall that $N(x)$ is the collection of all the neighbors of $x$. For a vertex set $A$ we denote $N(A) = \cup_{x\in A} N(x)$ the neighbors of $A$.
    
    For a vertex $x$, we say $y$ is its double neighbor if $\dist(x,y)=2$ (where $\dist$ denotes the graph distance). Specifically, if $x$ is an odd vertex and $\dist(x,y)=2$, then we say $y$ is an odd neighbor of $x$; if $x$ is an even vertex and $\dist(x,y)=2$, then we say $y$ is an even neighbor of $x$.
\end{defi}
\begin{defi}
    For a vertex set $A=\{v_1,v_2,\cdots,v_n\}$ and for $p\geq 3$, we say $x$ is a type $p$ \textbf{single trifurcation} of $A$ if $|N(x)\cap A|=p$. In addition, we say $x$ is a type $p$ \textbf{double trifurcation} of $A$ if there exists  $Y\subset N(x)$ with $|Y|=p$ such that  $N(y)\cap (A\setminus\{x\})\neq\emptyset$ for all $y\in Y$. Let $\Tri(A,p)$ denote the number of type $p$ single  trifurcations and let $\Dtri(A,p)$ denote the number of type $p$ double trifurcations. In addition, let $\Tri(A)=\sum_{p=3}^d(p-2)\Tri(A,p)$ and let $\Dtri(A)=\sum_{p=3}^d(p-2)\Dtri(A,p)$. We say $x$ is a single (resp. double) trifurcation if it is a type $p$ single (resp. double) trifurcation for some $p\geq 3$. See Figure~\ref{fig: trifurcations} for an illustration.
\end{defi}
\begin{figure}[htbp]
    \centering
    \captionsetup[subfigure]{width=4cm}
    \subfloat[A single trifurcation]{\includegraphics[width=1.8cm,height=3.9cm]{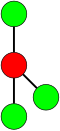}}
    \hspace{5cm}
    \subfloat[A double trifurcation]{\includegraphics[width=2.35cm,height=3.65cm]{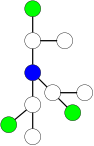}}
    \caption{Illustrations of single and double trifurcations. Red point: a type 3 single trifurcation. Blue point: a type 3 double trifurcation. Green points: vertices in $A$.}
    \label{fig: trifurcations}
\end{figure}

\begin{defi}\label{def: nearby}
    For a collection of odd points $A$ (i.e., an odd vertex set), we say $A$ is an \textbf{odd cluster} if it is connected with respect to the double-neighboring relation. We define the size of an odd cluster, denoted by $|\cdot|$, to be the number of odd vertices in the cluster.
    
    For two vertex sets $A,B\subset V$, let $\dist(A,B)=\min_{x\in A,y\in B}\dist(x,y)$ denote the distance between $A$ and $B$. For two disjoint odd vertex sets $A$ and $B$, we say they are \textbf{adjacent} if $\dist(A, B) = 2$. 
    The same definitions also apply to even vertex sets.
\end{defi}
\begin{rmk}
    We remark that an odd cluster is not necessarily an odd connected component.
\end{rmk}
Throughout this paper, depth-first searches on trees will always start from leaves. In addition, a depth-first search on a forest will be a concatenation of depth-first searches on the sub-trees of the forest.
\begin{defi}\label{def: far neighbor}
Let $H$ be a tree with maximal degree $d$ and fix a depth-first search of $H$. We can get an order for vertices in $H$ according to their first appearance in the depth-first search.
    For any branching vertex $x$ in $H$, let $l$ be the order of $x$. Let $x_-,x_+$ be the vertex with order $l-1$ and $l+1$ respectively, and we call the other neighbors of $x$ the far neighbors of $x$. 
\end{defi}

\subsection{Induction hypothesis via update history}\label{sec: induction hyporthesis}
For any vertex $v\in V$, if $\sigma_v(t)=-1$, then there are two possibilities depending on whether a vote event or a noise event occurs at $v$ at time $t$: in the case of a vote update we say $v$ is a \textbf{real minus} at time $t$, and in the case of a noise event we say $v$ is a \textbf{noise minus} at time $t$. By the definition of the vote event, for any real minus $v$ at time $t$, there exist at least two minuses (i.e., vertices with minus spin values) in $N(v)$ at time $t-1$. In order to compute the probability that $\sigma_v(t)=-1$, we need to trace the update history backward from time $t$ and iteratively record those minuses in the previous time that are responsible for producing the real minuses at the current time.

The update history for a vertex $v$ backward from time $T$ can be represented by a directed acyclic graph with vertices of the form $(u, t)$ for $1\leq t \leq T$. In addition, this directed graph is a minimal (Definition \ref{def: minimal minus at T-1} below echoes the minimality) graph rooted at $(v, T)$ such that the following holds for each $(u, t)$ in this graph: if $u$ is a real minus at time $t$, then there are at least  two incoming edges from vertices of the form $(w, t-1)$ with $w\in N(v)$ such that
$\sigma_w(t-1) = -1$; otherwise the incoming degree of $(u, t)$ is 0. Provided with this, a natural attempt for our proof is to sum over probabilities for all realizations of the update history, which then entails a rather involved counting problem. To address this, we will prove instead by induction, which is natural since the update history is intrinsically defined via induction. A key challenge here is to formulate an induction hypothesis that is suitably strong, as incorporated in \eqref{eq: minus separate product} below.
Since $\mathbb T_d$ is a bipartite graph, the updates of odd points only depend on the values of the even points right before these updates, and vice versa. Hence, for the purpose of induction, it suffices to consider collections of odd points (and similarly collections of even points).
\begin{thm}\label{thm: minus separate product}
    There exist constants $c_1, c_2, c_3>0$ depending only on $d$ such that the following holds. Let $A$ be an arbitrary collection of odd points, and suppose that $A$ is the disjoint union of non-adjacent odd clusters $A_1, \cdots, A_k$ (for some $k\geq 1$). Then, we have \begin{equation}\label{eq: minus separate product}
        \P(\sigma_{A}(t)=-1)\le \prod_{i=1}^k\Big[(\frac{\eps}{2})^{|A_i|}+c_1\eps^{|A_i|+1} c_2^{\Tri(A_i)}c_3^{\Dtri(A_i)}\Big]
    \end{equation} for some constants $c_1,c_2,c_3>1$ depending only on $d$.
\end{thm}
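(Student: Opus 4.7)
The plan is induction on $t$, using the bipartiteness of $\mathbb{T}_d$ so that the induction for odd sets at time $t$ closes by applying the hypothesis to even sets at time $t-1$. The base case $t=0$ holds trivially because the initial configuration is all-plus, making the left-hand side zero for any nonempty $A$. For the inductive step, fix $t\geq 1$, assume the bound at $t-1$, and take $A = A_1\sqcup\cdots\sqcup A_k$. Since the noise and vote events at time $t$ are independent across vertices and of the history, I would first decompose over the update types: let $A'$ be the noise minuses and $A'' = A\setminus A'$ the real (vote) minuses, giving
\begin{equation*}
\P(\sigma_A(t)=-1) = \sum_{A'\subset A}\left(\frac{\epsilon}{2}\right)^{|A'|}(1-\epsilon)^{|A''|}\,\P\big(\forall v\in A'': |N_v^-(t-1)|\geq 2\big).
\end{equation*}

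Next, for each $v\in A''$ I would select a witness pair $T_v\subset N(v)$ of size $2$ consisting of minus even neighbors at time $t-1$, so that a union bound gives
\begin{equation*}
\P\big(\forall v\in A'' : |N_v^-(t-1)|\geq 2\big) \leq \sum_{\{T_v\}_{v\in A''}}\P\big(\sigma_B(t-1)=-1\big),\qquad B:=\bigcup_{v\in A''}T_v.
\end{equation*}
Two structural facts then enter. First, since $\dist(A_i,A_j)\geq 4$ for $i\neq j$, the neighborhoods $N(A_i)$ are pairwise disjoint, so $B$ splits cleanly as $B = \bigsqcup_i B^{(i)}$ with $B^{(i)} = B\cap N(A_i)$. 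Second, writing $A''_i := A''\cap A_i$, the bipartite graph on $A''_i\cup B^{(i)}$ with an edge $v\sim u$ whenever $u\in T_v$ is a subforest of $\mathbb{T}_d$ in which every vertex of $A''_i$ has degree exactly $2$; counting edges per component then gives $|B^{(i)}|\geq |A''_i|+1$ whenever $A''_i\neq\emptyset$. Combined with the noise factor $(\epsilon/2)^{|A'_i|}$ and the leading contribution $(\epsilon/2)^{|B^{(i)}|}$ from the inductive bound on $\sigma_B(t-1)$, this produces $(\epsilon/2)^{|A_i|+1}$ per cluster, exactly the order of the correction term in \eqref{eq: minus separate product}.

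The final and hardest step is the combinatorial bookkeeping. I would apply the induction hypothesis to $\P(\sigma_B(t-1)=-1)$, factored over the even cluster decomposition $\tilde B_1,\ldots,\tilde B_m$ of $B$, and then reorganize the resulting double sum as a product over the original odd clusters $A_i$. The main obstacle is twofold: an even cluster $\tilde B_j$ can straddle two $B^{(i)}$'s precisely when the corresponding $A_i$'s sit at distance $4$, so the cluster decomposition of $B$ is not aligned with that of $A$; moreover, a single $v\in A''_i$ offers up to $\binom{d}{2}$ candidate witness pairs $T_v$, generating combinatorial factors that must be absorbed into $c_2^{\Tri(A_i)}c_3^{\Dtri(A_i)}$. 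This reorganization is exactly what the enumeration Lemmas~\ref{lem: enumeration induced relation} and~\ref{lem: enumeration lemma final} are designed for, bounding the number of compatible $\{T_v\}$-configurations in terms of trifurcation counts of $A_i$. Choosing $c_2,c_3$ sufficiently large depending on $d$, and then $\epsilon$ small, closes the induction and yields \eqref{eq: minus separate product}.
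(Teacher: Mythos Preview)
Your overall architecture---induction on $t$, decomposing over ``noise'' versus ``vote'' minuses, then invoking the hypothesis on an even set $B$ at time $t-1$ with the crucial structural gain $|B|\ge |A''|+1$---is exactly the paper's. But there is a genuine gap in the combinatorial step, and the way you propose to close it does not work as stated.

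The problem is the naive union bound over witness pairs. You note that each $v\in A''$ offers $\binom{d}{2}$ choices of $T_v$, and you claim the resulting factor can be ``absorbed into $c_2^{\Tri(A_i)}c_3^{\Dtri(A_i)}$''. This fails already when $A$ is an odd path: then $\Tri(A)=\Dtri(A)=0$, so the target correction is $c_1\eps^{|A|+1}$ with no exponential slack, yet your union bound produces $\binom{d}{2}^{|A''|}$ terms. The extra $\eps$-powers coming from configurations with larger $|B|$ do help, but organizing this so that the sum collapses to $O(\eps^{|A|+1})$ uniformly in $|A|$ is precisely the delicate part. The paper does \emph{not} do this via a witness-pair union bound; instead it first reduces to \emph{minimal} witness sets $B\in\mathscr B(A,C)$ (Lemma~\ref{lem: minimal minus subset reduction}), so that no overcounting occurs, and then applies the induction hypothesis to the cluster decomposition of such a minimal $B$. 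Lemmas~\ref{lem: enumeration induced relation} and~\ref{lem: enumeration lemma final} are tailored to this setup: they count minimal $B$'s with a prescribed induced cluster sequence $\mathsf C$, and the number of such sequences---not $\{T_v\}$-configurations. A further ingredient you omit is the large/small cluster dichotomy (Section~\ref{sec: single cluster}, around \eqref{eq: 1cluster prob at T-1 with tri step 3}) and the estimate $|B|\ge |C|+r+\xi^*(A,B)$ from Lemma~\ref{lem: leaf trifurcation relation}, which are what make the summation in \eqref{eq: k=1 with tri final} converge.

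Your treatment of $k\ge 2$ is also incomplete. You correctly observe that an even cluster of $B$ can straddle two $A_i$'s at distance $4$, but the enumeration lemmas do nothing about this; the paper handles it through a separate mechanism (Section~\ref{sec: multiple cluster}), introducing the set $W$ of odd vertices sitting between two $A_i$'s (Lemmas~\ref{lem: extra points}--\ref{lem: bound for the exponents in the multi cluster case}), enlarging $A$ to $A\cup W^*$, and carefully tracking how the extra vertices in $W^*$ affect the cluster count, trifurcations, and the exponent of $\eps$ via the identity \eqref{eq: cluster number edge relation}. This is where the factor $c_1$ has to absorb a $d$-dependent constant raised to the power $k$, and it does not reduce to the single-cluster enumeration.
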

Clearly, Theorem \ref{thm: minus main higher dimension} is an immediate corollary of Theorem \ref{thm: minus separate product}. As announced earlier, we will prove \eqref{eq: minus separate product} by induction. Since every minus at time 1 is a noise minus (recalling that our initial configuration is all plus), we have $$\P(\sigma_{A}(1)=-1)= (\frac{\eps}{2})^{|A|},$$verifying \eqref{eq: minus separate product} for $t = 1$. From now on, assume \eqref{eq: minus separate product} holds for $t\le T-1$ and we will consider the case when $t=T$. In what follows, we will first consider the case when $k=1$ in Section \ref{sec: single cluster}, which encapsulates most of the conceptual challenge. Then in Section \ref{sec: multiple cluster} we extend it to the case for $k \geq 2$ by addressing the additional complications.
\subsection{The case of a single minus cluster}\label{sec: single cluster}
In this subsection, we assume that $A$ is a fixed odd cluster with $n$ vertices (at times, we will use the notation $\mathsf{A}$ to denote a general set, whereas the font emphasizes the difference from the fixed set $A$).
Our goal is to bound the probability that $\sigma_v(T) = 1$ for all $v\in A$.
\begin{defi}\label{def: pre-real points}
    Let $B$ be a collection of even vertices. For $0 \leq p\leq d$, define $\Xi_p(A, B)$ to be the collection of $v\in A$ with $p$ neighbors in $B$. 
In addition, we define $\Noi(A, B) = \Xi_0(A, B) \cup \Xi_1(A, B)$ and define $\Prr(A, B) = A \setminus \Noi(A, B)$.
\end{defi}
The purpose of Definition \ref{def: pre-real points} is to facilitate the computation for the probability of updating $A$ to all minuses at time $T$ by averaging over possible configurations on $N(A)$ at time $T-1$. To this end, we have
\begin{align}
    &\P(\sigma_A(T)=-1)\nonumber\\=~&\sum_{B\subset N(A)}\P(\sigma_A(T)=\sigma_B(T-1)=-1,\sigma_{N(A)\setminus B}(T-1)=1)\nonumber\\=~&\sum_{C\subset A}\sum_{\substack{B\subset N(A),\\ \Prr(A,B)=C}}\P(\sigma_A(T)=\sigma_B(T-1)=-1,\sigma_{N(A)\setminus B}(T-1)=1)\nonumber\\=~&\sum_{C\subset A}\sum_{\substack{B\subset N(A),\\ \Prr(A,B)=C}}\P(\sigma_B(T-1)=-1,\sigma_{N(A)\setminus B}(T-1)=1)\cdot (\frac{\eps}{2})^{n-|C|}\cdot(1-\frac{\eps}{2})^{|C|},\label{eq: one cluster minus information graph calculation 1}
\end{align}where the last step follows from our updating rules. For the convenience of analysis later, we denote by $\mathbb P_{C \neq \emptyset}(\sigma_A(T) = -1)$ the right-hand side of \eqref{eq: one cluster minus information graph calculation 1} with the modification that the sum is only over $\emptyset \neq C \subset A$. Then clearly, we have that
\begin{equation*}
\mathbb P(\sigma_A(T) = -1) \leq (\epsilon/2)^n +  \mathbb P_{C \neq \emptyset}(\sigma_A(T) = -1)\,.
\end{equation*}
Note that we managed to reduce the probability at time $T$ to a sum over probabilities at time $T-1$ in \eqref{eq: one cluster minus information graph calculation 1}.  In order to efficiently bound the sum above, for each $C \neq \emptyset$ it would be useful to consider some minimal $B$ such that $\Prr(A, B) = C$, as follows.
\begin{defi}\label{def: minimal minus at T-1}
    Recall that $A$ is a collection of odd vertices. Let $\sB(A)$ denote the collection of $B$ such that there does not exist $B'$ satisfying $\Prr(A,B)=\Prr(A,B')$ and $B'\subsetneq B$. Furthermore, for $C\subset A$, let  $\sB(A,C)$ denote the collection of $B\in \sB(A)$ such that $\Prr(A,B)=C$. 
\end{defi}
\begin{lem}\label{lem: minimal minus subset reduction}
    For $C\subset A$, we have \begin{align}
        &\sum_{\substack{B\subset N(A),\\ \Prr(A,B)=C}}\P(\sigma_B(T-1)=-1,\sigma_{N(A)\setminus B}(T-1)=1)\le \sum_{B\in \sB(A,C)}\P(\sigma_B(T-1)=-1).\label{eq: minimal minus subset reduction}
    \end{align}
\end{lem}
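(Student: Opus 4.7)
The plan is to prove the lemma via a straightforward monotonicity/disjointness argument, with the minor technicality of defining a suitable map from general $B$'s onto $\sB(A,C)$.

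First, I would observe that for any $B \subseteq N(A)$ with $\Prr(A,B) = C$, there exists at least one $B_0 \in \sB(A,C)$ with $B_0 \subseteq B$. This is by a greedy shrinking procedure: starting from $B$, iteratively remove any vertex whose removal preserves the value of $\Prr(A,\cdot)$; since $N(A)$ is finite, the process terminates with a set $B_0 \subseteq B$ that satisfies $\Prr(A,B_0) = \Prr(A,B) = C$ and that is minimal with this property, i.e., $B_0 \in \sB(A,C)$. Using this, I would pick (arbitrarily, e.g., by a fixed tie-breaking rule on subsets of $N(A)$) a map
\begin{equation*}
    \phi: \{B \subseteq N(A) : \Prr(A,B) = C\} \to \sB(A,C), \qquad \phi(B) \subseteq B.
\end{equation*}

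Next, I would exploit two elementary facts about the probability measure on configurations at time $T-1$. Fix any $B_0^* \in \sB(A,C)$. The events $\{\sigma_B(T-1) = -1,\ \sigma_{N(A)\setminus B}(T-1) = 1\}$, indexed by $B \subseteq N(A)$ containing $B_0^*$, are pairwise disjoint (they prescribe different configurations on $N(A)$), and their union is exactly $\{\sigma_{B_0^*}(T-1) = -1\}$ since every configuration on $N(A)$ with $\sigma_{B_0^*} = -1$ determines uniquely the set $B \supseteq B_0^*$ of minus vertices in $N(A)$. Consequently,
\begin{equation*}
    \sum_{B \supseteq B_0^*,\ B \subseteq N(A)} \P\bigl(\sigma_B(T-1) = -1,\ \sigma_{N(A)\setminus B}(T-1) = 1\bigr) = \P\bigl(\sigma_{B_0^*}(T-1) = -1\bigr).
\end{equation*}

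Finally, I would partition the left-hand side of \eqref{eq: minimal minus subset reduction} according to the fiber $\phi^{-1}(B_0^*)$. Since each $B$ in this fiber contains $B_0^*$, the displayed identity above yields
\begin{equation*}
    \sum_{B \in \phi^{-1}(B_0^*)} \P\bigl(\sigma_B(T-1) = -1,\ \sigma_{N(A)\setminus B}(T-1) = 1\bigr) \le \P\bigl(\sigma_{B_0^*}(T-1) = -1\bigr),
\end{equation*}
and summing over $B_0^* \in \sB(A,C)$ gives the claimed inequality.

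There is no real obstacle here; the only thing to be a bit careful about is verifying the existence of $\phi$ (the minimality statement) and articulating cleanly that the events indexed by $B \supseteq B_0^*$ partition $\{\sigma_{B_0^*}(T-1) = -1\}$. The content of the lemma is essentially combinatorial bookkeeping that converts the joint event ``$B$ is exactly the minus set in $N(A)$'' into the weaker event ``$B_0^* \subseteq$ minus set'', at the cost of relaxing the sum to range only over minimal sets.
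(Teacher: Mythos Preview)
Your proof is correct and follows essentially the same approach as the paper. Both arguments hinge on the observation that every $B$ with $\Prr(A,B)=C$ contains some minimal $B_0\in\sB(A,C)$, and then compare the two sums by associating each configuration on $N(A)$ to such a $B_0$; the paper phrases this via an inclusion of configuration sets $\Sigma(A,C)\subset\Sigma'(A,C)$ together with a union bound on the right-hand side, whereas you phrase it via an explicit map $\phi$ and a fiber decomposition, but the content is the same.
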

Combining Lemma~\ref{lem: minimal minus subset reduction} and \eqref{eq: one cluster minus information graph calculation 1}, we get that \begin{align}
    \mathbb P_{C\neq \emptyset}(\sigma_A(T)=-1)\le\sum_{\emptyset \neq C\subset A}\sum_{B\in \sB(A,C)}\P(\sigma_B(T-1)=-1)\cdot (\frac{\eps}{2})^{n-|C|}\cdot(1-\frac{\eps}{2})^{|C|}.\label{eq: one cluster minus information graph calculation final}
\end{align}
We next analyze the probability on the right-hand side of \eqref{eq: one cluster minus information graph calculation final}. Suppose that $B$ is the union of non-adjacent even clusters $B_1, \cdots, B_r$ (i.e., $\{B_i\}_{1\le i\le k}$ is the collection of the even connected components of $B$). Let 
\begin{equation}\label{eq: def of even cluster parameters}
    b_i=|B_i|,~ s_i=\Tri(B_i) \mbox{ and } t_i=\Dtri(B_i).
\end{equation}
Applying the induction hypothesis \eqref{eq: minus separate product}, we get that \begin{equation}\label{eq: 1cluster prob at T-1 with tri step 1} \begin{aligned}
    \P(\sigma_B(T-1)=-1)\le\prod_{i=1}^r\Big[(\frac{\eps}{2})^{b_i}+c_1\eps^{b_i+1} c_2^{s_i}c_3^{t_i}\Big].
\end{aligned}
\end{equation} In order to analyze the clusters in $B\in \sB(A,C)$, we consider the structure of $\Prr(A,B)$, in the next few lemmas. \begin{defi}\label{def: induced cluster}
    Let $C$ be a collection of odd vertices and let $D=\{x\notin C: |N(x)\cap C|\ge 2\}$ be the collection of even points with at least two neighbors in $C$. We say $D$ is induced by $C$.
    We can similarly define the version with even and odd switched. See Figure~\ref{fig: induced cluster} for an illustration.
\end{defi}
\begin{figure}[ht]
    \centering
    \includegraphics[width=0.5\linewidth]{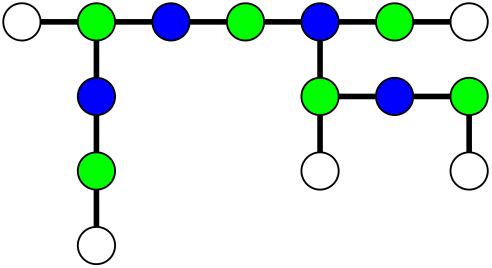}
    \caption{An illustration for induced relation. Green points: the odd cluster $C$. Blue points: the even cluster $D$ induced by $C$.}
    \label{fig: induced cluster}
\end{figure}
\begin{lem}\label{lem: induced cluster}
    Suppose that $C$ is an odd cluster inducing $D$. Then $D$ is connected according to the double-neighboring relation and thus $D$ is an even cluster. By symmetry, the statement holds with odd and even switched.
\end{lem}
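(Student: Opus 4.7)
The plan is to show that for any two vertices $x, y \in D$, one can exhibit a sequence inside $D$ connecting them in which consecutive elements are at tree-distance exactly $2$. My strategy is to transport a double-neighbor path in $C$ (which exists since $C$ is an odd cluster) into a double-neighbor path in $D$, using the tree structure of $\mathbb{T}_d$ to locate the intermediate even vertices.

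The key observation is the following: if $c, c' \in C$ are odd vertices with $\dist(c, c') = 2$, then on the tree there is a unique even vertex $d$ with $c, c' \in N(d)$. Since $d$ has (at least) two neighbors lying in $C$, we have $d \in D$ directly by Definition \ref{def: induced cluster}. Thus every double-neighbor edge in $C$ gives a canonical even vertex in $D$ that witnesses it.

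To build the path from $x$ to $y$, I would first choose $u \in N(x) \cap C$ and $w \in N(y) \cap C$, which exist because $|N(x) \cap C|, |N(y) \cap C| \ge 2$ by membership in $D$. Connectedness of $C$ under the double-neighboring relation provides a sequence $u = c_0, c_1, \ldots, c_k = w$ with $c_i \in C$ and $\dist(c_i, c_{i+1}) = 2$ for each $i$. For each $0 \le i \le k-1$, let $d_i$ be the unique even vertex adjacent to both $c_i$ and $c_{i+1}$; by the key observation, $d_i \in D$. I claim the sequence $x, d_0, d_1, \ldots, d_{k-1}, y$, after deleting any coincidences, is the desired path in $D$. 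Indeed, $x$ and $d_0$ share $c_0 = u$ as a common neighbor, each consecutive pair $d_i, d_{i+1}$ shares $c_{i+1}$ as a common neighbor, and $d_{k-1}, y$ share $c_k = w$ as a common neighbor; on a tree, any two distinct vertices with a common neighbor are at distance exactly $2$.

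The only delicate points are the degenerate case $k = 0$ (where $u = w$, in which case $x$ and $y$ are either equal or are both neighbors of the same odd vertex and hence double neighbors) and the fact that successive $d_i$'s may coincide or may coincide with $x$ or $y$, which is harmless since one simply removes duplicates from the sequence. I do not expect a genuine obstacle here; the argument is essentially a bookkeeping verification that the tree structure forces the witnessing even vertices into $D$ and keeps them close. The symmetric statement with odd and even switched is obtained by applying the identical argument with the roles of the two bipartition classes exchanged.
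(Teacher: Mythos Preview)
Your proof is correct and follows essentially the same approach as the paper: both arguments take a double-neighbor path $u=c_0,c_1,\ldots,c_k=w$ in $C$ between chosen neighbors of $x$ and $y$, and then observe that the unique even midpoints $d_i$ between $c_i$ and $c_{i+1}$ lie in $D$ and form an even double-neighbor path. The only cosmetic difference is in how the endpoints are attached: the paper selects $x'\in N(x)\cap C$ and $y'\in N(y)\cap C$ to \emph{maximize} distance to the opposite endpoint, which forces $x$ and $y$ themselves to appear as the terminal midpoints of the constructed even path, whereas you simply append $x$ and $y$ to the sequence $d_0,\ldots,d_{k-1}$ and note they are double-neighbors of (or equal to) $d_0$ and $d_{k-1}$. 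Your endpoint handling is arguably more transparent and avoids the maximality consideration.
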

\begin{lem}\label{lem: minimality}
    Let $\mathsf{A}$ be an odd cluster. Then for $B\in \mathscr B(\mathsf{A})$ which induces $C$, we have $ C\subset \mathsf{A}$.
\end{lem}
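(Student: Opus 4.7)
The plan is to argue by contradiction, combining the minimality of $B$ with the tree structure of $\mathbb{T}_d$. The first preliminary observation is that minimality forces $B \subset N(\mathsf{A})$: any $b \in B$ with no $\mathsf{A}$-neighbor could be deleted without changing any count $|N(a) \cap B|$ for $a \in \mathsf{A}$, hence without changing $\Prr(\mathsf{A}, B)$, contradicting $B \in \mathscr{B}(\mathsf{A})$. This step is routine but essential for setting up the geometry.

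Suppose for contradiction that some $v_0 \in C \setminus \mathsf{A}$ exists. Since $v_0 \in C$, one can pick two distinct $B$-neighbors $u_1, u_2$ of $v_0$. By the previous step, each $u_i$ has some neighbor $a_i \in \mathsf{A}$. The tree structure forces $a_1 \neq a_2$ (otherwise $a_1 - u_1 - v_0 - u_2 - a_1$ would be a $4$-cycle in $\mathbb{T}_d$), and moreover the unique path in $\mathbb{T}_d$ from $a_1$ to $a_2$ must be exactly $a_1 - u_1 - v_0 - u_2 - a_2$, which passes through $v_0$.

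To close the argument, I would invoke the odd-cluster hypothesis on $\mathsf{A}$: it provides a double-neighbor chain $a_1 = a^{(0)}, a^{(1)}, \ldots, a^{(k)} = a_2$ inside $\mathsf{A}$, which lifts to a walk in $\mathbb{T}_d$ whose visited odd vertices all lie in $\mathsf{A}$ and whose intermediate even vertices are the common neighbors of consecutive $a^{(j-1)}, a^{(j)}$. Since $v_0 \notin \mathsf{A}$ and $v_0$ is odd, this walk avoids $v_0$ entirely. But any walk between $a_1$ and $a_2$ in a tree must traverse every vertex on the unique tree path joining them, and we already showed that path contains $v_0$, a contradiction. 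The main subtlety, and the step most worth getting right, is this final parity check: recognizing that a double-neighbor path inside $\mathsf{A}$ simply cannot sneak past an omitted odd vertex in the bipartite tree, which is precisely what lets the odd-cluster connectivity clash with the tree-path obstruction at $v_0$.
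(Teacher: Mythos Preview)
Your proof is correct. The paper's argument uses the same tree-separation idea but is organized in the contrapositive direction: rather than first deducing $B\subset N(\mathsf A)$ and then showing that $v_0$ would separate two points $a_1,a_2\in\mathsf A$, the paper removes $v_0$ from the tree, notes that the odd cluster $\mathsf A$ lies entirely in one component of $G|_{V\setminus\{v_0\}}$, and concludes that the $B$-neighbor of $v_0$ in the other component can be deleted without changing $\Prr(\mathsf A,B)$, contradicting minimality directly. So both arguments exploit the fact that an odd cluster not containing $v_0$ cannot straddle the separation at $v_0$; you invoke minimality at the start (to get $u_i\in N(\mathsf A)$) and contradict connectivity at the end, while the paper invokes connectivity first and contradicts minimality at the end. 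The paper's route is slightly shorter since it avoids the preliminary step $B\subset N(\mathsf A)$, but your version makes the role of the odd-cluster hypothesis more explicit.
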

\begin{lem}\label{lem: leaf trifurcation relation}
    Let $\mathsf{A}$ be an odd cluster. Let $B$ be the union of $r$ non-adjacent even clusters. Let $\xi^*(\mathsf{A},B):=\sum_{p=3}^d (p-2) |\Xi_p(\mathsf{A}, B)|$. Then, we have
    \begin{equation}\label{eq: leaf trifurcation relation}
        \xi^*(\mathsf{A},B)+|\Prr(\mathsf{A},B)|\le |B|-r.
    \end{equation}
\end{lem}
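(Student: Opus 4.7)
The plan is to recast the inequality as a counting statement about edges in a bipartite subforest of $\mathbb{T}_d$, and then exploit both the tree structure of $\mathbb{T}_d$ and the non-adjacency assumption on the clusters. First, since $\Prr(\mathsf{A},B) = \bigcup_{p=2}^d \Xi_p(\mathsf{A},B)$, the left-hand side simplifies to
\begin{equation*}
\xi^*(\mathsf{A},B)+|\Prr(\mathsf{A},B)| \;=\; \sum_{p=2}^d (p-1)\,|\Xi_p(\mathsf{A},B)|,
\end{equation*}
so the target becomes $\sum_{p=2}^d (p-1)|\Xi_p(\mathsf{A},B)| \le |B|-r$.

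Next, I would introduce the auxiliary bipartite graph $H$ whose vertex set is $V(H)=B\cup\Prr(\mathsf{A},B)$ and whose edges are all $\mathbb{T}_d$-edges between these two parts. Each vertex $a \in \Xi_p(\mathsf{A},B)$ contributes exactly $p$ edges to $H$, so $|E(H)| = \sum_{p=2}^d p\,|\Xi_p|$ and $|V(H)| = |B| + \sum_{p=2}^d |\Xi_p|$. Since $H$ is a subgraph of the tree $\mathbb{T}_d$ it is a forest, hence $|E(H)| = |V(H)| - c(H)$ where $c(H)$ denotes the number of connected components of $H$. Rearranging these identities gives $\sum_{p=2}^d (p-1)|\Xi_p| = |B| - c(H)$, so the lemma reduces to the bound $c(H) \ge r$.

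The main obstacle, and where both hypotheses (the even cluster structure and pairwise non-adjacency) actually enter, is establishing $c(H)\ge r$. I would argue it via the following two observations. Every connected component $K$ of $H$ must contain at least one vertex of $B$: indeed, any $a \in \Prr(\mathsf{A},B)$ has degree $\ge 2$ in $H$ by the definition of $\Prr$, so a component cannot consist only of $\Prr$-vertices. Moreover, for any component $K$, all vertices of $K\cap B$ belong to a single even cluster $B_i$: two vertices $b,b' \in K\cap B$ are joined by a path in $H$ that alternates between $B$ and $\Prr(\mathsf{A},B)$, and each pair of consecutive $B$-vertices along the path shares a common $\Prr$-neighbor, hence lies at $\mathbb{T}_d$-distance exactly $2$, i.e., they are double-neighbors. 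The non-adjacency assumption forces double-neighbors lying in $B$ to belong to the same even cluster, so iterating along the path places $b$ and $b'$ in the same $B_i$. Finally, since each $B_i$ is nonempty and contained in $V(H)$, the assignment sending a component $K$ to the unique index $i$ with $K\cap B\subset B_i$ is well-defined and surjective onto $\{1,\dots,r\}$, giving $c(H)\ge r$ as required.
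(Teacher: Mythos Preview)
Your proof is correct and follows the same overall structure as the paper's: both introduce the subgraph on $B\cup\Prr(\mathsf{A},B)$, establish the inequality $\xi^*+|\Prr|\le |B|-c$ where $c$ is the number of connected components of this subgraph, and then argue $c\ge r$ from the non-adjacency of the even clusters. The difference lies only in how the middle inequality is obtained. The paper proceeds by induction on $\xi^*$: in the base case the graph is a path, and in the inductive step one deletes an edge at a branching odd vertex, splitting the graph into two pieces to which the hypothesis applies. You instead appeal directly to Euler's formula for forests, $|E(H)|=|V(H)|-c(H)$, and read off the identity $\sum_{p\ge 2}(p-1)|\Xi_p|=|B|-c(H)$ from the bipartite degree count. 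Your route is shorter and in fact gives an equality rather than an inequality at this step; the paper's inductive argument is slightly more hands-on but arrives at the same point. The final observation---that a connected component of $H$ meets only one $B_i$, hence $c(H)\ge r$---is identical in both proofs.
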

Recall that $B_1, \cdots, B_r$ are the even connected components of $B$ and recall \eqref{eq: def of even cluster parameters}.  By Lemma \ref{lem: induced cluster}, we can let $C_i$ be the odd cluster induced by $B_i$. It is clear that $C_i$'s are disjoint. Let $C$ be the set of odd vertices induced by $B$. Since $B_i$'s are non-adjacent, we see that $\cup_{i=1}^rC_i=C$. Thus, by Definition \ref{def: pre-real points} and Lemma~\ref{lem: minimality}, we get that $\cup_{i=1}^r C_i=\Prr(A,B)$. Let $m=|\Prr(A,B)|$, and let $a_i$ be the size of the odd cluster $C_i$. Then $\sum_{i=1}^ra_i=m$. Applying Lemma~\ref{lem: leaf trifurcation relation}, we get that $\sum_{i=1}^r b_i\ge m+\xi^*(A,B)+r$ and $b_i\ge a_i+1$ (the latter can be deduced from Lemma 2.14 with $A = C_i$ and $B = B_i$). Then we get that \begin{equation}
    \prod_{i=1}^r\Big[(\frac{\eps}{2})^{b_i}+c_1\eps^{b_i+1} c_2^{s_i}c_3^{t_i}\Big]\le\eps^{m+r+\xi^*(A,B)}\times\prod_{i=1}^r\Big[(\frac{1}{2})^{a_i+1}+c_1\eps c_2^{s_i}c_3^{t_i}\Big].\label{eq: 1cluster prob at T-1 with tri}
\end{equation}
If $x$ is a type $p$ single trifurcation of $B$ for some $p\geq 3$, we have that $x$ is in the set induced by $B$. Thus, by Lemma~\ref{lem: minimality} we get that $x\in A$ and $x\in \Xi_p(A,B)$, leading to \begin{equation}\label{eq: trifurcation relation 1}
    \begin{aligned}
        \sum_{i=1}^rs_i\le \xi^*(A,B).
    \end{aligned}
\end{equation}
For a type $p~(p\ge 3)$ double trifurcation point $x$ of $B$, there exist $y_1, \cdots, y_p \in N(x)$ such that  there exists $z_i \in B~(1\le i\le p)$ for some $z_i\in N(y_i)\setminus\{x\}$. We claim that $x$ has to be a type $p'$ single trifurcation of $A$ for some $p' \geq p$. (See Figure~\ref{fig: double tri for B} for an illustration.) Otherwise, we have $|N(x) \cap A| \leq p-1$ and without loss of generality we assume $y_1\notin A$. We consider the graph $\hat{G}=G|_{V\setminus\{y_1\}}$. Note that $G$ is a tree and thus $z_1$ is not connected to $z_2$ in $\hat{G}.$
Let $G_1, G_2$ be the two connected components of the graph $\hat{G}$ containing $z_1$ and $z_2$ respectively. Since $y_1$ is an odd vertex and $G$ is a tree, we get that the odd vertices in $G_1$ are not connected to the odd vertices in $G_2$ according to the double-neighboring relation. Since $z_1\in B\cap V(G_1)$ and $z_2\in B\cap V(G_2)$, we get from the minimality of $B$ (recall that $B\in \mathscr B(A, C)$) that $A\cap V(G_1)\neq \emptyset$ and $A\cap V(G_2)\neq \emptyset$. This contradicts the fact that $A$ is an odd cluster (since $G_1$ is not connected to $G_2$ via a double-neighboring relation) and thus completes the proof of our claim. Therefore we get that \begin{equation}\label{eq: trifurcation relation 2}
    \begin{aligned}
        \sum_{i=1}^rt_i\le \Tri(A).
    \end{aligned}
\end{equation}
\begin{figure}[ht]
    \centering
   \subfloat[A double trifurcation of $B$ is a single trifurcation of $A$.]{ \includegraphics[width=0.4\linewidth]{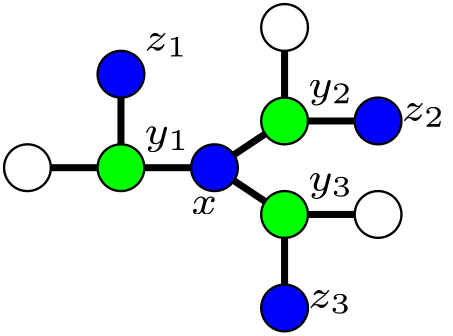}}\hspace{0.15\linewidth}
   \subfloat[There exist two even neighbors of a double trifurcation of $B$ which are disconnected in $A\cup B$.]{ \includegraphics[width=0.4\linewidth]{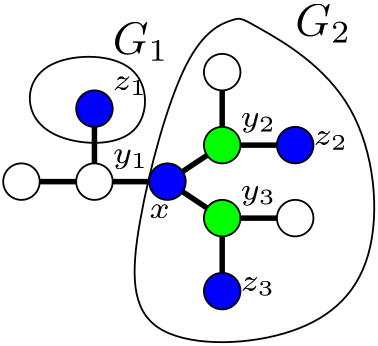}}
    \caption{An illustration of double trifurcations in $B$. Green points: a subset of $A$. Blue points: the even cluster $B$ induced by $A$.}
    \label{fig: double tri for B}
\end{figure}
Note that $c_2,c_3>1$. We get from  \eqref{eq: trifurcation relation 1} and \eqref{eq: trifurcation relation 2} that \begin{align}
    \prod_{i=1}^r\Big[(\frac{1}{2})^{a_i+1}+c_1\eps c_2^{s_i}c_3^{t_i}\Big]&\le c_2^{\sum_{i=1}^rs_i}\cdot c_3^{\sum_{i=1}^rt_i}\cdot\prod_{i=1}^r\Big[(\frac{1}{2})^{a_i+1}+c_1\eps \Big]\nonumber\\&\le c_2^{\xi^*(A,B)}\cdot c_3^{\Tri(A)}\cdot\prod_{i=1}^r\Big[(\frac{1}{2})^{a_i+1}+c_1\eps \Big].\label{eq: 1cluster prob at T-1 with tri step 2}
\end{align}

In order to control the right-hand side of \eqref{eq: 1cluster prob at T-1 with tri step 2} more efficiently, it would be useful to compare $2^{-a_i-1}$ against $c_1 \epsilon$. To this end, for $1\leq i\leq r$, we say a cluster $C_i$ is large if $a_i > -\log_2(c_1\epsilon)-1$, and we say $C_i$ is small otherwise.
Let $r_1, r_2$ denote the number of large and small clusters respectively. Let $a_{1,1},\cdots,a_{1,r_1}$ be the sizes of large clusters and let $m_1=\sum_{i=1}^{r_1}a_{1,i}$; let $a_{2,1},\cdots,a_{2,r_2}$ be the sizes of small clusters and let $m_2=\sum_{i=1}^{r_2}a_{2,i}$. Thus we have the following bound: \begin{align}
   \prod_{i=1}^r\Big[(\frac{1}{2})^{a_i+1}+c_1\eps\Big]&\le \prod_{i=1}^{r_1}2c_1\eps\times\prod_{i=1}^{r_2}(\frac{1}{2})^{a_{2,i}}=2^{r_1-m_2}\times(c_1\eps)^{r_1}.\label{eq: 1cluster prob at T-1 with tri step 3}
\end{align}
Combining \eqref{eq: 1cluster prob at T-1 with tri step 1}, \eqref{eq: 1cluster prob at T-1 with tri}, \eqref{eq: 1cluster prob at T-1 with tri step 2} and \eqref{eq: 1cluster prob at T-1 with tri step 3}  , we derive that\begin{align}
     \P(\sigma_B(T-1)=-1)\le 2^{r_1-m_2}\eps^{m_1+m_2+2r_1+r_2+\xi^*(A,B)}\times c_1^{r_1}\times c_2^{\xi^*(A,B)}\times c_3^{\Tri(A)}.\label{eq: cluster type inequality 1}
\end{align}
Let $\eps>0$ be small enough such that $c_2\eps <1$. Then we get from \eqref{eq: cluster type inequality 1} that \begin{align}
    \P(\sigma_B(T-1)=-1)\le2^{r_1-m_2}\eps^{m_1+m_2+2r_1+r_2}\times c_1^{r_1}\times  c_3^{\Tri(A)}.\label{eq: cluster type inequality}
\end{align}
Since the right-hand side of \eqref{eq: cluster type inequality} depends on $B$ only through $r_1, r_2, m_1$ and $ m_2$, this hugely reduces the complexity of the enumeration problem suggested by \eqref{eq: one cluster minus information graph calculation final}: now we only care about some global features of $\Prr(A, B)$, but do not care about information on the individual cluster $C_i$ in $\Prr(A, B)$.
\subsection{Enumeration of $B$}\label{sec: enumeration overview}
In this subsection, we enumerate the set $B$, for all categories as suggested in \eqref{eq: cluster type inequality}. As in Section~\ref{sec: single cluster}, we fix $A$ to be an odd cluster with $n$ vertices. For each $i$, recall that $C_i$ is the odd cluster induced by the even cluster $B_i$.
\begin{defi}\label{def: G_A}
    For an odd cluster $\mathsf A$, let $B$ be the even cluster induced by $\mathsf A$. Let $G_{\mathsf A}$ be the graph restricted on $\mathsf A\cup  B$, i.e., $G_{\mathsf A}=G|_{\mathsf A\cup B}$. 
\end{defi}
We fix a depth-first search of $G_A$ starting from a leaf vertex of $A$, yielding an order of vertices in $G_A$  according to their first appearance in this search. As a default convention, the order of a set is given by the maximal order over all vertices in this set.
\begin{defi}\label{def: cluster sequence with total sum C}
For $r_1, r_2, m_1, m_2 \geq 0$, we denote by 
$\sC(A,r_1,r_2,m_1,m_2)$ the collection of sequences of disjoint odd clusters $(C_1,\cdots,C_{r_1+r_2})$ satisfying the following properties.\begin{enumerate}
    \item For any $1\le i\le r_1+r_2$, we have $C_i\subset A$ and $C_1, \cdots, C_{r_1+r_2}$ are arranged increasingly according to their order (i.e., the maximal order over all vertices in $C_i$).
    \item There are $r_1$ large clusters with total size  $m_1$ and there are $r_2$ small clusters with total size  $m_2$.
\end{enumerate}
\end{defi}
\begin{defi}\label{def: cluster sequence with total sum B}
     For any sequence of disjoint odd clusters $\mathsf{C}=(C_1,\cdots,C_{r})$ with $\cup_{i=1}^r C_i \subset A$, let $\frB(A,\mathsf{C})$ denote the collection of $B\in\sB(A)$ such that $B$ is a disjoint union of non-adjacent even clusters $B_1, \cdots, B_r$ and that $C_i$ is induced by $B_i$ for all $1\leq i\leq r$.
\end{defi}
\begin{rmk}
    Note that Definitions \ref{def: minimal minus at T-1}, \ref{def: cluster sequence with total sum C} and \ref{def: cluster sequence with total sum B} naturally extend to the case when $A$ is a collection of odd vertices, as we will use in Subsection \ref{sec: multiple cluster}.
\end{rmk}
\begin{rmk}
    For a sequence of disjoint odd clusters $\mathsf{C}=(C_1,\cdots,C_{r})$, we emphasize that $\frB(A,\mathsf{C})$ is not equivalent to $\sB(A,\cup_{i=1}^rC_i)$  in Definition~\ref{def: minimal minus at T-1} because we have posed additional restrictions on the clustering structure of $B$ for $B\in\frB(A,\mathsf{C})$. 
\end{rmk}
Note that $$\bigcup\limits_{\emptyset\neq C\subset A}\sB(A,C)=\bigcup\limits_{\substack{r_1+r_2\ge 1,\\m_1,m_2\ge 0}}~\bigcup\limits_{\mathsf{C}\in \sC(A,r_1,r_2,m_1,m_2)}\frB(A,\mathsf{C}).$$
Combining \eqref{eq: cluster type inequality} with \eqref{eq: one cluster minus information graph calculation final}, we get that $\mathbb P_{C\neq \emptyset}(\sigma_A(T) = -1)$ can be upper-bounded by\begin{align}
    \sum_{\substack{r_1+r_2\ge 1,\\m_1,m_2\ge 0}}\sum_{\mathsf{C}\in \sC(A,r_1,r_2,m_1,m_2)}\sum_{B\in \frB(A,\mathsf{C})}2^{-n+m_1+r_1}\cdot(1-\frac{\eps}{2})^{m_1+m_2}\cdot\eps^{n+2r_1+r_2}\cdot c_1^{r_1}\cdot c_3^{\Tri(A)}.\label{eq: cluster type inequality final}
\end{align} By \eqref{eq: cluster type inequality final}, it suffices to get a uniform upper bound on $|\frB(A,\mathsf{C})|$ for all $\mathsf{C}\in \sC(A,r_1,r_2,m_1,m_2)$ and an upper bound for $|\sC(A,r_1,r_2,m_1,m_2)|$, as incorporated in the next two lemmas. For notational convenience, we write $\chi = \chi(A) = \Tri(A) + \Dtri(A)$ in what follows.

\begin{lem}\label{lem: enumeration induced relation}
For any  sequence $\mathsf{C}=(C_1,\cdots,C_{r_1+r_2})\in \sC(A,r_1,r_2,m_1,m_2)$, we have \begin{equation}\label{eq: enumeration induced relation}
    |\frB(A,\mathsf{C})|\le (d-1)^{2r_1+2r_2+\chi}.
\end{equation}   
\end{lem}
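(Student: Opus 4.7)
The plan is to reduce the enumeration of each $B_i$ to a local choice at each ``endpoint'' of $C_i$, then combine the per-cluster bounds using the global trifurcation structure of $A$. Fix $\mathsf{C} = (C_1, \ldots, C_{r_1+r_2})$ and a candidate $B = B_1 \sqcup \cdots \sqcup B_{r_1+r_2} \in \frB(A, \mathsf{C})$. For each $i$, let $D_i$ be the even cluster induced by $C_i$, and write $\chi_i := \Tri(C_i) + \Dtri(C_i)$.

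I first show $D_i \subset B_i$. Whenever $c, c' \in C_i$ are double-neighbors, their unique shared even midpoint $y \in D_i$ must lie in $B_i$: otherwise $c$ and $c'$ would sit in disjoint components of $\mathbb{T}_d \setminus \{y\}$, and no double-neighboring chain inside $B_i$ (whose consecutive pairs must share an odd vertex in $C_i$) could cross between these components. Writing $B_i = D_i \sqcup E_i$, each $e \in E_i$ has $|N(e) \cap C_i| = 1$ (otherwise $e \in D_i$), and the non-adjacency of the $B_j$'s forces $N(e) \cap C_j = \emptyset$ for $j \neq i$. Let $c(e)$ denote the unique $C_i$-neighbor of $e$.

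Next I invoke the minimality of $B$, which (using again non-adjacency of the $B_j$'s) makes each $B_i$ minimal among even clusters inducing $C_i$. For each $e \in E_i$, removal of $e$ must drop some vertex of $C$ from $\Prr(A, B)$, and the only candidate is $c(e)$; hence $|N(c(e)) \cap B_i| = 2$ and therefore $|N(c(e)) \cap D_i| \leq 1$. When $|C_i| \geq 2$, this forces $|N(c(e)) \cap D_i| = 1$, so $c(e)$ is a leaf of the tree $G|_{C_i \cup D_i}$, and precisely one $E_i$-vertex is attached to each such leaf, chosen from its $d-1$ non-$D_i$ neighbors. The standard tree identity $\#\{\text{leaves}\} = 2 + \sum_{v:\deg(v) \geq 3}(\deg(v)-2)$ applied to $G|_{C_i \cup D_i}$, together with the identifications of branching $D_i$-vertices with single trifurcations of $C_i$ and branching $C_i$-vertices with double trifurcations of $C_i$, yields exactly $2 + \chi_i$ leaves. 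In the singleton case $|C_i|=1$, minimality forces $|B_i|=2$ giving $\binom{d}{2} \leq (d-1)^2 = (d-1)^{2+\chi_i}$ choices. Combining, $|\frB(A, \mathsf{C})| \leq \prod_i (d-1)^{2+\chi_i}$.

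It remains to prove $\sum_i \chi_i \leq \chi(A)$. The bound $\sum_i \Tri(C_i) \leq \Tri(A)$ is elementary from the disjointness $C_i \subset A$: at each even vertex $o$, writing $p_i := |N(o) \cap C_i|$ and $P := |N(o) \cap A|$, we have $\sum_i p_i \leq P$, so $\sum_{i : p_i \geq 3}(p_i - 2) \leq (P-2)_+$. The main obstacle is $\sum_i \Dtri(C_i) \leq \Dtri(A)$, which can fail pointwise unless we exploit the tree structure. The key observation is that if $x$ is a type-$p \geq 3$ double trifurcation of $C_i$, then $x$ admits $\geq 3$ distinct witnesses $w_1, w_2, w_3 \in C_i$ in three different components of $\mathbb{T}_d \setminus \{x\}$; any double-neighboring chain inside $C_i$ connecting two of them must visit $x$ itself (the unique vertex bridging the components), forcing $x \in C_i$. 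Since the $C_i$'s are disjoint, such an $x$ belongs to at most one $C_i$, and using the inclusion $D_i \subset \{y : |N(y) \cap A| \geq 2\}$ we get a pointwise bound on the total contribution of $x$ to $\sum_i \Dtri(C_i)$ by its contribution to $\Dtri(A)$. Summing over $x$ finishes the proof.
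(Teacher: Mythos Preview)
Your proof is correct and follows essentially the same strategy as the paper: establish $D_i \subset B_i$, reduce the enumeration of each $B_i$ to one free choice per leaf of $G|_{C_i \cup D_i}$, and bound the total leaf count by $2(r_1+r_2)+\chi(A)$. The only organizational difference is that the paper routes the final summation through the intermediate quantity $\xi^*(G_A)$ (the branching count of the global tree $G_A$), whereas you bound $\sum_i \Tri(C_i)\le \Tri(A)$ and $\sum_i \Dtri(C_i)\le \Dtri(A)$ directly---the latter via the observation that any double trifurcation of $C_i$ must itself lie in $C_i$; your explicit handling of the singleton case $|C_i|=1$ is also slightly more careful than the paper's treatment.
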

\begin{lem}\label{lem: enumeration lemma final}
Recall that $|A| = n$. We have that $|\sC(A,r_1,r_2,m_1,m_2)|$ is upper-bounded by (below we use the convention that $\binom{a}{b} = 1_{a = b}$ if $b<0$)
     \begin{equation*}\label{eq: enumeration lemma}
     \begin{aligned}
         & 2^{(d+5)\chi}\cdot2^{r_1+r_2+1}\cdot\binom{m_1-1}{r_1-1}\cdot\binom{m_2-1}{r_2-1}\cdot\binom{r_1+r_2}{r_1}\cdot\binom{n-m_1-m_2+r_1+r_2+\chi}{r_1+r_2+\chi} .
     \end{aligned}
\end{equation*}
\end{lem}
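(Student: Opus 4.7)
The plan is to split the enumeration into two stages: first choose the multiset of cluster sizes and the large/small pattern, then, given the sizes, count placements of the clusters inside $A$ consistent with the DFS order. For the first stage, writing $m_1$ as an ordered sum of $r_1$ positive integers (the sizes of the large clusters in left-to-right order) contributes $\binom{m_1-1}{r_1-1}$; likewise $\binom{m_2-1}{r_2-1}$ for the small ones; interleaving large and small positions in the resulting sequence of $r_1+r_2$ slots contributes $\binom{r_1+r_2}{r_1}$. This yields the first three binomial factors in the claimed bound.

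For the second stage, I would work inside the tree $G_A$ of Definition~\ref{def: G_A}, whose branching vertices are precisely the single and double trifurcations of $A$ and whose total ``excess'' degree is $\chi=\chi(A)=\Tri(A)+\Dtri(A)$. A structural fact I would establish first, by uniqueness of paths in the tree $G_A$, is that for any odd cluster $C\subset A$ the minimal subtree of $G_A$ spanning $C$ has its odd vertices equal exactly to $C$; call this subtree $\widehat T(C)$. Hence the sequence $(C_1,\dots,C_{r_1+r_2})$ is determined by the collection of subtrees $\widehat T(C_i)$. Now as the DFS on $G_A$ visits the odd vertices $v_1,\dots,v_n$ of $A$ in order, each $C_i$ appears as a disjoint union of consecutive ``runs,'' and a new run of $C_i$ is created only when the DFS enters a fresh branch of some branching vertex $v\in\widehat T(C_i)$ from which $\widehat T(C_i)$ extends in $\ge 2$ directions; a careful global accounting then bounds the total number of runs over all clusters by $r_1+r_2+\chi$.

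Given this, I encode a placement in two pieces. At each branching vertex $v$ of $G_A$ of degree $p\ge 3$ (contributing $p-2$ to $\chi$), record a ``local signature'' specifying, for each of the $p$ incident branches, whether the branch carries a portion of some $C_i$ and how those portions glue to the unique cluster (if any) that contains $v$; crudely bounding the number of signatures at $v$ by $2^{(d+5)(p-2)}$ and using $p\le d$ produces the cumulative factor $2^{(d+5)\chi}$. Once all signatures are fixed, the remaining freedom is in the lengths of the $\le r_1+r_2+\chi$ runs (already constrained in total by the chosen cluster sizes) and in the positions of the $n-m_1-m_2$ unused vertices, which distribute among at most $r_1+r_2+\chi+1$ gap slots; this is a stars-and-bars count bounded by $\binom{n-m_1-m_2+r_1+r_2+\chi}{r_1+r_2+\chi}$, and the residual factor $2^{r_1+r_2+1}$ absorbs endpoint effects together with a ``starts-a-new-cluster-or-continues'' binary indicator per cluster.

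The main obstacle, and where the delicate work lies, is designing the local signature at each branching vertex so that (i) the sequence $(C_1,\dots,C_{r_1+r_2})$ can be reconstructed uniquely from the signatures together with the linear run/gap data, and (ii) the number of signatures at a degree-$p$ vertex is bounded by $2^{O(d)(p-2)}$. Single trifurcations (where $v$ is even and belongs to no $C_i$) and double trifurcations (where $v$ is odd and belongs to at most one $C_i$) require slightly different bookkeeping, and the generous constant $d+5$ is chosen loose enough to let a single uniform bound cover both cases.
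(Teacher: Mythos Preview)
Your Stage~1 decomposition (cluster sizes and large/small interleaving, giving $\binom{m_1-1}{r_1-1}\binom{m_2-1}{r_2-1}\binom{r_1+r_2}{r_1}$) matches the paper exactly. Stage~2 is in the right spirit, but the central claim---that local signatures at branching vertices together with gap data and the cluster-size list uniquely determine $(C_1,\dots,C_{r_1+r_2})$---is asserted rather than proved, and this is where the paper does essentially all the technical work. The difficulty your signature scheme does not address: when several runs lie in the same branch below a branching vertex $v$, your signature at $v$ says that the cluster through $v$ extends into that branch, but not \emph{which} of those runs is its continuation. The paper resolves this by recording, for each branching pair $(x,y)$, an ordinal count (the number of white components between $x$ and $y$ with order larger than the one containing $x$; their map $\psi_5$), and then proves by a delicate induction on $|A|$ that this ordinal data together with the colors at branching vertices and their neighbors ($\psi_4$) forces the white-component sizes once the cluster-size list is fixed (their Proposition~3.8). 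Your purely local signatures do not carry this ordinal information, so the reconstruction claim is unjustified as stated.

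Operationally, the paper first straightens $G_A$ to a line $\tilde G_A$ via a leaf-cutting procedure, encodes a configuration by five pieces $(\psi_1,\psi_2,\psi_3,\psi_4,\psi_5)$, proves this map injective, and then bounds each image set: the pair $(\psi_1,\psi_2)$ (black capacities at removed leaves and on the line) gives your stars-and-bars binomial $\binom{n-m_1-m_2+r_1+r_2+\chi}{r_1+r_2+\chi}$, while $(\psi_4,\psi_5)$ yields the factors $2^{(d+5)\chi}\cdot 2^{r_1+r_2+1}$ via the crude bound $\binom{r_1+r_2+4\chi+1}{2\chi+1}\le 2^{r_1+r_2+4\chi+1}$. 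So the $2^{r_1+r_2+1}$ is not an endpoint fudge or a ``new-cluster-or-continues'' indicator as you suggest, but the cost of enumerating the ordinal data in $\psi_5$.
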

\begin{rmk}
    We emphasize that although the definition of $\sC$ depends on the choice of the constant $c_1$ (to separate large and small clusters), the bounds in Lemmas~\ref{lem: enumeration induced relation} and \ref{lem: enumeration lemma final} do not depend on $c_1$.
\end{rmk}
Combining \eqref{eq: one cluster minus information graph calculation final}, \eqref{eq: cluster type inequality final} with Lemmas~\ref{lem: enumeration induced relation} and \ref{lem: enumeration lemma final}, we get that\begin{align}
    &\P_{C\neq \emptyset}(\sigma_A(T)=-1)\nonumber\\
    \le~&\sum_{r_1+r_2\ge 1}\sum_{m_1,m_2\ge 0}\binom{m_1-1}{r_1-1}\cdot\binom{m_2-1}{r_2-1}\cdot\binom{r_1+r_2}{r_1}\cdot2^{-n+m_1+2r_1+r_2+1}\cdot(d-1)^{2r_1+2r_2}\nonumber\\
    &\cdot\binom{n-m_1-m_2+r_1+r_2+\chi}{r_1+r_2+\chi}\cdot \big((d-1)2^{(d+5)}\big)^{\chi}\cdot\eps^{n+2r_1+r_2}\cdot(1-\frac{\eps}{2})^{m_1+m_2}c_1^{r_1}c_3^{\Tri(A)}\nonumber\\
    =~& \sum_{r_1+r_2\ge 1} F(r_1,r_2)\cdot c_1^{r_1}\cdot \binom{r_1+r_2}{r_1}\cdot 2^{2r_1+r_2+1}\cdot(d-1)^{2r_1+2r_2}\cdot \big((d-1)2^{(d+5)}\big)^{\chi}\nonumber\\&\cdot\eps^{n+2r_1+r_2}\cdot c_3^{\Tri(A)}.\label{eq: k=1 with tri final}
\end{align}
Here  $F(r_1, r_2) = F(n, \chi; r_1, r_2)$ denotes the following sum:\begin{align}
    &\sum_{m_1,m_2\ge 0}\binom{n-m_1-m_2+r_1+r_2+\chi}{r_1+r_2+\chi}\cdot\binom{m_1-1}{r_1-1}\cdot\binom{m_2-1}{r_2-1}\cdot 2^{-n+m_1}\cdot(1-\frac{\eps}{2})^{m_1+m_2}.\label{eq: def of F}
\end{align}

\begin{lem}\label{lem: bound for F tri}
    For any $r_1,r_2\ge 0$, we have\begin{equation*}
        F(r_1,r_2)\le 2^{2r_1+2r_2+\chi}\eps^{-r_1}.
    \end{equation*}
    Furthermore, we have the following refined bounds when $r_1 + r_2 = 1$:
    \begin{align*}
        F(1,0)\le 2^{\chi+2},~~\text{and}~~
        F(0,1)\le 2^{\chi+1}.
    \end{align*}
\end{lem}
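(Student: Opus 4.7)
The plan is a direct computation on the series in~\eqref{eq: def of F}. Write $a = r_1+r_2+\chi$ and $y = 1-\eps/2$. I would first apply the crude bound $\binom{n-m_1-m_2+a}{a}\le 2^{n-m_1-m_2+a}$, which decouples the $m_1$ and $m_2$ summations: the combined factor $\binom{n-m_1-m_2+a}{a}\cdot 2^{-n+m_1}y^{m_1+m_2}$ is then at most $2^{a}\cdot y^{m_1}\cdot(y/2)^{m_2}$, so the double sum factors as
\begin{align*}
F(r_1,r_2)\;\le\;2^a\cdot S_1(r_1)\cdot S_2(r_2),\qquad
S_i(r_i) \;:=\; \sum_{m_i\ge r_i}\binom{m_i-1}{r_i-1}z_i^{m_i},
\end{align*}
with $z_1=y$ and $z_2=y/2$.

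Using the standard identity $\sum_{m\ge r}\binom{m-1}{r-1}z^m = z^r/(1-z)^r$ together with $1-y = \eps/2$ and $1-y/2\ge 1/2\ge y/2$, I would obtain $S_1(r_1)\le (2/\eps)^{r_1}$ and $S_2(r_2)\le 1$, with both expressions interpreted as $1$ when the relevant index is $0$ per the convention $\binom{-1}{-1}=1$. Multiplying these out yields $F(r_1,r_2)\le 2^{a+r_1}\eps^{-r_1} = 2^{2r_1+r_2+\chi}\eps^{-r_1}$, which is already stronger than the claimed general bound $2^{2r_1+2r_2+\chi}\eps^{-r_1}$. The refined bound $F(0,1)\le 2^{\chi+1}$ falls out of the same estimate, since setting $r_1=0$ removes the $\eps^{-1}$ factor altogether.

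The main obstacle is the refined bound $F(1,0)\le 2^{\chi+2}$, because the general argument only delivers $F(1,0)\le 2^{\chi+2}/\eps$, off by a factor of $\eps^{-1}$. To handle this case I would use that $r_2=0$ forces $m_2=0$ via the $\binom{-1}{-1}=1$ convention, so
\begin{align*}
F(1,0) \;=\; \sum_{m_1\ge 1}\binom{n-m_1+1+\chi}{1+\chi}\,2^{-n+m_1}\,y^{m_1}.
\end{align*}
Reindexing by $k=n-m_1$ converts $2^{-n+m_1}$ into the genuinely decaying geometric weight $2^{-k}$, after which I would discard the harmless factor $y^{n-k}\le 1$ and apply the closed form
\begin{align*}
\sum_{k\ge 0}\binom{k+1+\chi}{1+\chi}\,2^{-k} \;=\; \frac{1}{(1-1/2)^{\chi+2}} \;=\; 2^{\chi+2}.
\end{align*}
The conceptual point is that in the general argument the factor $\eps^{-r_1}$ arises from the near-divergence of $\sum y^{m_1}$ when $y$ is close to $1$; for $r_1=1$ this is wasteful because the binomial sum in $k$ already converges on its own through $2^{-k}$, and bounding $y\le 1$ directly is sharper than summing a geometric series in $y$.
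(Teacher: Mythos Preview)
Your proposal is correct and follows essentially the same route as the paper: bound the leading binomial by $2^{n-m_1-m_2+r_1+r_2+\chi}$ to decouple the sums, evaluate the resulting negative-binomial series, and for $F(1,0)$ reindex via $k=n-m_1$ so that the geometric weight $2^{-k}$ does the work instead of $(1-\eps/2)^{m_1}$. One small difference: your estimate $S_2(r_2)\le 1$ yields the sharper general bound $F(r_1,r_2)\le 2^{2r_1+r_2+\chi}\eps^{-r_1}$, so the refined bound $F(0,1)\le 2^{\chi+1}$ drops out immediately, whereas the paper carries an extra $2^{r_2}$ and then handles $F(0,1)$ separately via the hockey-stick identity $\sum_{m=0}^{n}\binom{m+a}{a}=\binom{n+a+1}{a+1}$.
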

Now we are ready to prove the case of $k=1$ for \eqref{eq: minus separate product}.
\begin{proof}[Proof of Theorem~\ref{thm: minus separate product}: $k=1$]
    Combining \eqref{eq: k=1 with tri final} and Lemma~\ref{lem: bound for F tri} and the fact that $\binom{r_1+r_2}{r_1}\le 2^{r_1+r_2}$, we get that
\begin{align}
    &\P(\sigma_A(T)=-1)\nonumber\\\le~& (\frac{\eps}{2})^n+ \big((d-1)2^{(d+6)}\big)^{\chi}\cdot c_3^{\Tri(A)}\times\big[32(d-1)^2c_1\eps^{n+2}+8(d-1)^2\eps^{n+1}\nonumber\\& +\sum_{r_1+r_2\ge2}(d-1)^{2r_1+2r_2}\cdot2^{5r_1+4r_2+1}\cdot\eps^{n+r_1+r_2}\big]\nonumber\\=~&(\frac{\eps}{2})^n+ \big((d-1)2^{(d+6)}\big)^{\chi}\cdot c_3^{\Tri(A)}\times\big[32(d-1)^2c_1\eps^{n+2}+8(d-1)^2\eps^{n+1}\nonumber\\&+\frac{7\cdot 2^{9}(d-1)^4\eps^{n+2}-3\cdot 2^{14}(d-1)^6\eps^{n+3}}{(1-32(d-1)^2\eps)(1-16(d-1)^2\eps)}\big].\label{eq: k=1 final calculation with tri}
\end{align} Letting $c_3\ge (d-1)2^{(d+6)}, c_2\ge (d-1)2^{(d+6)}c_3,c_1> 8(d-1)^2$ and $\eps$ small enough (depending only on $c_1, c_2, c_3$ and $d$) completes the proof for the case $k=1$.
\end{proof}

\subsection{The case of multiple minus clusters}\label{sec: multiple cluster}
In this subsection, we consider \eqref{eq: minus separate product} when $k\geq 2$. Suppose that $A$ is the disjoint union of non-adjacent odd clusters $A_1, \cdots, A_k$ (throughout this subsection). Recall the convention on the order for an odd cluster as described preceding to Definition \ref{def: cluster sequence with total sum C}. In this subsection, we fix a depth-first search order which is a concatenation of the order on $G_{A_1}, \cdots, G_{A_k}$. In order to avoid degeneracy, we define $\mathcal J$ to be the collection of $1\leq j\leq k$ such that every $v_j\in A_j$ is a noise minus at time $T$. The key of our proof is to prove the following: \begin{equation}\label{eq: minus separate product for suvivers}
        \mathbb P(\sigma_A(T) = -1; \mathcal J = \emptyset)\le \prod_{i=1}^k\Big[c_1\eps^{|A_i|+1} c_2^{\Tri(A_i)}c_3^{\Dtri(A_i)}\Big].
    \end{equation}
\begin{proof}[Proof of Theorem~\ref{thm: minus separate product} assuming \eqref{eq: minus separate product for suvivers}]
    For any $I\subset [k]$, we define $\cA(I)$ to be the event that $\mathcal J \cap I = \emptyset$ and $\sigma_{A_i}(T) = -1$ for all $i\in I$. By independence of noise updates, we get that \begin{align}
    \P(\sigma_{A}(T)=-1)&=\sum_{I\subset[k]}\mathbb P(\sigma_A(T) = -1; \mathcal J = [k]\setminus I)\nonumber\\&~\le \sum_{I\subset[k]}\P(\cA(I))\cdot (\frac{\eps}{2})^{\sum_{j\notin I}|A_j|}\nonumber\\&\stackrel{\eqref{eq: minus separate product for suvivers}}{\le}\sum_{I\subset[k]}\prod_{i\in I}\Big[c_1\eps^{|A_i|+1} c_2^{\Tri(A_i)}c_3^{\Dtri(A_i)}\Big]\cdot (\frac{\eps}{2})^{\sum_{j\notin I}|A_j|}\nonumber\\&~=\prod_{i=1}^k\Big[(\frac{\eps}{2})^{|A_i|}+c_1\eps^{|A_i|+1} c_2^{\Tri(A_i)}c_3^{\Dtri(A_i)}\Big].
\end{align} Thus we complete the proof of Theorem~\ref{thm: minus separate product}.
\end{proof} 
It remains to prove \eqref{eq: minus separate product for suvivers}, for which a natural attempt is to repeat our proof for the case of $k=1$. However, a more careful analysis is required due to the possibility for two doubly neighboring minus vertices at time $T-1$ to interact with different clusters in $A$ at time $T$.

For $B\subset N(A)$, we continue to let $C$ be the collection of vertices induced by $B$. Instead of having $C \subset A$ when $k=1$ (recall Lemma \ref{lem: minimality}), we need to analyze $C\setminus A$ when $k > 1$, as in the next lemma. 
\begin{lem}\label{lem: extra points}
   Let $B\subset N(A)$ be a collection of even vertices and let $C$ be the collection of vertices induced by $B$. Then for any $x\in C\setminus A$, there exist $1\le i<j\le k$ such that $\dist(A_i,x)=\dist(A_j,x)=2$.
\end{lem}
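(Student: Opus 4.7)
The plan is to argue by contradiction. Suppose $x\in C\setminus A$. By Definition~\ref{def: induced cluster} there exist two distinct $y_1,y_2\in N(x)\cap B$, and since $B\subset N(A)$ each $y_s$ has a neighbor $a_s\in A$ for $s=1,2$. Note $x$ is odd (since $y_s$ is even and $x\in N(y_s)$), and $x\notin A$ forces $x\neq a_s$; as the distance between two odd vertices is even and at most $2$, this gives $\dist(x,a_s)=2$ for $s=1,2$. It therefore suffices to show that $a_1$ and $a_2$ belong to different clusters $A_i$.

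Suppose for contradiction that $a_1,a_2\in A_i$ for some common $i$. I would first check that $a_1,y_1,x,y_2,a_2$ is an honest path in $\mathbb T_d$: odd/even alternation handles adjacent pairs, the assumption $x\notin A$ gives $x\neq a_1,a_2$, and $a_1=a_2$ would force $y_1,y_2$ to be two distinct common neighbors of $a_1$ and $x$, which is impossible in a tree. Hence this length-$4$ walk is the unique tree path from $a_1$ to $a_2$, and its unique interior odd vertex is precisely $x$.

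The heart of the argument is the following tree observation: if two odd vertices $u,v$ are connected in a set $S$ via the double-neighboring relation through a sequence $u=u_0,u_1,\ldots,u_m=v$ with all $u_j\in S$, then every odd vertex on the unique tree path from $u$ to $v$ lies in $S$. Indeed, each consecutive pair $u_j,u_{j+1}$ shares an even common neighbor $e_j$, so the concatenation yields a walk $u_0,e_0,u_1,e_1,\ldots,e_{m-1},u_m$ from $u$ to $v$ in $\mathbb T_d$. Every walk in a tree passes through every vertex of the unique path between its endpoints (removing any such interior vertex disconnects $u$ from $v$), and since the odd vertices appearing in this walk are exactly $u_0,\ldots,u_m$, every odd vertex on the tree path belongs to $\{u_0,\ldots,u_m\}\subset S$.

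Applying this observation with $u=a_1$, $v=a_2$, $S=A_i$ (which is double-connected by the definition of an odd cluster in Definition~\ref{def: nearby}) forces the interior odd vertex $x$ of the path $a_1-y_1-x-y_2-a_2$ to lie in $A_i\subset A$, contradicting $x\notin A$. Therefore $a_1\in A_i$ and $a_2\in A_j$ for some $i\neq j$, and combined with $\dist(x,a_s)=2$ this yields $\dist(A_i,x)=\dist(A_j,x)=2$ as desired. I expect the only substantive step to be the tree-walk observation; everything else is parity bookkeeping and unwinding of the cluster/induced-set definitions.
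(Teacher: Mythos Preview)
Your proof is correct and follows the same approach as the paper: take $y_1,y_2\in N(x)\cap B$, use $B\subset N(A)$ to find $a_s\in A\cap N(y_s)$, and argue that $a_1,a_2$ must lie in distinct clusters $A_i,A_j$. The paper asserts this last step in a single line (and somewhat confusingly writes ``by the minimality of $B$'' where the hypothesis $B\subset N(A)$ is what is actually used), whereas you spell out the tree-walk justification explicitly.
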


In light of Lemma \ref{lem: extra points}, we define $W = W(A)$ by \begin{equation}\label{eq: def of extra points}
    W=\{x:\exists 1\le i<j\le k\mbox{ such that } \dist(A_i,x)=\dist(A_j,x)=2\}.
\end{equation}
\begin{lem}\label{lem: number of extra points}
We have $|W|\le k-1.$ 
\end{lem}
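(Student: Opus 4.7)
The plan is to introduce a bipartite graph $H$ on $\{A_1,\ldots,A_k\}\sqcup W$ with an edge $A_i\sim w$ whenever $\dist(A_i,w)=2$. By the definition of $W$ every $w\in W$ has $\deg_H(w)\ge 2$, so counting edges from the $W$-side gives $|E(H)|\ge 2|W|$. If $H$ is a forest, then $|E(H)|\le k+|W|-1$, and combining the two bounds yields $|W|\le k-1$ as required.

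The main step is to prove $H$ contains no cycle. Suppose for contradiction there is a cycle $A_{i_1}\sim w_1\sim A_{i_2}\sim w_2\sim\cdots\sim A_{i_l}\sim w_l\sim A_{i_1}$ with distinct $w_s$'s. For each edge of the cycle I pick a witness: an even vertex $e_s$ (resp.\ $e_s'$) adjacent to both $w_s$ and some $u_s\in A_{i_s}$ (resp.\ $u_s'\in A_{i_{s+1}}$). The non-adjacency of consecutive clusters forces $e_s\neq e_s'$.

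To leverage the tree structure, let $T_i$ denote the minimal subtree of $\mathbb{T}_d$ spanning $A_i$, set $F=\bigcup_{i=1}^k T_i$, and contract each connected component of the subforest $F$ to a single super-vertex, producing a tree $\widetilde{\mathbb{T}}_d$. The key preliminary fact is that no $w_s$ lies in $F$: if $w_s\in T_{i'}$, then $w_s$ sits on a tree path between two vertices $u,u'\in A_{i'}$, and because $A_{i'}$ is connected under the double-neighbor relation there exists a walk $u=v_0\to z_1\to v_1\to\cdots\to v_r=u'$ in $\mathbb{T}_d$ with all $v_j\in A_{i'}$; in a tree any walk from $u$ to $u'$ must pass through $w_s$, forcing $w_s=v_j\in A_{i'}$ for some $j$, contradicting $w_s\in V\setminus A$.

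Once $w_s\notin F$ is established, an easy check shows $e_s$ and $e_s'$ cannot lie in the same component of $F$: a path between them inside $F$ combined with the edges $\{w_s,e_s\}$ and $\{w_s,e_s'\}$ would form a cycle in $\mathbb{T}_d$. Hence in $\widetilde{\mathbb{T}}_d$ each $w_s$ keeps two distinct spoke-neighbors, and the $H$-cycle translates into a closed walk in the tree $\widetilde{\mathbb{T}}_d$ that visits each (distinct) $w_s$ exactly once, so each of the two spoke-edges incident to $w_s$ is traversed exactly once. This contradicts the fundamental fact that any closed walk in a tree uses every edge an even number of times, so $H$ must be a forest. The main obstacle in the plan is the preliminary fact $w_s\notin F$, which requires translating odd-cluster connectivity into a property of tree walks; once that is in hand, the tree-walk parity argument wraps things up.
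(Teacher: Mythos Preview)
Your proof is correct but takes a somewhat more elaborate route than the paper's. Both arguments ultimately show that an auxiliary graph is a forest by exploiting the tree structure of $\mathbb{T}_d$, but the packaging differs.

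The paper builds a simple graph $H$ on the index set $[k]$: for each $w\in W$ it records the two \emph{smallest} indices $i<j$ with $\dist(A_i,w)=\dist(A_j,w)=2$ and declares $\{i,j\}$ an edge. The key observation, used both to see this map $W\to E(H)$ is injective and to rule out cycles, is that removing any $w\in W$ from $\mathbb{T}_d$ must separate $A_i$ from $A_j$ (otherwise the unique neighbor of $w$ on the $A_i$-side would be adjacent to both clusters, forcing $\dist(A_i,A_j)\le 2$). A cycle $i_1,\dots,i_s$ in $H$ with distinct witnesses $x_1,\dots,x_s$ then yields an immediate contradiction: deleting $x_s$ separates $A_{i_1}$ from $A_{i_s}$, yet they remain joined through the chain $x_1,\dots,x_{s-1}$. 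Hence $|W|\le |E(H)|\le k-1$.

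Your bipartite graph on $\{A_1,\dots,A_k\}\sqcup W$ together with the degree--edge count is a clean alternative bookkeeping device; it has the virtue of not needing the injectivity step. The acyclicity argument you give---contracting the spanning subtrees $T_i$ and invoking closed-walk parity---is valid, and the lemma $w_s\notin F$ is exactly the right ingredient. That said, the contraction/parity machinery is heavier than necessary: once you know $w_s\notin F$ and that removing $w_s$ leaves its two spoke-neighbors in different components of $\mathbb{T}_d\setminus\{w_s\}$, the paper's one-line separation argument already gives the contradiction without ever forming $\widetilde{\mathbb{T}}_d$. Your approach would generalize better if the ambient graph were not a tree (where the parity argument still bites), whereas the paper's separation trick is tailored to trees but gets there faster.
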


For a vertex set $B$, we define $\phi(B)$ to be the collection of vertices $w\in W$ such that $w$ has at least two double neighbors in $B$. Thus, we get that \begin{equation*}
    \begin{aligned}
        &\P(\sigma_{A}(T)=-1\mid \sigma_{B}(T-1)=-1,\sigma_{N(A)\setminus B}(T-1)=1)\\=&\P(\sigma_{A
        \cup \phi(B)}(T)=-1\mid \sigma_{B}(T-1)=-1,\sigma_{N(A)\setminus B}(T-1)=1)\cdot (1-\frac{\eps}{2})^{-|\phi(B)|}.
    \end{aligned}
\end{equation*} Therefore, we obtain that\begin{equation}
    \begin{aligned}\label{eq: prime reduction 1}
        &\P(\sigma_{A}(T)=\sigma_{B}(T-1)=-1,\sigma_{N(A)\setminus B}(T-1)=1)\\=&\P(\sigma_{A\cup\phi(B)}(T)=\sigma_{B}(T-1)=-1,\sigma_{N(A)\setminus B}(T-1)=1)\cdot (1-\frac{\eps}{2})^{-|\phi(B)|}.
    \end{aligned}
\end{equation}
\begin{defi}\label{def: multi cluster connect}
    Let $\sB^*$ denote the collection of sets $B\subset N(A)$ such that $\Prr(A_i,B)\neq\emptyset$ for any $1\le i\le k$. Recall the definition of $\sB(A)$ in Definition~\ref{def: minimal minus at T-1}. For $W^*\subset W$, let $\sB_{W^*}$ denote the collection of sets $B\in \sB(A)\cap\sB^*$ such that $\phi(B)=W^*$.
\end{defi}
Denote by $\mathcal C$ the collection of $C\subset A$ such that $C\cap A_i \neq \emptyset$ for all $1\leq i\leq k$. Recall Definition \ref{def: minimal minus at T-1} (and note that it can naturally be extended to the case when $k > 1$). Combining \eqref{eq: prime reduction 1} with Lemma~\ref{lem: extra points} , we may reproduce our computation as in \eqref{eq: one cluster minus information graph calculation 1}. Indeed, we can get that  \begin{align}
    &\P(\sigma_A(T)=-1;\mathcal J = \emptyset)\nonumber\\\le~&\sum_{W^*\subset W}\sum_{\substack{B\in\sB^*,\\\phi(B)=W^*}}\P(\sigma_A(T)=\sigma_B(T-1)=-1,\sigma_{N(A)\setminus B}(T-1)=1)\nonumber\\\le~&\sum_{W^*\subset W}\sum_{C \in \mathcal C}\sum_{\substack{\Prr(A \cup W,B)=C\cup W^*,\\ B\in\sB^*}}\P(\sigma_{A}(T)=\sigma_B(T-1)=-1,\sigma_{N(A)\setminus B}(T-1)=1) \label{eq: multi cluster calculation 1}
\end{align}
where in the last inequality we applied Lemma \ref{lem: extra points} since it implies that any vertex outside $W$ must be in $A$ if it has at least two neighbors in $B$. Combining \eqref{eq: prime reduction 1} with \eqref{eq: multi cluster calculation 1}, we derive that
\begin{align*}
    &\P(\sigma_A(T)=-1;\mathcal J = \emptyset)\nonumber\\\le~&\sum_{W^*\subset W}(1-\frac{\eps}{2})^{-|W^*|}\cdot\sum_{C \in \mathcal C}\sum_{\substack{\Prr(A \cup W,B)=C\cup W^*,\\ B\in\sB^*}}\nonumber\\&\P(\sigma_{A\cup W^*}(T)=\sigma_B(T-1)=-1,\sigma_{N(A)\setminus B}(T-1)=1)\\=~&\sum_{W^*\subset W}(1-\frac{\eps}{2})^{-|W^*|}\cdot\sum_{C \in \mathcal C}\sum_{\substack{\Prr(A \cup W,B)=C\cup W^*,\\ B\in\sB^*}}\nonumber\\&\P(\sigma_B(T-1)=-1,\sigma_{N(A)\setminus B}(T-1)=1)\cdot (\frac{\eps}{2})^{n-|C|}\cdot(1-\frac{\eps}{2})^{|C|+|W^*|}.\nonumber
\end{align*}

Combined with Lemma~\ref{lem: minimal minus subset reduction}, it yields that \begin{align}
    &\P(\sigma_A(T)=-1;\mathcal J = \emptyset)\le\sum_{W^*\subset W}(1-\frac{\eps}{2})^{-|W^*|}\times\nonumber\\&\sum_{C \in \mathcal C}\sum_{B\in \sB(A \cup W,C\cup W^*)}\P(\sigma_B(T-1)=-1)\cdot (\frac{\eps}{2})^{n-|C|}\cdot(1-\frac{\eps}{2})^{|C|+|W^*|}.\label{eq: prime reduction}
\end{align}
In the right-hand side of \eqref{eq: prime reduction}, we decide not to cancel the term $(1-\frac{\eps}{2})^{-|W^*|}$ for further convenience. Now, in order to bound the sum over $C$ in \eqref{eq: prime reduction} efficiently, we introduce the following definition.
\begin{defi}\label{def: multi cluster connect 1}
    Let $\mathsf{A}$ be an odd cluster and for any $p\ge 3$ and let $S(p)\subset \mathsf{A}$ satisfy that for any $ x\in S(p)$, $x$ is a type $p$ double trifurcation of $\mathsf{A}$.
    In addition, let $\fS=(S(3),\cdots,S(d))$ and $\kappa(\fS)=\sum_{p=3}^d(p-2)|S(p)|$. We use the notation $\sC(\mathsf{A},r_1,r_2,m_1,m_2,\fS)$ to denote the collection of disjoint odd cluster sequences of the form $(C_1,\cdots,C_{r_1+r_2})$ such that the following hold:\begin{enumerate}
        \item $(C_1,\cdots,C_{r_1+r_2})\in \sC(\mathsf{A},r_1,r_2,m_1,m_2)$.
        \item For any $3\le p\le d$, $S(p)\subset\cup_{i=1}^{r_1+r_2}C_i$.
        \item Let $\mathsf{A}_1,\cdots,\mathsf{A}_q$ be the odd connected components in $\mathsf{A}\setminus \cup_{p=3}^dS(p)$. Then we have $\cup_{j=1}^{r_1+r_2}C_j\cap \mathsf{A}_i\neq \emptyset$ for any $1\le i\le q$.
    \end{enumerate}
\end{defi}
The next lemma enables us to use the induction hypothesis for the even clusters in $B$.
\begin{lem}\label{lem: multiple cluster cannot merge}
    For $W^*\subset W$, suppose that $A \cup W^*$ is a disjoint union of non-adjacent odd clusters $A^*_1, \cdots, A^*_{k^*}$.
  For any $B\in \sB_{W^*}$, $B$ can be written as a disjoint union of non-empty even vertex sets $B_1^*, \cdots, B^*_{k^*}$ such that the following hold. \begin{enumerate}
    \item For any $1\le i\le k^*$, we have $B_i^*\in \sB(A_i^*)$.
    \item $B^*_i$ and $B^*_j$ are not adjacent for all $1 \leq i < j \leq k^*$.
    \item For each $1\leq i\leq k^*$, $B^*_i$ induces an odd subset in $A^*_i$.
    \end{enumerate} 
\end{lem}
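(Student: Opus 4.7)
The plan is to partition $B$ by assigning each $b \in B$ to the unique cluster $A_i^*$ in which it has a neighbor. Specifically, define $B_i^* = \{b \in B : N(b) \cap A_i^* \neq \emptyset\}$ for $1 \leq i \leq k^*$. The crucial step is to verify that this really partitions $B$. By minimality of $B \in \sB(A)$, every $b \in B$ must be a neighbor of some $v \in \Prr(A, B) \subset A \subset A \cup W^*$ (since otherwise removing $b$ would preserve $\Prr(A,B)$), so $b$ lies in at least one $B_i^*$. For uniqueness, suppose $b$ had neighbors $a \in A_i^*$ and $a' \in A_j^*$ with $i \neq j$; on a tree the path through $b$ gives $\dist(a, a') = 2$, i.e., $a$ and $a'$ are double neighbors, contradicting the fact that $A_i^*$ and $A_j^*$ are disjoint non-adjacent odd clusters. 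Hence $B = B_1^* \cup \cdots \cup B_{k^*}^*$ disjointly.

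Non-emptiness of each $B_i^*$ uses $B \in \sB^*$: each $A_i^*$ contains at least one original cluster $A_j$ (since vertices of $W^*$ only merge some of the $A_j$'s via double-neighbor adjacency), and any $v \in \Prr(A_j, B)$ has at least two $B$-neighbors, which must lie in $B_i^*$ by the uniqueness above. For non-adjacency of $B_i^*$ and $B_j^*$ (property 2), suppose $b \in B_i^*$ and $b' \in B_j^*$ shared a common neighbor $v$. Then $v$ is odd with $|N(v) \cap B| \geq 2$; by Lemma~\ref{lem: extra points}, $v \in A \cup W$, and if $v \in W$ the condition $|N(v) \cap B| \geq 2$ forces $v \in \phi(B) = W^*$. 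So $v \in A \cup W^*$ lies in a unique $A_\ell^*$, and the uniqueness of the assignment forces $\ell = i = j$, a contradiction.

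For property 1, that $B_i^* \in \sB(A_i^*)$, I would again invoke minimality of $B$: for each $b \in B_i^*$ there is $v \in \Prr(A, B) \subset A$ with $b \in N(v)$ and $|N(v) \cap B| = 2$; the uniqueness places $v$ in $A_i^*$ and both of the $B$-neighbors of $v$ in $B_i^*$, so removing $b$ strictly shrinks $\Prr(A_i^*, B_i^*)$. For property 3, any odd vertex $v$ with $|N(v) \cap B_i^*| \geq 2$ satisfies $|N(v) \cap B| \geq 2$, hence lies in the odd set induced by $B$, which equals $\Prr(A, B) \cup W^* \subset A \cup W^*$; applying uniqueness to any neighbor of $v$ in $B_i^*$ pins $v$ to $A_i^*$. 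The main obstacle is the very first step: extracting from the tree structure the geometric fact that two non-adjacent odd clusters cannot share a common even neighbor. Once that is in hand, the unique-assignment it provides makes all three properties immediate consequences of routine bookkeeping.
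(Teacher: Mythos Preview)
Your proof is correct. The partition you define, $B_i^* = \{b \in B : N(b) \cap A_i^* \neq \emptyset\}$, coincides with the paper's, but the paper arrives at it differently: it first decomposes $B$ into its non-adjacent even clusters $B_1,\ldots,B_s$, shows (via Lemma~\ref{lem: extra points} and $\phi(B)=W^*$) that the odd cluster $C_\ell$ induced by each $B_\ell$ lies in a single $A_j^*$, and then sets $B_j^* = \bigcup_{C_\ell \subset A_j^*} B_\ell$. Your vertex-by-vertex assignment is more direct and avoids the intermediate cluster decomposition; on the other hand, the paper's route makes Property~2 immediate (distinct even clusters of $B$ are non-adjacent by definition), whereas you have to re-derive it by pushing the common neighbor $v$ into $A\cup W^*$ using Lemma~\ref{lem: extra points} together with $\phi(B)=W^*$. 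For Property~1 the paper argues by contradiction that a strictly smaller $B_i'$ with the same $\Prr(A_i^*,\cdot)$ would yield a strictly smaller $B$ with the same $\Prr(A,\cdot)$; your argument that deleting any single $b\in B_i^*$ drops some $v\in \Prr(A,B)$ with $|N(v)\cap B|=2$ from $\Prr(A_i^*,B_i^*)$ is equivalent, since $\Prr$ is monotone in its second argument. The ``main obstacle'' you flag is not really one: if an even $b$ had odd neighbors in two distinct $A_i^*$'s, those neighbors would be at distance~$2$, contradicting non-adjacency directly.
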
 
As a convention, for $B\in \sB_{W^*}$, we will denote by $B_1^*, \ldots, B_{k^*}^*$ the even vertex sets as in Lemma \ref{lem: multiple cluster cannot merge}. We continue to suppose that $A\cup W^*$ is a disjoint union of non-adjacent odd clusters $A^*_1, \ldots, A^*_{k^*}$.
\begin{lem}\label{lem: C cluster to sequence reduction}
    Recall Definitions~\ref{def: minimal minus at T-1} and \ref{def: cluster sequence with total sum B}. For any sequences $\mathsf{C}_i=(C_{i,1},\cdots,C_{i,l_i})~(1\le i\le k^*)$ such that $\cup_{j=1}^{l_i} C_{i, j} \subset A_i^*$, let $\U(\mathsf{C}_1,\cdots,\mathsf{C}_{k^*})=(C_{i,j})_{1\le i\le k^*,1\le j\le l_i}$. Let $S_i(p)$ be the collection of $ x\in A_i^*\cap W$ (recall \eqref{eq: def of extra points}) such that $x$ is a type $p$ double trifurcation of $A^*_i$ and let $\fS_i=(S_i(3),\cdots,S_i(d))$. Then we have  \begin{equation}\label{eq: C cluster to sequence reduction 1}
        \bigcup\limits_{C \in \mathcal C}\sB(A \cup W,C\cup W^*)\subset\bigcup\limits_{\substack{r_{i,1}+r_{i,2}\ge 1,\\ m_{i,1},m_{i,2}\ge 0,\\ \textsf{ for } 1\leq i\leq k^*}}~\bigcup\limits_{\substack{\mathsf{C}_i\in \sC(A_i^*,r_{i,1},r_{i,2},m_{i,1},m_{i,2},\fS_i),\\\textsf{ for } 1\leq i\leq k^*}}\frB\Big(A\cup W,\U(\mathsf{C}_1,\cdots,\mathsf{C}_{k^*})\Big).
    \end{equation}
    Furthermore, we have \begin{equation}\label{eq: C cluster to sequence reduction 2}
        |\frB(A\cup W,\U(\mathsf{C}_1,\cdots,\mathsf{C}_{k^*}))|\le \prod_{i=1}^{k^*}|\frB(A_i^*,\mathsf{C}_i)|.
    \end{equation}
\end{lem}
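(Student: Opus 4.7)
The plan is to prove both \eqref{eq: C cluster to sequence reduction 1} and \eqref{eq: C cluster to sequence reduction 2} by reading off the sequences $\mathsf C_i$ from the canonical decomposition of $B$ supplied by Lemma~\ref{lem: multiple cluster cannot merge}. Given $B \in \sB(A\cup W, C\cup W^*)$ with $C \in \mathcal C$, I first write $B = B_1^* \sqcup \cdots \sqcup B_{k^*}^*$ with each $B_i^* \in \sB(A_i^*)$, mutually non-adjacent, and inducing an odd subset of $A_i^*$. The non-adjacency forces $\dist(B_i^*, B_j^*) \geq 4$ for $i \neq j$, so no odd vertex can have neighbors in two different $B_i^*$'s, and hence $\Prr(A_i^*, B_i^*) = A_i^* \cap \Prr(A\cup W, B) = A_i^* \cap (C \cup W^*)$. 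I then decompose each $B_i^*$ into its maximal non-adjacent even clusters $B_{i,1}^*, \ldots, B_{i,l_i}^*$ and invoke Lemma~\ref{lem: induced cluster} to get the induced odd clusters $C_{i,j}$; ordering them by first appearance in the fixed depth-first search on $G_{A_i^*}$ produces $\mathsf C_i = (C_{i,1},\ldots,C_{i,l_i})$, with parameters $r_{i,1}, r_{i,2}, m_{i,1}, m_{i,2}$ read off from the cluster sizes.

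I then verify that $\mathsf C_i \in \sC(A_i^*, r_{i,1}, r_{i,2}, m_{i,1}, m_{i,2}, \fS_i)$. Property~(1) of Definition~\ref{def: multi cluster connect 1} is immediate from the construction. For property~(2), I use that $A \cap W = \emptyset$ (which follows from the non-adjacency of the $A_j$'s built into the definition of $W$) to get $S_i(p) \subset A_i^* \cap W = A_i^* \cap W^*$, and then $W^* \subset C \cup W^*$ together with the identification of $\bigcup_j C_{i,j}$ above yields $S_i(p) \subset \bigcup_j C_{i,j}$. For property~(3), I fix a double-neighbor component $\mathsf A_{i,l}$ of $A_i^* \setminus \bigcup_p S_i(p)$ and argue by cases: if $\mathsf A_{i,l} \cap A = \emptyset$, then $\mathsf A_{i,l} \subset A_i^* \setminus A \subset W^*$, so $\mathsf A_{i,l}$ already lies in $\bigcup_j C_{i,j}$; otherwise I pick $v \in \mathsf A_{i,l} \cap A_j$, use the double-neighbor connectivity of $A_j$ together with $A_j \cap \bigcup_p S_i(p) = \emptyset$ to conclude $A_j \subset \mathsf A_{i,l}$, and then $C \cap A_j \neq \emptyset$ from $C \in \mathcal C$ gives the desired non-empty intersection.

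For the counting bound~\eqref{eq: C cluster to sequence reduction 2}, I use the map $B \mapsto (B_1^*, \ldots, B_{k^*}^*)$, which is injective since the $B_i^*$'s partition $B$. Each $B_i^*$ lies in $\frB(A_i^*, \mathsf C_i)$ because $B_i^* \in \sB(A_i^*)$ by Lemma~\ref{lem: multiple cluster cannot merge}(1) and its even clusters $B_{i,1}^*,\ldots,B_{i,l_i}^*$ induce $C_{i,1},\ldots,C_{i,l_i}$ by construction. I expect the most delicate point to be property~(3) of Definition~\ref{def: multi cluster connect 1}: the argument hinges on the geometric fact that since $\bigcup_p S_i(p) \subset W$ is disjoint from $A$, removing these vertices can only cut $A_i^*$ along $W^*$-vertices and never fragments an original cluster $A_j$, which is exactly what guarantees that every component of the cut either inherits a full $A_j$ (and therefore meets $C$) or lies entirely in $W^*$.
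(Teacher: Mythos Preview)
Your proposal is correct and follows the same route as the paper: decompose $B$ via Lemma~\ref{lem: multiple cluster cannot merge}, read off the sequences $\mathsf C_i$ from the even clusters of each $B_i^*$, verify the three conditions of Definition~\ref{def: multi cluster connect 1}, and for \eqref{eq: C cluster to sequence reduction 2} use that $B_i^*\in\frB(A_i^*,\mathsf C_i)$. You are in fact slightly more thorough than the paper (you verify Condition~(2) explicitly and give a two-case argument for Condition~(3)), though your Case~1 is actually vacuous: any $x\in W^*\cap A_i^*$ has, by the definition of $W$, a double-neighbor in some $A_q\subset A$, and this neighbor lies in $A_i^*\setminus\bigcup_p S_i(p)$ and hence in the same component $\mathsf A_{i,l}$, so every component already meets $A$.
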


By Lemma~\ref{lem: C cluster to sequence reduction}, we get that 
\begin{align}
    &\sum_{C \in \mathcal C}\sum_{B\in \sB(A \cup W,C\cup W^*)}\P(\sigma_B(T-1)=-1)\cdot (\frac{\eps}{2})^{n-|C|}\cdot(1-\frac{\eps}{2})^{|C|+|W^*|}\nonumber
    \\\le~&\sum_{\substack{r_{i,1}+r_{i,2}\ge 1,\\ m_{i,1},m_{i,2}\ge 0,\\ \textsf{ for } 1\leq i\leq k^*}}\sum_{\substack{\mathsf{C}_i\in \sC(A_i^*,r_{i,1},r_{i,2},m_{i,1},m_{i,2},\fS_i),\\\textsf{ for } 1\leq i\leq k^*}}\sum_{B\in\frB(A\cup W,\U(\mathsf{C}_1,\cdots,\mathsf{C}_{k^*}))}\P(\sigma_{B}(T-1)=-1)\nonumber\\&\cdot (\frac{\eps}{2})^{n-|C|}\cdot(1-\frac{\eps}{2})^{|C|+|W^*|}.\label{eq: merge clusters enumeration 0}
\end{align}
We wish to use our induction hypothesis together with Lemma \ref{lem: multiple cluster cannot merge} to upper-bound $\P(\sigma_B(T-1)=-1)$. Let $b_{i,j}=|B_{i,j}|,~s_{i,j}=\Tri(B_{i,j})$ and $t_{i,j}=\Dtri(B_{i,j})$. 
Then, applying our induction hypothesis \eqref{eq: minus separate product} we get that \begin{equation}\label{eq: multi cluster expansion}
    \P(\sigma_B(T-1)=-1)\le \prod_{i=1}^{k^*}\prod_{j=1}^{l_i}\Big[(\frac{\eps}{2})^{b_{i,j}}+c_1\eps^{b_{i,j}+1}c_2^{s_{i,j}}c_3^{t_{i,j}}\Big].
\end{equation}
Next, we want to bound the right-hand side of \eqref{eq: multi cluster expansion} using the same strategy as we did for \eqref{eq: 1cluster prob at T-1 with tri step 1}. Let $c_{i,j}=|C_{i,j}|$. Then by \eqref{eq: 1cluster prob at T-1 with tri}, we get that \begin{align}
    &\prod_{j=1}^{l_i}\Big[(\frac{\eps}{2})^{b_{i,j}}+c_1\eps^{b_{i,j}+1}c_2^{s_{i,j}}c_3^{t_{i,j}}\Big]\nonumber\\\le~&\eps^{m_{i,1}+m_{i,2}+r_{i,1}+r_{i,2}+\xi^*(A_i^*,B_i^*)}\times\prod_{j=1}^{r_{i,1}+r_{i,2}}\Big[(\frac{1}{2})^{c_{i,j}+1}+c_1\eps c_2^{s_{i,j}}c_3^{t_{i,j}}\Big].\label{eq: multi cluster expansion step 2}
\end{align}
Applying \eqref{eq: trifurcation relation 1}, \eqref{eq: trifurcation relation 2} and \eqref{eq: 1cluster prob at T-1 with tri step 2} to the last term in \eqref{eq: multi cluster expansion step 2}, we get that\begin{align}
    \prod_{j=1}^{r_{i,1}+r_{i,2}}\Big[(\frac{1}{2})^{c_{i,j}+1}+c_1\eps c_2^{s_{i,j}}c_3^{t_{i,j}}\Big]&\le c_2^{\xi^*(A_i^*,B_i^*)}\cdot c_3^{\Tri(A_i^*)}\cdot\prod_{i=1}^{r_{i,1}+r_{i,2}}\Big[(\frac{1}{2})^{c_{i,j}+1}+c_1\eps \Big].\label{eq: multi cluster expansion step 3}
\end{align}
Next, we split the clusters $C_{i,j}$ into large and small clusters according to the comparison between $c_{i,j}$ and $-\log_2(c_1\eps)-1$ (in the sense that $C_{i,j}$ is a large cluster when $c_{i, j}$ is the larger one). Then we get from \eqref{eq: 1cluster prob at T-1 with tri step 3}
\begin{align}
   \prod_{i=1}^{r_{i,1}+r_{i,2}}\Big[(\frac{1}{2})^{c_{i,j}+1}+c_1\eps\Big]&\le 2^{r_{i,1}-m_{i,2}}\times(c_1\eps)^{r_{i,1}}.\label{eq: multi cluster expansion step 4}
\end{align}
Combining \eqref{eq: multi cluster expansion}, \eqref{eq: multi cluster expansion step 2}, \eqref{eq: multi cluster expansion step 3} and \eqref{eq: multi cluster expansion step 4}, we get that
\begin{align}
    &\P(\sigma_B(T-1)=-1)\nonumber\\\le~& \prod_{i=1}^{k^*}2^{r_{i,1}-m_{i,2}}\times\eps^{m_{i,1}+m_{i,2}+2r_{i,1}+r_{i,2}+\xi^*(A_i^*,B_i^*)}\cdot c_1^{r_{i,1}}\cdot c_2^{\xi^*(A_i^*,B_i^*)}\cdot c_3^{\Tri(A_i^*)}.\label{eq: multi cluster induction hypothesis}
\end{align}
\begin{lem}\label{lem: dtri and cluster number relation}
Let $\mathsf{A}$ be an odd cluster and let $S(p)\subset \mathsf{A}$ satisfy that for any $ x\in S(p)$, $x$ is a type $p$ double trifurcation of $\mathsf{A}$. Recall $\fS=(S(3),\cdots,S(d))$ and recall that $\kappa(\fS)=\sum_{p=3}^d (p-2)|S(p)|$.
    Let $\mathsf{C}\in \sC(\mathsf{A},r_1,r_2,m_{1},m_{2},
    \fS)$ and $B\subset \mathfrak{B}(\mathsf{A},\mathsf{C})$. Then we have $r_1+r_2+\xi^*(\mathsf{A},B)\ge \kappa(\fS)+1$.
\end{lem}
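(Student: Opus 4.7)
The plan is to exploit the tree structure of $G_{\mathsf{A}} = G|_{\mathsf{A} \cup D}$ (Definition~\ref{def: G_A}), where $D$ denotes the even cluster induced by $\mathsf{A}$. Since $\mathbb T_d$ is a tree and $\mathsf{A}$ is double-connected, $G_{\mathsf{A}}$ is a subtree of $\mathbb T_d$ with bipartition $(\mathsf{A},D)$. Crucially, for each $x\in S(p)$ the $p$ witness neighbors $y_1,\ldots,y_p\in N(x)$ of the double-trifurcation of $\mathsf{A}$ satisfy $|N(y_i)\cap\mathsf{A}|\ge 2$ (containing $x$ and the guaranteed $z_i\in\mathsf{A}\setminus\{x\}$), so $y_i\in D$ and hence $\deg_{G_{\mathsf{A}}}(x)\ge p$.

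First I would establish the identity
\begin{equation*}
 r_1+r_2+\xi^*(\mathsf{A},B) \;=\; |B|-|\Prr(\mathsf{A},B)|
\end{equation*}
by a two-way edge count in the subforest $T_B=G_{\mathsf{A}}[B\cup\mathsf{A}_B]$, where $\mathsf{A}_B=\{v\in\mathsf{A}:N(v)\cap B\ne\emptyset\}$. Since $B_j\subset D$, since non-adjacency of the $B_j$'s forces each $v\in\mathsf{A}_B$ to be adjacent to exactly one $B_j$, and since each $B_j$ together with its $\mathsf{A}$-adjacent odd vertices is a connected subtree of $G_{\mathsf{A}}$, the forest $T_B$ has exactly $r_1+r_2$ components; counting edges both as $\sum_v|N(v)\cap B|$ and as $|B|+|\mathsf{A}_B|-(r_1+r_2)$ delivers the identity. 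Applying Lemma~\ref{lem: leaf trifurcation relation} to each pair $(C_j,B_j)$ then gives $|B_j|-|C_j|=1+\xi^*(C_j,B_j)$, and the lemma reduces to
\begin{equation*}
 (r_1+r_2)+\sum_{j=1}^{r_1+r_2}\xi^*(C_j,B_j)\;\ge\;\kappa(\fS)+1.
\end{equation*}

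The core of the argument analyzes each branching $x\in S(p)$ in $G_{\mathsf{A}}$. Say $x\in C_{j(x)}$ (condition~2 of Definition~\ref{def: multi cluster connect 1}), and classify each witness $y_i$ as \emph{inner} if $y_i\in B_{j(x)}$ and \emph{outer} otherwise. Inner witnesses contribute directly to $|N(x)\cap B_{j(x)}|$ and hence to $\xi^*(C_{j(x)},B_{j(x)})$. For each outer witness, the branch of $x$ in $G_{\mathsf{A}}$ through $y_i$ is vertex-disjoint from $T_{j(x)}=G_{\mathsf{A}}[C_{j(x)}\cup B_{j(x)}]$, but because $y_i\in D$ it still contains an $\mathsf{A}$-vertex; by condition~3 of Definition~\ref{def: multi cluster connect 1}, that $\mathsf{A}$-vertex either lies in some $C_l$ directly or belongs to an $\mathsf{A}_i$ that meets some $C_l$, necessarily with $l\ne j(x)$. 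Since distinct outer witnesses of $x$ produce vertex-disjoint branches and each $T_l$ ($l\ne j(x)$) fits into at most one branch of $x$, the outer witnesses reach pairwise distinct clusters.

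The delicate step is the global bookkeeping, since outer witnesses of several $x\in S$ may funnel into a common $C_l$ through branches that themselves traverse further $S$-vertices. To handle this I would pass to the quotient tree $\bar T$ obtained from $G_{\mathsf{A}}$ by contracting each $T_j$ to a single node $\bar T_j$: outer witnesses become $\bar T$-edges incident to cluster-nodes, and $\bar T$ remains a tree because $T^*$ is a tree and the $T_j$'s are vertex-disjoint connected subtrees. Applying the handshake identity $\sum_v(\deg_{\bar T}(v)-2)=-2$ on $\bar T$, together with the per-$x$ balance $|N(x)\cap B_{j(x)}|-2+\#\{\text{outer witnesses of }x\}\ge p-2$ (which comes from $\deg_{G_{\mathsf{A}}}(x)\ge p$ plus the inner/outer split), yields the desired $\kappa(\fS)+1$; the extra ``$+1$'' comes from $\bar T$ being a connected tree (one more node than edges). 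The main obstacle is precisely this last global accounting: one must ensure that every outer witness of every $x\in S$ is ``paid for'' either by an increment to $\sum_j\xi^*(C_j,B_j)$ or by the existence of an extra cluster $C_l$, without double-counting when multiple $x$'s share exterior clusters.
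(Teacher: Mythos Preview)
Your opening identity $r_1+r_2+\xi^*(\mathsf{A},B)=|B|-|\Prr(\mathsf{A},B)|$ is correct (with the minor fix that $T_B$ should be $G|_{B\cup\mathsf{A}_B}$, not $G_{\mathsf{A}}[B\cup\mathsf{A}_B]$, since in general $B\not\subset D$), and since $\sum_j\xi^*(C_j,B_j)=\xi^*(\mathsf{A},B)$ your ``reduction'' is just a rewriting of the target inequality. The inner/outer dichotomy is also fine: summing your per-$x$ balance over $S$ and using $\sum_{x\in S}(|N(x)\cap B_{j(x)}|-2)\le\xi^*(\mathsf{A},B)$ shows the lemma follows from
\[
\sum_{x\in S}\#\{\text{outer witnesses of }x\}\ \le\ (r_1+r_2)-1.
\]

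The gap is in how you propose to prove this last inequality. The handshake identity on $\bar T$ (or on the Steiner tree $\bar T'$ of the cluster-nodes) only yields $\sum_j\deg_{\bar T'}(\bar T_j)\le 2(r_1+r_2)-2$, because non-cluster vertices of $\bar T'$ can have degree $\ge 3$; this is too weak by a factor of two. What actually closes the argument is not a degree sum but a \emph{component count}: removing all outer-witness edges from $G$ (a tree) produces exactly $1+\#\text{outer}$ components, and condition~3 of Definition~\ref{def: multi cluster connect 1} (together with condition~2 for the case $z_i\in S$) forces each component to contain some $C_l$, whence $r_1+r_2\ge 1+\#\text{outer}$. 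This is precisely the paper's proof, which bypasses your identity and the quotient tree entirely: it removes the edges $\{x,y_i\}$ with $y_i\notin B$ directly from $G$, observes that each resulting component contains an odd component of $\mathsf{A}\setminus S$ and hence meets $C$, and concludes that $\hat G=G|_{B\cup C}$ already has at least $1+\#\text{outer}$ components. Your setup is compatible with this finish, but the handshake as you describe it is not the right tool for the ``global accounting'' you correctly flag as the obstacle.
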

Combining \eqref{eq: multi cluster induction hypothesis}, Lemma~\ref{lem: dtri and cluster number relation} and the fact that $c_2\eps\le 1$, we get that
\begin{equation*}\label{eq: multi cluster induction hypothesis new}
    \P(\sigma_B(T-1)=-1)\le \prod_{i=1}^{k^*}2^{r_{i,1}-m_{i,2}}\cdot\eps^{m_{i,1}+m_{i,2}+2r_{i,1}+r_{i,2}}\cdot c_1^{r_{i,1}}\cdot (c_2\eps)^{\max\{\kappa(\fS_i)+1-r_{i,1}-r_{i,2},0\}}\cdot c_3^{\Tri(A_i^*)}.
\end{equation*}
Combined with \eqref{eq: merge clusters enumeration 0}, it yields that
\begin{align}
    &\sum_{C \in \mathcal C}\sum_{B\in \sB(A \cup W,C\cup W^*)}\P(\sigma_B(T-1)=-1)\cdot (\frac{\eps}{2})^{n-|C|}\cdot(1-\frac{\eps}{2})^{|C|+|W^*|}\nonumber
    \\
    \le~&\sum_{\substack{r_{i,1}+r_{i,2}\ge 1,\\ m_{i,1},m_{i,2}\ge 0,\\ \textsf{ for } 1\leq i\leq k^*}}\sum_{\substack{\mathsf{C}_i\in \sC(A_i^*,r_{i,1},r_{i,2},m_{i,1},m_{i,2},\fS_i),\\ \textsf{ for } 1\leq i\leq k^*}}\sum_{B\in\sB(A\cup W,\U(\mathsf{C}_1,\cdots,\mathsf{C}_{k^*}))}\prod_{i=1}^{k^*}2^{-|A_i^*|+m_{i,1}+r_{i,1}}\nonumber\\&\times\eps^{|A_i^*|+2r_{i,1}+r_{i,2}}\times(1-\frac{\eps}{2})^{m_{i,1}+m_{i,2}}\times c_1^{r_{i,1}}\times (c_2\eps)^{\max\{\kappa(\fS_i)+1-r_{i,1}-r_{i,2},0\}}\times c_3^{\Tri(A_i^*)}.\label{eq: merge clusters enumeration 1}
\end{align} Note that the summand on the right-hand side of \eqref{eq: merge clusters enumeration 1} depends on $B$ only through the set $\{r_{i, j}, m_{i, j}\}_{1\le i\le k^*,1\le j\le l_i}$. Furthermore, by Lemma~\ref{lem: C cluster to sequence reduction} (in particular, \eqref{eq: C cluster to sequence reduction 2}), we get from \eqref{eq: merge clusters enumeration 1} that
\begin{align}
    &\sum_{C \in \mathcal C}\sum_{B\in \sB(A \cup W,C\cup W^*)}\P(\sigma_B(T-1)=-1)\cdot (\frac{\eps}{2})^{n-|C|}\cdot(1-\frac{\eps}{2})^{|C|+|W^*|}\nonumber\\ \le~&\sum_{\substack{r_{i,1}+r_{i,2}\ge1,\\ m_{i,1},m_{i,2}\ge 0,\\\textsf{ for } 1\leq i\leq k^*}}\sum_{\substack{\mathsf{C}_i\in \sC(A_i^*,r_{i,1},r_{i,2},m_{i,1},m_{i,2},\fS_i),\\ \textsf{ for } 1\leq i\leq k^*}}\prod_{i=1}^{k^*}|\frB(A_i^*,\mathsf{C}_i)|\cdot\prod_{i=1}^{k^*}2^{-|A_i^*|+m_{i,1}+r_{i,1}}\nonumber\\&\times\prod_{i=1}^{k^*}\eps^{|A_i^*|+2r_{i,1}+r_{i,2}}\times(1-\frac{\eps}{2})^{m_{i,1}+m_{i,2}}\times c_1^{r_{i,1}}\times (c_2\eps)^{\max\{\kappa(\fS_i)+1-r_{i,1}-r_{i,2},0\}}\times c_3^{\Tri(A_i^*)}\nonumber\\
    =~&\prod_{i=1}^{k^*}M_i.\label{eq: merge clusters enumeration}
\end{align}
where $M_i$ is defined by \begin{align}
    M_i=&\sum_{\substack{r_1+r_2\ge 1,\\ m_{1},m_{2}\ge 0}}\sum_{\substack{\mathsf{C}_i\in\sC(A_i^*,r_1,r_2,m_1,m_2,\fS_i) }}|\frB(A_i^*,\mathsf{C}_i)|\cdot2^{-|A_i^*|+m_{1}+r_1}\nonumber\\&\times\eps^{|A_i^*|+2r_1+r_2}\times(1-\frac{\eps}{2})^{m_{1}+m_{2}}\times c_1^{r_1}\times (c_2\eps)^{\max\{\kappa(\fS_i)+1-r_1-r_2,0\}}\times c_3^{\Tri(A_i^*)}\label{eq: specific cluster in merging A}
\end{align}for each $1\le i\le k^*$. We want to repeat our analysis in Section~\ref{sec: enumeration overview} and get a conclusion similar to  \eqref{eq: k=1 final calculation with tri}.

For notation clarity, let $\chi_i=\chi(A_i^*)$ and $n_i = |A^*_i|$. Define $F_i(r_1, r_2) = F(n_i, \chi_i; r_1, r_2)$ as in \eqref{eq: def of F}. Combining \eqref{eq: specific cluster in merging A} with Lemmas~\ref{lem: enumeration induced relation} and \ref{lem: enumeration lemma final}, we get that\begin{align}
    M_i\le~&\sum_{\substack{r_1+r_2\ge 1}}F_i(r_1,r_2)\big((d-1)2^{d+5}\big)^{\chi_i}\cdot \binom{r_1+r_2}{r_1}\cdot (d-1)^{2r_1+2r_2}\cdot 2^{2r_1+r_2+1}\cdot\eps^{|A_i^*|+2r_1+r_2}\nonumber\\&\times  c_1^{r_1}\times(c_2\eps)^{\max\{\kappa(\fS_i)+1-r_1-r_2,0\}}\times c_3^{\Tri(A_i^*)}.\label{eq: first bound for Mi}
\end{align} Combined with Lemma~\ref{lem: bound for F tri}, it yields that \begin{align}
M_i\le~&\sum_{\substack{r_1+r_2\ge 1}}2^{2r_1+2r_2+\chi_i}\eps^{-r_1}\big((d-1)2^{d+5}\big)^{\chi_i}\cdot \binom{r_1+r_2}{r_1}\cdot (d-1)^{2r_1+2r_2}\cdot 2^{2r_1+r_2+1}\nonumber\\
&\times\eps^{|A_i^*|+2r_1+r_2}\times c_1^{r_1}\times(c_2\eps)^{\max\{\kappa(\fS_i)+1-r_1-r_2,0\}}\times c_3^{\Tri(A_i^*)}\nonumber\\
\le~&\sum_{1\le r_1+r_2\le \kappa(\fS_i)}\eps^{|A_i^*|+\kappa(\fS_i)+1}\cdot 2^{5r_1+4r_2+1}\cdot c_1^{r_1}\cdot c_2^{\kappa(\fS_i)+1-r_1-r_2}\cdot \big((d-1)2^{d+6}c_3\big)^{ \Tri(A_i^*)}\nonumber\\
&\times \big((d-1)2^{d+6}\big)^{\Dtri(A_i^*)}\nonumber\\
&+\sum_{r_1+r_2\ge \kappa(\fS_i)+1}\eps^{|A_i^*|+r_1+r_2}\cdot 2^{5r_1+4r_2+1} \cdot \big((d-1)2^{d+6}c_3\big)^{\Tri(A_i^*)}\cdot \big((d-1)2^{d+6}\big)^{\Dtri(A_i^*)}.\nonumber
\end{align} 
Thus in conclusion, there exists an absolute constant $C_1>1$ such that \begin{align}
    M_i&\le C_1^{\kappa(\fS_i)+1} (c_1 c_2)^{\kappa(\fS_i)+1}\cdot \big((d-1)2^{d+6}c_3\big)^{\Tri(A_i^*)}\cdot \big((d-1)2^{d+6}\big)^{\Dtri(A_i^*)}\cdot \eps^{|A_i^*|+\kappa(\fS_i)+1}.\label{eq: multi cluster induction calculation}
\end{align}
In order to give a better control on the special case $\cup_{p=3}^dS_i(p)=\emptyset$, note that the right-hand side of \eqref{eq: first bound for Mi} is the same as the right-hand side of \eqref{eq: k=1 with tri final}. Thus we get from the calculation in \eqref{eq: k=1 final calculation with tri} that there exists a constant $C_2>1$ depending only on $d$ such that (we also assume $c_1\eps<1$)
\begin{align}
    M_i&\le \big((d-1)2^{(d+6)}\big)^{\chi_i}\cdot c_3^{\Tri(A_i^*)}\times\big[32(d-1)^2c_1\eps^{|A_i^*|+2}+8(d-1)^2\eps^{|A_i^*|+1}\nonumber\\&+\frac{7\cdot 2^{9}(d-1)^4\eps^{|A_i^*|+2}-3\cdot 2^{14}(d-1)^6\eps^{|A_i^*|+3}}{(1-32(d-1)^2\eps)(1-16(d-1)^2\eps)}\big]\nonumber\\&\le \big((d-1)2^{(d+6)}c_3\big)^{\Tri(A_i^*)}\cdot \big((d-1)2^{(d+6)}\big)^{\Dtri(A_i^*)}\cdot C_2\eps^{|A_i^*|+1}.\label{eq: multi cluster induction calculation 2}
\end{align}
Combining \eqref{eq: merge clusters enumeration} with \eqref{eq: multi cluster induction calculation} and \eqref{eq: multi cluster induction calculation 2}, we get that \begin{align}
    &\sum_{C \in \mathcal C}\sum_{B\in \sB(A\cup W,C\cup W^*)}\P(\sigma_B(T-1)=-1)\cdot (\frac{\eps}{2})^{n-|C|}\cdot(1-\frac{\eps}{2})^{|C|+|W^*|}\nonumber\\
    \le~&\prod_{i=1}^{k^*} C_2\cdot\big((d-1)2^{(d+6)}c_3\big)^{\Tri(A_i^*)}\cdot \big((d-1)2^{(d+6)}\big)^{\Dtri(A_i^*)}\cdot \eps^{|A_i^*|+\kappa(\fS_i)+1}\times \prod_{\kappa(\fS_i)\ge 1}(c_1c_2C_1)^{\kappa(\fS_i)+1}\nonumber\\
=~&C_2^{k^*}\eps^{k^*+\sum_{i=1}^{k^*}(|A_i^*|+\kappa(\fS_i))}\cdot (c_1c_2C_1)^{\sum_{\kappa(\fS_i)\ge 1}(\kappa(\fS_i)+1)}\cdot \big((d-1)2^{(d+6)}c_3\big)^{ \sum_{i=1}^{k^*}\Tri(A_i^*)}\nonumber\\&\times\big((d-1)2^{(d+6)}\big)^{\sum_{i=1}^{k^*}\Dtri(A_i^*)}.\label{eq: merged cluster bound collect}
\end{align}
Next, we bound the exponents in \eqref{eq: merged cluster bound collect}. 
\begin{lem}\label{lem: bound for the exponents in the multi cluster case}
    Let $A$ be a collection of odd vertices and let $W$ be defined as in \eqref{eq: def of extra points}. Let $W^*\subset W$ be a subset of $W$. Recall that $A_1^*,\cdots,A_{k^*}^*$ are the odd connected components of $A\cup W^*$. Let $S_i(p)$ be the collection of $ x\in A_i^*\cap W$ such that $x$ is a type $p$ double trifurcation of $A^*_i$ and let $\mathsf{S}_i=(S_i(3),\cdots,S_i(d))$. Recall that $\kappa(\mathsf{S}_i) = \sum_{p=3}^d (p-2)|S_i(p)|$. Then we have the following inequalities:
    \begin{equation}\label{eq: cluster number edge relation}
   k^*+\sum_{i=1}^{k^*}(|A_i^*|+\kappa(\fS_i))=|A|+ k^*+|W^*|+\sum_{i=1}^{k^*}\kappa(\fS_i)=|A|+k,
\end{equation}
 \begin{equation}\label{eq: cluster merge trifurcation relation}
    \sum_{i=1}^{k^*}\Tri(A_i^*)\le dk+\sum_{i=1}^{k}\Tri(A_i)~~\text{and}~~\sum_{i=1}^{k^*}\Dtri(A_i^*)\le (d^2-d+1)k+\sum_{i=1}^{k}\Dtri(A_i),
\end{equation} 
 \begin{equation}\label{eq: dtri in extra point number bound}
    \sum_{i=1}^{k^*}(\kappa(\fS_i)+1)\le \frac{(d-1)k}{d}.
\end{equation}
\end{lem}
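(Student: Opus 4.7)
The plan is to prove the three parts of Lemma~\ref{lem: bound for the exponents in the multi cluster case} sequentially, in each case leveraging the bipartite tree structure of $\mathbb{T}_d$ together with a careful cluster-by-cluster analysis.

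For \eqref{eq: cluster number edge relation}, the first equality is immediate from $\sum_i|A_i^*|=|A|+|W^*|$: since $\{A_i^*\}_{i\le k^*}$ partitions $A\cup W^*$ and $A\cap W=\emptyset$ (every $x\in A_j$ is at distance $0$ from $A_j$ but at distance $>2$ from every other $A_{j'}$ by non-adjacency, hence cannot belong to $W$), both sides of the first equality reduce to $|A\cup W^*|=|A|+|W^*|$. The substance is the second equality $k^*+|W^*|+\sum_i\kappa(\fS_i)=k$. The approach is to work cluster-by-cluster via the Steiner tree $T_i$ of $A_i^*$ inside $\mathbb{T}_d$, exploiting the \emph{convexity} of odd clusters in a tree: whenever two odd vertices lie in the same double-neighbor cluster, every odd vertex on the unique $\mathbb{T}_d$-path between them must also lie in the cluster. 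This forces $\deg_{T_i}(w)=p(w)$ for every $w\in W_i^*$, because any even neighbor $y$ of $w$ that $T_i$ needs must itself be adjacent to some vertex of $A_i^*\setminus\{w\}$. Contracting each original $A_j\subset A_i^*$ (via its own Steiner tree) to a single point in $T_i$ produces a tree whose degree sum equals twice the number of edges; summing the contributions from the contracted $[A_j]$-nodes, the $W_i^*$-nodes, and the remaining connector even vertices yields a per-cluster identity relating $\ell_i$, $|W_i^*|$, and $\kappa(\fS_i)$, which sums to the claim upon using $\sum_i\ell_i=k$.

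For \eqref{eq: cluster merge trifurcation relation}, decompose $A_i^*=A^{(i)}\cup W_i^*$ with $A^{(i)}=A\cap A_i^*$. Non-adjacency of the $A_j$'s forces every vertex $x$ to have neighbors in at most one $A_j$ (else the two $A_j$'s would merge via the common neighbor $x$), hence $\Tri(A^{(i)})=\sum_{j:A_j\subset A_i^*}\Tri(A_j)$. Applying the elementary inequality $\max(a+b-2,0)\le\max(a-2,0)+b$ pointwise with $a=|N(x)\cap A^{(i)}|$ and $b=|N(x)\cap W_i^*|$, and then summing $\sum_x|N(x)\cap W_i^*|=d|W_i^*|$ by handshaking, yields $\Tri(A_i^*)\le \sum_{j:A_j\subset A_i^*}\Tri(A_j)+d|W_i^*|$. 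Summing over $i$ and using $|W^*|\le k-1$ from Lemma~\ref{lem: number of extra points} produces the first bound. For the $\Dtri$ inequality, the analogous argument compares $p(x;A_i^*)$ with $p(x;A^{(i)})$, and the extra branches through $W_i^*$ are bounded using the tree property (distinct vertices of $\mathbb{T}_d$ have at most one common neighbor), which sharpens the crude $d^2$-per-$w$ bonus down to the claimed $(d^2-d+1)$-per-$w$ constant.

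The hardest step is \eqref{eq: dtri in extra point number bound}. The plan is to combine the identity from \eqref{eq: cluster number edge relation}, which gives $\sum_i\kappa(\fS_i)=k-k^*-|W^*|$ and therefore $\sum_i(\kappa(\fS_i)+1)=k-|W^*|$ (after absorbing $k^*$ on the appropriate side), with a lower bound $|W^*|\ge k/d$ that arises from the degree structure of $\mathbb{T}_d$. Specifically, each $w\in W_i^*$ can bridge at most $d$ distinct objects among the $\{A_j\}\cup(W^*\setminus\{w\})$ through its $d$ neighbor slots, so a double count of $(A_j,w)$-adjacencies yields a lower bound on $|W^*|$ in terms of how many original clusters are merged. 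The main obstacle is obtaining the sharp constant $(d-1)/d$ rather than a weaker constant: one must show that when many $A_j$'s are absorbed into a single $A_i^*$, either $W_i^*$ is correspondingly large (forcing $|W^*|$ to be close to $k-k^*$) or some of the $W_i^*$-vertices have small $p(w)$ (contributing little to $\kappa$). This balance, which must be extracted from the branching structure of $\mathbb{T}_d$, is what I expect to be the most delicate part of the proof.
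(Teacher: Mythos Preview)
Your outline for \eqref{eq: cluster number edge relation} and \eqref{eq: cluster merge trifurcation relation} parallels the paper's. For \eqref{eq: cluster number edge relation} the paper adds the vertices of $W^*$ to $A$ one at a time and tracks the drop in the number of odd components, attributing a drop of $p-1$ to each $x\in W^*$ of double-trifurcation type $p$; your Steiner-tree count is the same bookkeeping in static form. For \eqref{eq: cluster merge trifurcation relation} the paper uses the monotonicity bounds $\Tri(D_2)-\Tri(D_1)\le d\,|D_2\setminus D_1|$ and $\Dtri(D_2)-\Dtri(D_1)\le d(d-1)\,|D_2\setminus D_1|$ with $D_2=A\cup W^*$, $D_1=A$, together with $\Dtri(A)\le\sum_j\Dtri(A_j)+|W|$; this is your pointwise inequality repackaged.

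The real gap is at \eqref{eq: dtri in extra point number bound}, and it is not a matter of constants. Your reduction $\sum_i(\kappa(\fS_i)+1)=k-|W^*|$ followed by $|W^*|\ge k/d$ is precisely the paper's route (it records $|W^*|\ge\tfrac{1}{d-1}\sum_i(\kappa(\fS_i)+1)$ and combines with \eqref{eq: cluster number edge relation}), but the target $|W^*|\ge k/d$ is false, and so is \eqref{eq: dtri in extra point number bound} as stated: for $W^*=\emptyset$ every $\kappa(\fS_i)=0$ and the left side equals $k>(d-1)k/d$. Worse, take $d=3$, four singleton clusters $A_1,\dots,A_4$, and $W^*=\{w_1,w_2\}$ where $w_j$ is a common double neighbor of $A_{2j-1},A_{2j}$ and $w_1,w_2$ are themselves double neighbors through an even vertex $y$ having no neighbor in $A$. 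Then $k=4$, $k^*=1$, both $w_1,w_2$ are type-$3$ double trifurcations of $A_1^*$ (each sees the other through $y$), so $\kappa(\fS_1)=2$ and $\kappa(\fS_1)+1=3>8/3$; the same example gives $k^*+|W^*|+\sum_i\kappa(\fS_i)=5\neq 4$, so the second equality in \eqref{eq: cluster number edge relation} also fails. Your Steiner-tree derivation would detect this as the isolated even vertex $y$ in $T_1\setminus W_1^*$, which your ``per-cluster identity'' does not account for. In short, both your proposal and the paper's argument break down here because the lemma as stated is not correct; the statement needs to be weakened (for the application only the inequality $\ge$ in \eqref{eq: cluster number edge relation} and a correspondingly relaxed exponent bound are needed) before any proof can go through.
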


Combining \eqref{eq: merged cluster bound collect} with \eqref{eq: cluster number edge relation}, \eqref{eq: cluster merge trifurcation relation} and \eqref{eq: dtri in extra point number bound}, we get that \begin{align}
    &\sum_{C \in \mathcal C}\sum_{B\in \sB(A\cup W,C\cup W^*)}\P(\sigma_B(T-1)=-1)\cdot (\frac{\eps}{2})^{n-|C|}\cdot(1-\frac{\eps}{2})^{|C|+|W^*|}\nonumber\\\le~& (C_2\cdot (d-1)^{d^2+1}2^{(d^2+1)(d+6)}c_3^d)^{k}\eps^{|A|+k}\big((d-1)2^{d+6}c_3\big)^{\sum_{i=1}^{k}\Tri(A_i)}\nonumber\\
    &\times\big((d-1)2^{d+6}\big)^{\sum_{i=1}^{k}\Dtri(A_i)}\cdot (c_1c_2C_1)^{\frac{(d-1)k}{d}}.
    \label{eq: merge clusters enumeration final}
\end{align}

Recall Lemma \ref{lem: number of extra points}. Combining \eqref{eq: prime reduction} and \eqref{eq: merge clusters enumeration final}, we derive that
\begin{align}
&\P(\sigma_A(T)=-1)\nonumber\\\le~&\sum_{W^*\subset W}(1-\frac{\eps}{2})^{-|W^*|}\cdot(C_2\cdot (d-1)^{d^2+1}2^{(d^2+1)(d+6)}c_3^d)^{k}\eps^{|A|+k}\big((d-1)2^{d+6}c_3\big)^{\sum_{i=1}^{k}\Tri(A_i)}\nonumber\\&\cdot \big((d-1)2^{d+6}c_3\big)^{\sum_{i=1}^{k}\Dtri(A_i)}\cdot (c_1c_2)^{\frac{(d-1)k}{d}}\nonumber\\\le~& 4^k\cdot(C_3\cdot (d-1)^{d^2+1}2^{(d^2+1)(d+6)}c_3^d)^{k}\eps^{|A|+k}\big((d-1)2^{d+6}c_3\big)^{\sum_{i=1}^{k}\Tri(A_i)}\nonumber\\
&\times\big((d-1)2^{d+6}\big)^{\sum_{i=1}^{k}\Dtri(A_i)}\cdot (c_1c_2C_1)^{\frac{(d-1)k}{d}}.
\end{align}
Letting $c_2\ge (d-1)2^{d+6}c_3$, $c_3\ge (d-1)2^{d+6}$ and $c_1\ge \big(4C_3\cdot (d-1)^{d^2+1}2^{(d^2+1)(d+6)}c_3^d\big)^dc_2^{d-1}C_1^{d-1}$ and choose $\eps>0$ small enough (depending only on $d,c_1,c_2$ and $c_3$) completes the proof of \eqref{eq: minus separate product for suvivers}.

\section{Proof of Lemmas \ref{lem: enumeration induced relation} and \ref{lem: enumeration lemma final}}\label{sec: enumeration lemma proof}
\subsection{Proof of Lemma \ref{lem: enumeration induced relation}}
In this section, we prove Lemma \ref{lem: enumeration induced relation}.
Recall the definition of $G_A$ in Definition~\ref{def: G_A}. We start with a lemma that upper-bounds the number of branching vertices in $G_A$. Let $\Xi_p(G_A)$ denote the collection of degree $p$ vertices in $G_A$ and let $\xi^*(G_A)=\sum_{p=3}^d(p-2)|\Xi_p(G_A)|$. Recall that $\chi(A) = \Tri(A) + \Dtri(A)$
\begin{lem}\label{lem: d=3 trifurcation relation}
    Let $A$ be an odd cluster. Then we have $\xi^*(G_A)\le \chi(A)$. 
\end{lem}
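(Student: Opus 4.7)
The plan is to exploit the bipartite structure of $G_A$. By Definition~\ref{def: G_A}, the vertex set of $G_A$ is the disjoint union $A \sqcup B$, where $A$ is a collection of odd vertices and $B$ is the even cluster induced by $A$. I would split
\begin{equation*}
    \xi^*(G_A) = \sum_{\substack{v \in V(G_A),\\ \deg_{G_A}(v) \ge 3}} \bigl(\deg_{G_A}(v) - 2\bigr)
\end{equation*}
according to this bipartition, and argue that the even contribution is bounded by $\Tri(A)$ while the odd contribution is bounded by $\Dtri(A)$.

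For the even half, any $v \in B$ satisfies $v \notin A$ (by parity), and by the definition of the induced set $\deg_{G_A}(v) = |N(v) \cap A|$. Hence whenever $\deg_{G_A}(v) = p \geq 3$, the vertex $v$ is exactly a type $p$ single trifurcation of $A$ in the sense of the paper, so its contribution $p-2$ to $\xi^*(G_A)$ matches its contribution to $\Tri(A)$ term by term. For the odd half, the crucial set identity is
\begin{equation*}
    N(v) \cap B \;=\; \bigl\{y \in N(v) : N(y) \cap (A \setminus \{v\}) \ne \emptyset\bigr\} \qquad (v \in A),
\end{equation*}
which holds because $y \in N(v) \cap B$ means $|N(y) \cap A| \ge 2$ and $v$ is already one such neighbor, while conversely any $y \in N(v)$ with a neighbor in $A \setminus \{v\}$ has at least two neighbors ($v$ and that other vertex) in $A$, hence lies in $B$. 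Thus if $\deg_{G_A}(v) = p \ge 3$, the set $Y := N(v) \cap B$ witnesses $v$ as a type $p$ double trifurcation of $A$, so the contribution $p-2$ is absorbed by $\Dtri(A)$. Summing the two bounds yields $\xi^*(G_A) \le \Tri(A) + \Dtri(A) = \chi(A)$.

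The only real subtlety is bookkeeping around the convention for counting double trifurcations: the definition of ``type $p$ double trifurcation'' uses an existential quantifier, so the weighted sum $\Dtri(A) = \sum_p (p-2)\Dtri(A,p)$ implicitly assigns to each double trifurcation $v$ its maximal $p$; my argument is compatible with this because the $Y$ I exhibit has $|Y| = \deg_{G_A}(v)$, and taking the maximal witness can only raise the weight in $\Dtri(A)$, not lower it. Everything else is an immediate consequence of the definitions, so I do not anticipate any substantial obstacle.
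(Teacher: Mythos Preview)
Your proposal is correct and follows essentially the same approach as the paper's proof: split the branching vertices of $G_A$ by parity, identify even branching vertices of degree $p$ as type $p$ single trifurcations of $A$, and odd branching vertices of degree $p$ as type $p$ double trifurcations of $A$. Your write-up is slightly more explicit (the set identity for $N(v)\cap B$ and the remark on the existential quantifier in the double-trifurcation definition), but the argument is the same.
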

\begin{proof}
Recalling Definition~\ref{def: G_A}, we get that the collection of odd vertices in $V(G_A)$ is $A$ and the collection of even vertices in $V(G_A)$ is $B$ where $B$ is the collection of even vertices induced by $A$.
    Let $x$ be a degree $p~(p\ge 3)$ vertex in $G_A$ and let $y_1,\cdots,y_p$ be the neighbors of $x$ in $G_A$. If $x$ is an odd vertex, then we get that $y_i\in B$ and thus $N(y_i)\cap A\setminus\{x\}\neq \emptyset$. Therefore, $x$ is a type $p$ double trifurcation of $A$. 
    
    If $x$ is an even vertex, then $y_1,\cdots,y_p$ are odd vertices in $G_A$ and thus $y_1,\cdots,y_p\in A$. Therefore, $N(x)\subset A$ and we get that $x$ is a type $p$ single trifurcation of $A$. As a result, for any degree $p$ vertex $x$ in $G_A$, it is either a type $p$ single trifurcation or a type $p$ double trifurcation of $A$. Therefore we get that $\xi^*(G_A)\le \chi(A)$.
\end{proof}
\begin{lem}\label{lem: induced relation}
    Let $B\subset V(G)$ be an even cluster and let $C$ be the odd cluster induced by $B$. In addition, let $D$ be the even cluster induced by $C$.
    Then we have $D\subset B$. 
\end{lem}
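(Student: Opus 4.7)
The plan is to fix an arbitrary $x \in D$ and prove $x \in B$ by a contradiction argument that exploits the tree structure. By the definition of $D$, $x$ is even, $x \notin C$, and $x$ has at least two distinct neighbors $y_1, y_2 \in C$. Each $y_i \in C$ in turn has at least two neighbors in $B$. Suppose, for contradiction, that $x \notin B$. Then $x \notin N(y_i) \cap B$, so for each $i$ we can select $z_i \in N(y_i) \cap B$ with $z_i \neq x$.

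The key step is to invoke that $B$ is an even cluster: $z_1$ and $z_2$ are connected in $B$ via the double-neighboring relation. This yields a sequence $z_1 = w_0, w_1, \dots, w_m = z_2$ in $B$ with consecutive pairs $w_i, w_{i+1}$ at graph distance two. Because $G$ is a tree, each such pair shares a unique common (odd) neighbor $u_{i+1}$, so concatenation produces a walk $w_0, u_1, w_1, u_2, \dots, u_m, w_m$ in $G$ from $z_1$ to $z_2$. I would then observe that $x$ is a cut vertex separating $z_1$ from $z_2$: since $y_1 \neq y_2$ are both neighbors of $x$ and each $z_i$ is a neighbor of $y_i$ distinct from $x$, the unique tree path from $z_1$ to $z_2$ is $z_1 - y_1 - x - y_2 - z_2$. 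Hence any walk from $z_1$ to $z_2$ must pass through $x$, so $x$ equals some $w_i$ or $u_j$. Since $x$ is even and the $u_j$ are odd, we must have $x = w_i \in B$, contradicting our assumption.

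The one subtle point I would verify carefully is that the tree path from $z_1$ to $z_2$ really passes through $x$. This reduces to checking: $y_1 \neq y_2$ (built into $|N(x) \cap C| \geq 2$), $z_1 \neq z_2$ (otherwise $z_1 = z_2$ together with $y_1, y_2, x$ would form a 4-cycle in the tree), and $z_i \neq x$ (guaranteed by the contradiction hypothesis $x \notin B$, since $z_i \in B$). Once these facts are in hand, the cut-vertex property is immediate from tree topology and the remainder of the proof is routine. The main obstacle is thus purely the setup: packaging the definitions of "induced" and "even cluster" in the right way so that a single tree-path argument closes the gap.
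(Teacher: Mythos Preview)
Your proof is correct and follows essentially the same approach as the paper's: both argue by contradiction that if $x \in D \setminus B$, then removing $x$ disconnects the neighbors in $B$ of the two vertices $y_1, y_2 \in C \cap N(x)$, contradicting that $B$ is an even cluster. The paper states this disconnection directly in one line, while you spell it out explicitly via the unique tree path $z_1 - y_1 - x - y_2 - z_2$ and the resulting walk argument; the underlying idea is identical.
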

\begin{proof}
    If there exists a vertex $x\in D\setminus B$, let $y,z$ be two neighbors of $x$ in $C$. Since $x\notin B$ and $G$ is a tree, the vertices in $N(y)\cap B$ and the vertices in $N(z)\cap B$ are disconnected through the double-neighboring relation in $B$. Thus we get a contradiction to the fact that $B$ is an even cluster. 
\end{proof}
\begin{lem}\label{lem: induced relation enumeration bound}
       For an odd cluster $C$, let $D$ be the even cluster induced by $C$ and let $H$ be the graph restricted on $D\cup C$, i.e., $H=G|_{D\cup C}$. Define $\tilde{\sB}(C)$ to be the collection of even clusters $B$ such that the following hold.\begin{enumerate}
        \item $C$ is induced by $B$.
        \item There does not exist an even cluster $B'$ such that $C$ is induced by $B'$ and $B'\subsetneq B$.
    \end{enumerate} Then we have that $|\tilde{\sB}(C)|\le (d-1)^{2+\xi^*(H)}$. 
\end{lem}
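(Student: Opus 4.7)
The strategy is to exploit Lemma~\ref{lem: induced relation} to pin down $D\subset B$, and then show that for minimal $B$ the remainder $E:=B\setminus D$ is essentially parametrized by independent local choices at the leaves of $H$ that lie in $C$. I would first dispose of the degenerate case $|C|=1$: then $D=\emptyset$ and $\xi^*(H)=0$, and minimality forces $B$ to be any 2-element subset of $N(x)$ (such a $B$ is automatically an even cluster, and induces only $\{x\}$ because in a tree two neighbors of $x$ have no other common neighbor), giving $\binom{d}{2}\le(d-1)^2$ choices.

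For $|C|\ge 2$, I would first note that $H=G|_{D\cup C}$ is a tree: any two double-neighboring vertices of $C$ share an even neighbor which necessarily lies in $D$, and every $y\in D$ is attached to $C$ by definition. Writing $B=D\cup E$ with $E\cap D=\emptyset$, I would establish, under the minimality of $B$, two structural claims:
(a) every $y\in E$ has exactly one neighbor in $C$---at most one by the definition of $D$, and at least one because otherwise every odd neighbor $x''$ of $y$ lies outside $C$, hence $|N(x'')\cap B|=1$ (since $B$ induces exactly $C$), so $y$ has no double-neighbors in $B$, contradicting connectivity given $|D|\ge 1$ and hence $|B|\ge 2$;
(b) $|E\cap N(x)|=0$ for $x\in C$ with $d_H(x)\ge 2$, and $|E\cap N(x)|=1$ for $x\in C$ with $d_H(x)=1$.

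The main work lies in (b). If some $x\in C$ satisfies $|N(x)\cap B|\ge 3$, I would pick any $y\in E\cap N(x)$ and show that $B\setminus\{y\}$ is a strictly smaller even cluster still inducing $C$, contradicting minimality. It still induces $C$: the count at $x$ drops to $\ge 2$, by (a) no other vertex of $C$ is a neighbor of $y$, and every other odd neighbor $x''$ of $y$ lies outside $C$ and already had $|N(x'')\cap B|=1$. Connectivity is preserved because the only double-neighbors of $y$ in $B$ lie in $N(x)\cap B$ (for any $x''\in N(y)\setminus\{x\}$ we have $N(x'')\cap B=\{y\}$, so $y$ has no double-neighbors through $x''$), and $(N(x)\cap B)\setminus\{y\}$ still has $\ge 2$ elements, all pairwise double-neighbors through $x$, so $y$ is not a cut vertex.

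Given (a) and (b), each $B\in\tilde{\sB}(C)$ is determined by a choice of $y_x\in N(x)\setminus D$ for each $x\in L:=\{x\in C:d_H(x)=1\}$, which yields $|\tilde{\sB}(C)|\le(d-1)^{|L|}$. I would then finish via the handshaking identity on the tree $H$: since $d_H(y)\ge 2$ for $y\in D$, all leaves of $H$ lie in $C$, and $\sum_v(d_H(v)-2)=-2$ combined with the definition of $\xi^*$ gives $\xi^*(H)-|L|=-2$, so $|L|=\xi^*(H)+2$. The main obstacle is the connectivity check in (b): it crucially uses that $G$ is a tree, which forces common neighbors through any single odd vertex to form a clique in the double-neighbor graph and thereby prevents $y$ from being a cut vertex.
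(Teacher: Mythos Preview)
Your proposal is correct and follows essentially the same approach as the paper: both establish that a minimal $B$ equals $D$ together with exactly one extra neighbor per leaf of $H$, and then bound the number of leaves by $2+\xi^*(H)$. Your direct structural claims (a) and (b), proved by removing a superfluous $y\in E$ and checking connectivity, replace the paper's slicker injectivity argument for the map $\phi(B,x)=\min\bigl((N(x)\setminus D)\cap B\bigr)$; your separate treatment of $|C|=1$ is a welcome addition, since the paper's proof tacitly assumes $D\neq\emptyset$ when it asserts that every leaf of $H$ has a neighbor in $D$.
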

\begin{proof}
By Lemma~\ref{lem: induced relation}, we get that $D\subset B$. It suffices to consider $B\setminus D$.
Let $L$ denote the collection of leaves in $H$, and for any $x\in L$ let $M_x=N(x)\setminus D$. Note that $L\subset C$ since any vertex in $D$ has at least two neighbors in $C$ by definition. We define a map $\phi$ from $\tilde{\sB}(C)$ to $\bigotimes_{x\in L}M_x$, as follows (so each $\phi(B)$ is a vector $(\phi(B, x))_{x\in L}$). For any $B\in \tilde{\sB}(C)$ and $x\in L$, let $\phi(B,x)$  be the vertex with the smallest order in $M_x\cap B$; this is well-defined since from $x\in C$ we have  $|N(x)\cap B|\ge 2$, $|N(x)\cap D|=1$ and thus $M_x\cap B\neq \emptyset$. We now prove that $\phi$ is injective. To this end, let $M_\phi = \cup_{x\in L} \phi(B, x)$ and let $B'= M_\phi \cup D$. By the definition of $\phi$, we have that $B'\subset B$. Furthermore, for any $x\in L$, it has one neighbor in $D$ since otherwise $C$ cannot be an odd cluster. In addition, we have added at least another neighbor of $x$ to $B'$. Thus, $x$ has at least two neighbors in $B'$  (this obviously holds for $x\in C\setminus L$). This implies that $B'$ induces a set that contains $C$. Hence by the minimality of $B$, we get that $B=B'$, and therefore we get that $\phi$ is injective. Thus, $|\tilde{\sB}(C)|$ is upper-bounded by the number of images of $\phi$. Since $|M_x|\leq d-1$, we then see that $|\tilde{\sB}(C)| \leq (d-1)^{|L|}$. Note that $H$ is a tree with maximal degree $d$ and thus $|L| \leq 2 +\sum_{p=3}^d(p-2)|\Xi_p(H)|=2+\xi^*(H)$. Hence we complete the proof of Lemma~\ref{lem: induced relation enumeration bound}.
\end{proof}
\begin{proof}[Proof of Lemma \ref{lem: enumeration induced relation}]
    Fix a sequence of disjoint odd clusters $\mathsf{C}=(C_1,\cdots,C_{r_1+r_2})$ and let $B\in \frB(A,\mathsf{C})$. Suppose that $B$ is a disjoint union of non-adjacent even clusters $B_1, \ldots, B_{r_1 +r_2}$ where $B_i$ induces $C_i$. 
    Let $D_i$ be the even cluster induced by $C_i$ and let $H_i$ denote the graph restricted on $C_i\cup D_i$.
    By the minimality of $B$, we get that $B_i\in \tilde{\sB}(C_i)$.  Then by Lemmas~\ref{lem: d=3 trifurcation relation} and \ref{lem: induced relation enumeration bound}, $$|\frB(A,\mathsf{C})| \leq \prod_{i=1}^{r_1+r_2}(d-1)^{2+|\xi^*(H_i)|}\le (d-1)^{2r_1+2r_2+\xi^*(G_A)}\le (d-1)^{2r_1+2r_2+\chi(A)},$$ completing the proof of the lemma by noting that $\sum_{i=1}^{r_1+r_2}\xi^*(H_i) \leq \xi^*(G_A) $ since $\cup V(H_i) \subset V(G_A)$.
\end{proof}
\subsection{Proof of Lemma~\ref{lem: enumeration lemma final}: A simple case}
In order to prove Lemma \ref{lem: enumeration lemma final}, we first consider a special case: $A$ is an odd line and we do not distinguish the large and small odd clusters. We start with some definitions.
\begin{defi}
    Let $A$ be an odd cluster and fix a depth-first search of $A$ starting from a leaf vertex. As before we give the vertices in $A$ an order according to their first appearance in the depth-first search.
Let $\tilde\sC(A,r,m)$ denote the collection of  sequences of disjoint odd clusters $(C_1,\cdots,C_{r})$ satisfying the following properties.\begin{enumerate}
\item $\sum_{i=1}^r|C_i|=m$.
    \item For any $1\le i\le r$ we have $C_i\subset A$, and for any $i< j$ the order of $C_i$ is smaller than the order of $C_j$.
\end{enumerate}
\end{defi}

\begin{defi}\label{def: white black cluster}
    For $(C_1, \ldots, C_r) \in \tilde\sC(A, r, m)$, we define $D_i$ to be the even cluster induced by $C_i$. We call each $C_i\cup D_i$ a white component and we call the connected components in $V(G_A)\setminus\cup_{i=1}^r(C_i\cup D_i)$ black components. For either a white or black component, we define its \textbf{capacity} to be the number of odd points in the component.
\end{defi}
\begin{rmk}
    In Definition~\ref{def: white black cluster}, it is possible that a black component has capacity $0$ because possibly it only contains even vertices.
\end{rmk}

\begin{lem}\label{lem: enumeration lemma notri}
    Suppose that $A$ is an odd line with $n$ vertices. 
Then we have \begin{equation}\label{eq: enumeration lemma no tri}
    |\tilde\sC(A,r,m)|\le 2\cdot\binom{m-1}{r-1}\binom{n-m+r}{r}.
\end{equation}
\end{lem}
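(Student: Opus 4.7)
My plan is to reduce this count to a straightforward stars-and-bars enumeration, exploiting the fact that $A$ is a line.

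First I would note that a depth-first search of an odd line starting from one of its two leaves visits the $n$ vertices in the natural linear order; relabel them $v_1, v_2, \ldots, v_n$ accordingly, so that $v_i$ and $v_{i+1}$ are double-neighbors. Since each $C_i$ is an odd cluster contained in $A$, it is connected under the double-neighboring relation; as $A$ is a path in this relation, $C_i$ must be a contiguous sub-interval $\{v_{a_i}, v_{a_i+1}, \ldots, v_{b_i}\}$ for some $1 \le a_i \le b_i \le n$. Hence a sequence $(C_1, \ldots, C_r) \in \tilde\sC(A,r,m)$ is the same data as a sequence of disjoint intervals of $[n]$ with total length $m$, ordered by their largest element.

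Next I would parametrize each such sequence by sizes and gaps. Set $s_i := b_i - a_i + 1 \ge 1$, so $\sum_{i=1}^r s_i = m$. Define $g_0 := a_1 - 1$, $g_i := a_{i+1} - b_i - 1$ for $1 \le i \le r-1$, and $g_r := n - b_r$. Disjointness $b_i < a_{i+1}$ together with $a_1 \ge 1$ and $b_r \le n$ gives $g_i \ge 0$ and $\sum_{i=0}^r g_i = n - m$. Conversely, any such pair $((s_i), (g_i))$ recovers the $(a_i, b_i)$'s, so the map is a bijection.

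Finally, applying stars-and-bars to each factor gives $\binom{m-1}{r-1}$ choices for $(s_1, \ldots, s_r)$ and $\binom{n-m+r}{r}$ choices for $(g_0, g_1, \ldots, g_r)$, so
\[
    |\tilde\sC(A,r,m)| \;=\; \binom{m-1}{r-1}\binom{n-m+r}{r} \;\le\; 2\binom{m-1}{r-1}\binom{n-m+r}{r}.
\]
I do not expect a real obstacle: the argument is a direct bijective count, and the factor of $2$ in the stated bound is slack that presumably becomes useful when this lemma is upgraded from lines to general odd clusters in the subsequent proof of Lemma~\ref{lem: enumeration lemma final}, where trifurcations force a DFS of $A$ to make nontrivial branching choices.
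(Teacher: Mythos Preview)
Your argument is correct and is essentially the paper's own proof: both reduce to stars-and-bars on the $r$ interval sizes and the $r+1$ gaps, yielding $\binom{m-1}{r-1}\binom{n-m+r}{r}$. The only cosmetic difference is that the paper phrases the gaps as capacities of ``black components'' in $G_A$ and carries an extra factor of $2$ for the color of the leading component, whereas your direct gap parametrization $(g_0,\ldots,g_r)$ makes that bit redundant and gives the exact count; the white/black language is introduced there mainly as scaffolding for the general-cluster case.
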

\begin{proof}
Since black and white components appear alternatively and since all white components start and end at odd vertices, if we know the capacities of white and black components along the line (i.e., in the depth-first search order) and the color of the first component, then we can recover all white components and as a result recover $(C_1, \ldots, C_r)$.  Now, note that the number of ways to choose $|C_1|, \ldots, |C_r|$ is the same as the number of ways to split $m$ into $r$ positive integers and thus is $\binom{m-1}{r-1}$.
In addition, there can be at most $r+1$ black components, and the number of ways to choose their capacities is upper-bounded by the number of ways to split $n-m$ into $r+1$ non-negative integers, which is $\binom{n-m+r}{r}$. Altogether, this completes the proof.
\end{proof}
We next generalize Lemma \ref{lem: enumeration lemma notri} to the case when $A$ is an odd cluster.
\begin{prop}\label{prop: enumeration lemma step 1}
Suppose that $A$ is an odd cluster with $n$ vertices.
 Then we have \begin{equation}\label{eq: enumeration lemma step 1}
    |\tilde\sC(A,r,m)|\le \binom{m+r-1}{r-1}\cdot\binom{n-m+r+\xi^*(G_A)}{r+\xi^*(G_A)}\cdot 2^{\xi^*(G_A)}.
\end{equation}
\end{prop}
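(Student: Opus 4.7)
The plan is to prove the proposition by induction on $\xi^*(G_A)$, using Lemma~\ref{lem: enumeration lemma notri} (or a direct enumeration on the line) as the base case and splitting off a pendant path of $G_A$ at each inductive step.

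\medskip

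\noindent\emph{Base case $\xi^*(G_A) = 0$.} In this case $G_A$ is a path and $A$ is an odd line. A direct count---placing $r$ positive cluster capacities summing to $m$, together with $r+1$ gap capacities (the $r-1$ middle gaps $\ge 1$ and the two end gaps $\ge 0$) summing to $n-m$---gives
\[
|\tilde\sC(A,r,m)| = \binom{m-1}{r-1}\binom{n-m+1}{r} \le \binom{m+r-1}{r-1}\binom{n-m+r}{r}\cdot 2^0
\]
by monotonicity of binomial coefficients.

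\medskip

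\noindent\emph{Inductive step $\xi^*(G_A) \ge 1$.} I would pick a branching vertex $v$ of $G_A$ of maximal depth in the DFS tree; at least one subtree of $v$ in $G_A\setminus\{v\}$, call it $P$, is then a simple path containing no further branching. Set $G_A' = G_A \setminus (P\setminus\{v\})$ so that $\xi^*(G_A') = \xi^*(G_A) - 1$. I classify each element of $\tilde\sC(A,r,m)$ by the coloring of $v$: (A) $v$ is black, so the clusters on $P$ and on $G_A'$ are independent; or (B) $v$ is white, in which case at most one cluster (the one containing $v$) may span both $P$ and $G_A'$. In each case, letting $(r_P, m_P)$ denote the number of clusters and total size contributed by $P$ (suitably defined in (B) to account for the bridging cluster), I bound the $G_A'$-count by the inductive hypothesis and the $P$-count by the line case. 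Summing over the admissible $(r_P, m_P)$ and invoking the Vandermonde-type identity
\[
\sum_{a_1+a_2=a}\binom{a_1+b_1}{b_1}\binom{a_2+b_2}{b_2} = \binom{a + b_1 + b_2 + 1}{b_1 + b_2 + 1},
\]
applied separately to capacity and gap binomials, yields the claimed bound. The factor $2$ arises from the binary case split (A)/(B) at $v$, accumulating inductively to the $2^{\xi^*(G_A)}$ prefactor.

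\medskip

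\noindent\emph{Main obstacle.} The main technical difficulty is case (B): the bridging cluster must be counted exactly once among the $r$ clusters, while its capacity and connectivity contributions are split between $P$ and $G_A'$. Ensuring that the Vandermonde convolution aligns all the shifted binomial parameters precisely into $\binom{m+r-1}{r-1}$ and $\binom{n-m+r+\xi^*(G_A)}{r+\xi^*(G_A)}$---rather than accumulating stray factors of $r$ or $\xi^*$---and verifying that the $2^{\xi^*(G_A)}$ prefactor emerges cleanly from the case split, constitutes the main combinatorial hurdle.
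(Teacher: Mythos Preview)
Your inductive approach is genuinely different from the paper's. The paper does not induct on $\xi^*(G_A)$; it builds a single injective encoding $\psi=(\psi_1,\psi_2,\psi_3,\psi_4)$ of $\tilde\sC(A,r,m)$ and bounds the image directly. A leaf-cutting procedure peels off maximal black leaf-intervals (their capacities form $\psi_1$); the surviving tree $G_A^*$ is linearized by its DFS order to a path $\tilde G_A$; the capacities of the black and white components along $\tilde G_A$ are $\psi_2,\psi_3$; and $\psi_4$ records the colors of every branching vertex together with its neighbors. Injectivity of $\psi$ is Proposition~\ref{prop: injective}; the two binomial factors are the image bounds for $(\psi_1,\psi_2)$ and $\psi_3$ in Lemma~\ref{lem: enumeration 1}, and the exponential factor is the range of $\psi_4$. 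In effect, what your induction would handle locally at each branching vertex, the paper handles once globally through $\psi_4$, trading recursive bookkeeping for a careful injectivity argument.

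Your plan is plausible but the sketch understates two difficulties. First, when $v$ is even, its color is not intrinsic to either piece: $v$ is white iff two of its neighbors lie in the \emph{same} $C_i$, and those neighbors may straddle $P$ and $G_A'$; so the (A)/(B) split on the color of $v$ does not factor as a product, and you would need to refine the split (for instance on the color of the first $P$-vertex, or on whether the edge into $P$ joins two white vertices). Second, the Vandermonde alignment is not as clean as claimed: summing the capacity binomials over $m_P$ with $r_P$ fixed yields $\binom{m+r-r_P-1}{r-1}$ rather than $\binom{m+r-1}{r-1}$, and the gap binomials involve the same $m_P$, so the two convolutions cannot be performed independently. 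Extracting a factor of exactly $2$ per branching step from this double sum is optimistic. Incidentally, the paper's own Lemma~\ref{lem: enumeration 1} bounds $|\psi_4|$ by $2^{(d+1)\xi^*(G_A)}$, not $2^{\xi^*(G_A)}$, and it is that weaker constant that is actually fed into Lemma~\ref{lem: enumeration lemma final}; so even if your induction only delivers $C^{\xi^*(G_A)}$ for some $C>2$, that would suffice for the downstream application.
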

The proof of Proposition \ref{prop: enumeration lemma step 1} is inspired by that of Lemma \ref{lem: enumeration lemma notri}, for which we wish to construct a line graph out of $G_A$ as follows.

\begin{defi}\label{def: psi}
Let $\mathsf{C}=(C_1,\cdots,C_r)$ be a given sequence of disjoint odd clusters. 

First, we employ the following \textbf{leaf-cutting procedure}. Let $G_A^0=G_A$. Inductively for each $t\geq 0$, we let $x$ be the first black leaf of $G_A^t$ and let $B_x$ be the maximal black cluster containing $x$ but not containing a branching vertex in $G_A^t$; then we set  $G_A^{t+1}=G_A^t\setminus B_x$. We stop this inductive procedure until all leaves in $G^s_A$ are white (say at some time $s$), and we denote $G^*_A = G^s_A$. In addition, let $\psi_1(\mathsf{C})$ be the sequence of capacities for the clusters we removed for the leaves in $G_A$. Here we use the convention that if $v$ is a white leaf in $G_A$, then the value for $v$ in $\psi_1(\mathsf{C})$ is 0.

Next, we wish to construct a map $\Psi$ from $G_A^*$ to a line $\tilde G_A$, which preserves the color and the parity of a vertex. 
To this end, we let $\tilde G_A$ be a line graph with the same number of vertices in $G^*_A$, and order the vertices in $\tilde G_A$ by the depth-first search thereon. Then naturally we can let $\Psi$  map each vertex $u\in V(G^*_A)$ to a vertex $v\in V(\tilde G_A)$ such that their orders in respective graphs are identical.  For $v\in V(\tilde G_A)$, we say it is black (white) in $\Tilde{G}_A$ if and only if $\Psi^{-1}(v)$ is in a black (white) component in $G_A^*$ and we say it is an odd (even) vertex in $\Tilde{G}_A$ if and only if $\Psi^{-1}(v)$ is an odd (even) vertex.
Note that, somewhat artificially, in $\tilde G_{A}$ we may have two even (odd) vertices neighboring each other. In addition, we consider the connected components in $\tilde G_A$ composed of black and white vertices and we define the capacity of each such component to be the number of odd vertices in that component. See Figure~\ref{fig: map psi} for an illustration.

In what follows, all the sequences are in order according to the maximal order in $G_A$. We define $\psi_2(\mathsf{C})$ to be the sequence of capacities of black components in $\Tilde{G}_A$ and we define $\psi_3(\mathsf{C})$  to be the sequence of capacities of white components in $\Tilde{G}_A$. Furthermore, we define $\psi_4(\mathsf{C})$ to be the sequence of colors of all the branching vertices and their neighbors in $G_A$.
Now we get the mapping $\psi(\mathsf{C})=\Big(\psi_1(\mathsf{C}),\psi_2(\mathsf{C}),\psi_3(\mathsf{C}),\psi_4(\mathsf{C})\Big)$. 
\end{defi}
\begin{figure}[ht]
\centering  
\subfloat[$G_A$]{
\includegraphics[width=0.2\linewidth]{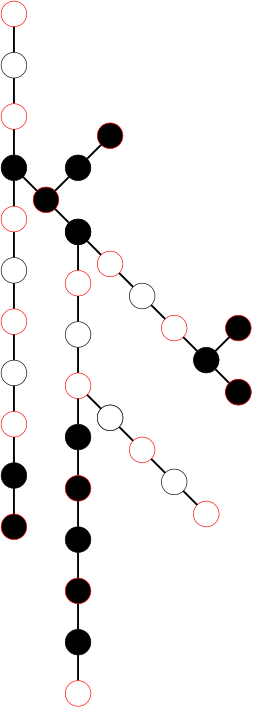}}\hspace{0.4\linewidth}
\subfloat[$G_A^*$]{
\includegraphics[width=0.25\linewidth]{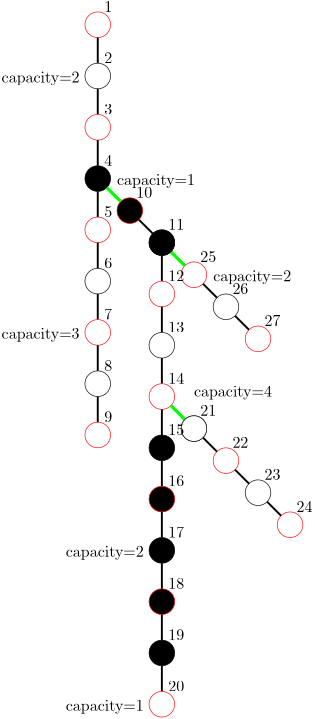}}
\caption{An illustration of ${G}_A$ and $G_A^*$. The vertices with red boundaries: odd vertices.  The vertices with black boundaries: even vertices. The numbers near the vertices are their orders in $G_A^*$. The green lines: the edges between a branching vertex and its far neighbor.}
\label{fig: cutting the leaves}
\end{figure}
\begin{figure}[ht]
\centering  
\includegraphics[width=0.9\linewidth]{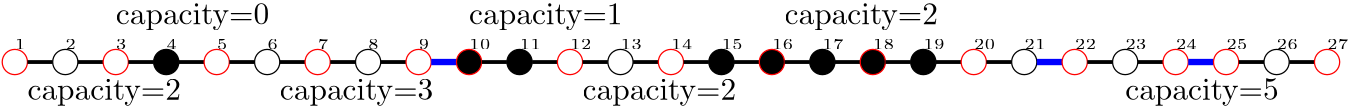}
\caption{An illustration of $\tilde{G}_A$. The blue lines: the edges that are not in $G_A^*$.}
\label{fig: map psi}
\end{figure}

The following two propositions give the properties of $\Psi$ and $\psi$. 
\begin{prop}\label{prop: Psi interpretation}
Let $A$ be an odd cluster. We construct a graph $H$ from $G^*_A$ by performing the following procedure for each branching vertex $x\in G^*_A$:
    let $p$ be the degree of $x$ and let $y_x^1,\cdots,y_x^{p-2}$ (note that $p\ge 3$) be the far neighbors of $x$; for any $1\le s\le p-2$, let $l_x^s$ be the order of $y_x^s$; and let $z_x^s$ be the vertex  with order $l_x^s-1$;  remove the edge $\{x,y_x^s\}$ and add the edge $\{y_x^s,z_x^s\}$. (See the green edges in Figure~\ref{fig: cutting the leaves} (b) and blue edges in Figure~\ref{fig: map psi} for an illustration.) Then we have $H=\tilde{G}_A$.
\end{prop}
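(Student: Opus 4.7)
The plan is to compare the edge sets of $H$ and $\tilde{G}_A$ directly, using the structure of the depth-first search on $G_A^*$. Let $v_1, v_2, \ldots$ denote the vertices of $G_A^*$ listed in their order of discovery by the depth-first search. The essential ingredient is the following standard property of DFS on a tree: for each $i$, either $v_{i+1}$ is a child of $v_i$ in the DFS tree (so $\{v_i, v_{i+1}\} \in E(G_A^*)$), or else the search backtracks from $v_i$ to some strict ancestor $x$ and then discovers an unvisited neighbor of $x$. In the latter case $v_{i+1}$ is adjacent to $x$ in $G_A^*$, and moreover $x$ has at least three incident edges in $G_A^*$ (the parent via which $x$ was first entered, the child whose subtree has just been completed, and the still-unvisited edge to $v_{i+1}$), so $x$ is a branching vertex of $G_A^*$. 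Since the vertex $x_+$ is discovered immediately after $x$, the neighbor $v_{i+1}$ must be one of the far neighbors $y_x^s$ of $x$, and by the definition in Definition~\ref{def: psi} the vertex $v_i$ is then precisely $z_x^s$.

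With this in hand, I would first classify the edges of $G_A^*$. For each edge $\{u,w\}$ with $\mathrm{ord}(u) < \mathrm{ord}(w)$, the DFS either enters $w$ from $u$ at the step right after discovering $u$ (in which case $\mathrm{ord}(w) = \mathrm{ord}(u) + 1$), or else enters $w$ only after backtracking from some descendant of $u$; the DFS fact above then forces $u$ to be a branching vertex with $w$ a far neighbor, i.e., $\{u,w\} = \{x, y_x^s\}$ for some branching $x$ and some $s$. Hence the set of edges removed in the construction of $H$ is precisely the set of edges of $G_A^*$ that do not join consecutive orders. On the other side, each edge $\{v_i, v_{i+1}\}$ of $\tilde{G}_A$ that is not already an edge of $G_A^*$ must correspond to a backtracking step, and so by the same DFS fact equals $\{z_x^s, y_x^s\}$ for the corresponding branching vertex $x$ and far-neighbor index $s$. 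Thus the edges added in the construction of $H$ are exactly the edges of $\tilde{G}_A$ that are absent from $G_A^*$. Combining the two observations yields $E(H) = E(\tilde{G}_A)$, and since both graphs share the vertex set $V(G_A^*)$, this gives $H = \tilde{G}_A$.

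The main obstacle will be making the DFS analysis fully rigorous and, in particular, verifying that the assignment $(x,s) \mapsto \{x, y_x^s\} \mapsto \{z_x^s, y_x^s\}$ is a bijection between the ``non-consecutive'' edges of $G_A^*$ and the edges of $\tilde{G}_A$ that are missing from $G_A^*$, so that the simultaneous removal-and-addition operation is well defined (no edge is created twice or deleted twice, and the resulting graph is simple). This ultimately reduces to the observation that each backtracking event in the DFS is charged to a unique branching ancestor together with a unique unexplored subtree, so that distinct pairs $(x,s)$ produce distinct vertex pairs $(y_x^s, z_x^s)$.
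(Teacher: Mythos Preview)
Your proposal is correct and follows essentially the same approach as the paper: both arguments compare the edge sets of $H$ and $\tilde G_A$ by determining exactly which edges of $G_A^*$ join vertices of consecutive order and which do not, and by identifying the non-consecutive edges with the pairs $\{x,y_x^s\}$ and the ``missing'' consecutive pairs with $\{z_x^s,y_x^s\}$. The paper carries this out by a direct case analysis on pairs in the symmetric difference $E(G_A^*)\,\triangle\, E(\tilde G_A)$, whereas you phrase the same dichotomy through the DFS tree (parent/child versus backtracking step); your framing makes the role of the branching ancestor and the identification $v_i=z_x^s$, $v_{i+1}=y_x^s$ slightly more transparent, and your closing remark about injectivity of $(x,s)\mapsto\{z_x^s,y_x^s\}$ addresses a well-definedness point the paper leaves implicit, but the substance of the two proofs is the same.
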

\begin{proof}
Let $p\ge 3$.
    For each degree $p$ vertex $x\in G^*_A$, since the orders of $x$ and $y_x^s$ are not consecutive, they are not neighbors in $\tilde{G}_A$; since $y_x^s$ and $z_x^s$ have neighboring orders, they are neighbors in $\tilde{G}_A$. 
    In conclusion, if $x, y$ are neighbors in both $G^*_A$ and $\tilde G_A$ or in neither of them, then clearly the removing and adding procedure does not change the neighboring relation between $x, y$. Now it suffices to consider the pairs $x, y$ such that they are neighbors in either $G^*_A$ or $\tilde G_A$.

    First, we consider neighbors $x,y$ in $G_A^*$ that are not neighbors in $\tilde{G}_A$. Assume that $x$ has order $l$ and $l$ is smaller than the order of $y$. Since they are not neighbors in $\tilde{G}_A$, their orders are not consecutive. Since the order $1$ vertex in $G_A^*$ is a leaf, we get that $l>1$. Thus two neighbors of $x$ in $G^*_A$ (with order $l-1$ and $l+1$, respectively) are also neighbors of $x$ in $\tilde G_A$, and thus $y$ is yet another neighbor of $x$ in $G^*_A$. Thus we get that $x$ is a branching vertex in $G_A^*$ and $y$ is a far neighbor of $x$, and thus $x,y$ are not neighbors in $H$.

    Next, we consider neighbors $x,y$ in $\tilde{G}_A$ which are not neighbors in $G_A^*$. Assume that $x$ has order $l$ and $y$ has order $l+1$. Since  $x$ and $y$ are not neighbors in $G_A^*$, we get that $x$ is a leaf of $G_A^*$. Furthermore, by the definition of depth-first search, we get that $y$ is the far neighbor of a branching vertex in $G_A^*$, and thus $x, y$ are neighbors in $H$. 
    
Altogether, this completes the proof of the lemma.
    \end{proof}
\begin{prop}\label{prop: injective}
    The map $\psi$ is injective.
\end{prop}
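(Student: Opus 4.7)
The plan is to prove injectivity by describing an explicit inversion: given $\psi(\mathsf{C}) = (\psi_1, \psi_2, \psi_3, \psi_4)$ and the a priori known graph $G_A$, I will reconstruct $\mathsf{C}$ step by step. First, $G_A^*$ is recovered from $G_A$ and $\psi_1$: the black cluster removed at each leaf $x$ of $G_A$ is the maximal subtree at $x$ free of branching vertices, so its capacity $\psi_1(x)$ tells me exactly how far to walk inward from $x$ before stopping (walk until either the prescribed number of odd vertices has been swept up, or a branching vertex of $G_A$ is encountered). After peeling off all these cut subtrees I obtain $G_A^*$. By Proposition~\ref{prop: Psi interpretation}, the line $\tilde{G}_A$ and the bijection $\Psi$ are then uniquely determined from $G_A^*$ via the edge-swap construction and the matching of depth-first orders.

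The heart of the plan is to recover the two-coloring of $\tilde{G}_A$. The leaf-cutting procedure guarantees that every leaf of $G_A^*$ is white, so the order-$1$ vertex of $\tilde{G}_A$ is white; this fixes the starting color. Walking along $\tilde{G}_A$, colors form alternating maximal runs whose capacities (i.e.\ numbers of odd vertices) are specified, in order, by the two streams $\psi_3$ and $\psi_2$. The subtle point is that a run may end on an even vertex that contributes nothing to the capacity budget, so the precise location of each color switch is a priori ambiguous. Here $\psi_4$ is essential: it records the colors of every branching vertex of $G_A$ together with all its neighbors, and by Proposition~\ref{prop: Psi interpretation} these are exactly the positions in $\tilde{G}_A$ where the edge-swap can introduce a color transition not forced by capacity. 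With these anchors the coloring of $\tilde{G}_A$ is determined by an induction along the depth-first order that tracks the currently open color together with its remaining capacity. Pulling this coloring back via $\Psi^{-1}$ colors $G_A^*$, and declaring every vertex in a cut-off subtree to be black extends the coloring to all of $G_A$.

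Once $G_A$ is colored, the white odd vertices form $\bigcup_{i=1}^{r} C_i$, and decomposing this set into connected components under the double-neighboring relation and ordering them by their depth-first order in $G_A$ recovers the sequence $(C_1, \ldots, C_r)$. The main obstacle I anticipate is the coloring recovery in $\tilde{G}_A$: one must turn the heuristic ``capacities plus $\psi_4$ anchors determine every color'' into a clean induction, which requires identifying precisely which vertices of $\tilde{G}_A$ could be ambiguous in the absence of $\psi_4$ and verifying, using the explicit edge-swap in Proposition~\ref{prop: Psi interpretation}, that the anchor data resolves every such ambiguity. A secondary technicality is that two clusters $C_i, C_j$ in the sequence might be adjacent odd clusters, in which case they would otherwise merge during the last decomposition; this is handled by observing that adjacency forces a common double-neighbor of some $x \in C_i$ and $y \in C_j$ to lie in the even cluster induced by $C_i \cup C_j$ but in neither $D_i$ nor $D_j$, so that merging two clusters would already force a different coloring of $G_A$ and hence cannot produce the same value of $\psi$.
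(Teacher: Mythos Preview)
Your overall strategy matches the paper's: recover $G_A^*$ from $\psi_1$ via the leaf-cutting data, obtain $\tilde G_A$ from $G_A^*$ using Proposition~\ref{prop: Psi interpretation}, reconstruct the two-coloring of $\tilde G_A$ from $(\psi_2,\psi_3,\psi_4)$, and read off the $C_i$. The paper organizes the coloring step a bit differently (it first recovers the color of every \emph{odd} vertex from the alternating run capacities, then determines each even vertex by casework on its two neighbors in $\tilde G_A$, invoking $\psi_4$ only when the even vertex lies in $J\cup N(J)$), but your inductive-walk description is aiming at the same conclusion.

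Your treatment of the ``secondary technicality'', however, contains a wrong claim. You assert that if $C_i,C_j$ are adjacent then any common even neighbor $v$ of some $x\in C_i$, $y\in C_j$ lies in neither $D_i$ nor $D_j$. This fails already for the star $G_A$ with center $v$ and leaves $a_1,a_2,a_3$: with $C_1=\{a_1,a_2\}$, $C_2=\{a_3\}$ one has $v\in D_1$, so $v$ is white and the two white ``components'' are adjacent. Indeed $\psi$ takes the \emph{same} value on $(\{a_1,a_2\},\{a_3\})$ and on $(\{a_1\},\{a_2,a_3\})$: every vertex is white, $\psi_1=(0,0,0)$, $\psi_2$ is empty, $\psi_3=(3)$, and $\psi_4$ records all-white. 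The paper's proof glosses over this same step, simply asserting that the coloring determines $(C_1,\ldots,C_r)$. The correct resolution is that such $\mathsf C$'s do not occur in the application: whenever $\mathsf C$ arises from some $B\in\frB(A,\mathsf C)$, Lemma~\ref{lem: induced relation} gives $D_i\subset B_i$, and then $v\in D_i$ would force $\dist(B_i,B_j)\le 2$ through $y$, contradicting non-adjacency of the $B_i$'s. Hence on the domain actually used, every such $v$ is black and the white connected components in $G_A$ coincide with the $C_i\cup D_i$, which is what both your argument and the paper's need.
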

\begin{proof}
Recall that $A$ is viewed as a fixed odd cluster (so it is known to us).
    Suppose that $\psi(C_1,\cdots,C_r)=(\mathbf{B}_1,\mathbf{B}_2,\mathbf{W},\mathbf{R})$. It suffices to recover $C_1,\cdots,C_r$ from $\mathbf{B}_1,\mathbf{B}_2,\mathbf{W},\mathbf{R}$. Note that a leaf of $G_A$ is black if and only if it has a non-zero value in $\mathbf{B}_1$. Thus we can recover the black leaves of $G_A$. Let $x_1,\cdots,x_s$ be the black leaves of $G_A$ in order and let $d_i$ be value of $x_i$ in $\mathbf{B}_1$.
    Let $G_A^0=G_A$. Recall Definition~\ref{def: psi}. We want to recover $G_A^t$ inductively. For $1\leq t\leq s$, we let $\hat{B}_{x_t}$ be the cluster containing $x_t$ with $2d_t+1$ vertices in $G_A^{t-1}$, and we next explain why such $\hat{B}_{x_t}$ is unique. Since the leaf-cutting procedure will not create new black leaves, we get that the first black leaf in $G_A^{t-1}$ is $x_t$. Since the cluster $B_{x_t}$ as in Definition \ref{def: psi} does not contain a branching vertex of $G_A^{t-1}$ and has $2d_t+1$ vertices, we get that any legitimate $\hat{B}_{x_t}$ does not contain a branching vertex and thus is unique. Furthermore, we have $\hat{B}_{x_t}=B_{x_t}$. Then we get from Definition~\ref{def: psi} that $G_A^t=G_A^{t-1}\setminus \hat{B}_{x_t}$.

  Since $\Psi$ depends only on $G_A$ and $G_A^*$, we can recover $\tilde{G}_A$ from  $G_A^*$. Furthermore, we can recover whether a vertex in $\tilde{G}_A$ is odd or not. In order to recover the white components in $G_A^*$, it suffices to recover the color of all the vertices in $\tilde{G}_A$. Note that $\tilde{G}_A$ is a line and $\mathbf{B}_2,\mathbf{W}$ are the capacities of black and white components in $\tilde{G}_A$. Since black and white components appear alternatively in $\tilde{G}_A$ and the first and last component in $\tilde{G}_A$ is white, we can recover the color of all the odd vertices in $\tilde{G}_A$ from $\mathbf B_2$ and $\mathbf W$. Since a leaf in $\tilde{G}_A$ is still a leaf in $G_A$ and all leaves in $G_A$ are white, we get that any even vertex in $\tilde{G}_A$ has two neighbors. For an even vertex $x$ in $\tilde{G}_A$ with two neighbors $y,z$, in the case that $y, z$ have the same color, we have the following: if $y, z$ are in the same colored component, then $x$ has the same color as them; if $y$ and $z$ are in different colored components, then $x$ has a different color from them. We next consider the case when $y, z$ are of different colors. Since $x$ is even, we get that $\Psi^{-1}(x)$ is not a leaf. If $\Psi^{-1}(x)$ is a branching vertex in $G_A^*$ or $N(\Psi^{-1}(x))$ contains a branching vertex in $G_A^*$, then we get its color from $\mathbf{R}$. If $\Psi^{-1}(x)$ has degree less than three in $G_A^*$ and it is not a neighbor of a branching vertex, then the neighbors of $\Psi^{-1}(x)$ in $G_A$ have neighboring orders to $\Psi^{-1}(x)$. 
  Recalling that $y,z$ are the two neighbors of $x$ in $\tilde{G}_A$, we get that they have neighboring orders to $x$ in $G_A$ and thus we see that $\Psi^{-1}(y)$ and $\Psi^{-1}(z)$ are the two neighbors of $\Psi^{-1}(x)$ in $G_A^*$. Hence we get that $\Psi^{-1}(x)$ has only one white neighbor in  $G_A^*$. Since an odd vertex is white if and only if it is in $C$ and an even vertex is white if and only if it is in $D$ (i.e., the set induced by $C$), we get that $\Psi^{-1}(x)$
  does not have two neighbors in $C$ and this means $\Psi^{-1}(x)$ is black in $G_A^*$ (since $\Psi^{-1}(x)$ cannot be induced by $C$). Therefore, we recover the colors of all vertices in $\tilde{G}_A$.
  
In conclusion, we can recover $(C_1,\cdots,C_r)$ from $(\mathbf{B}_1,\mathbf{B}_2,\mathbf{W},\mathbf{R})$ and thus the map $\psi$ is injective.
\end{proof}
Since $\psi$ is injective, it suffices to bound the enumeration of images of $\psi$.
\begin{lem}\label{lem: enumeration 1}
Let $A$ be an odd cluster with $n$ vertices. Let $r,m$ be  non-negative integers. Then we have the following.
    \begin{enumerate}
        \item $\Big|\Big\{\big(\psi_1(\mathsf{C}),\psi_2(\mathsf{C})\big): \mathsf{C} \in\tilde\sC(A,r,m)\Big\}\Big|\le\binom{n-m+r+\xi^*(G_A)}{r+\xi^*(G_A)}$.\label{item: 2}
        \item $\Big |\psi_3(\tilde\sC(A,r,m))\Big |\le\binom{m+r-1}{r-1}$.\label{item: 3}
        \item $\Big |\psi_4(\tilde\sC(A,r,m))\Big |\le2^{(d+1)\xi^*(G_A)}$.\label{item: 4}
    \end{enumerate}
\end{lem}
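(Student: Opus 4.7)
The plan is to establish each of the three bounds by direct combinatorial counting, leveraging the standard tree identity that a tree with maximum degree at most $d$ has exactly $2 + \xi^*(G_A)$ leaves together with at most $\xi^*(G_A)$ branching vertices (obtained from the handshake lemma together with the definition of $\xi^*$).

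For item (1), the guiding principle will be odd-vertex conservation. The $n$ odd vertices of $G_A$ partition into three groups: those lying in the white components (contributing exactly $m$), those in the black leaf-clusters removed during the leaf-cutting procedure (contributing $\sum \psi_1$), and those in the black components of $\tilde{G}_A$ (contributing $\sum \psi_2$). This yields $\sum \psi_1 + \sum \psi_2 = n - m$. Next I would pin down the lengths. Since $\psi_1$ is indexed by the leaves of $G_A$, its length equals $2 + \xi^*(G_A)$. For $\psi_2$ I claim the length is exactly $r - 1$: the depth-first search on $G_A^*$ starts (and, on a tree, also ends) at a leaf of $G_A^*$, and all leaves of $G_A^*$ are white by the stopping rule of the leaf-cutting procedure; since $\Psi$ preserves color, the line $\tilde{G}_A$ begins and ends with white vertices, so the $r$ white components are separated by exactly $r - 1$ black components. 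Hence the concatenation of $\psi_1$ and $\psi_2$ is a non-negative integer sequence of length $r + 1 + \xi^*(G_A)$ whose entries sum to $n - m$, and the stars-and-bars count yields the stated bound $\binom{n - m + r + \xi^*(G_A)}{r + \xi^*(G_A)}$.

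For item (2), $\psi_3$ is the ordered tuple of capacities $|C_1|, \ldots, |C_r|$ of the white components of $\tilde{G}_A$ (leaf-cutting removes only black vertices, so the white components of $G_A^*$ are precisely the original $C_i \cup D_i$, and $\Psi$ is color-preserving). These are strictly positive integers summing to $m$, so the number of possibilities is $\binom{m-1}{r-1}$, which is trivially bounded by $\binom{m + r - 1}{r - 1}$.

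For item (3), I would first bound the number of branching vertices of $G_A$ by $\sum_{p \ge 3}|\Xi_p(G_A)| \le \xi^*(G_A)$, since each degree-$p$ branching vertex contributes $p - 2 \ge 1$ to $\xi^*$. Every branching vertex has at most $d$ neighbors, so $\psi_4$, which lists colors of branching vertices together with all their neighbors, has at most $(d + 1)\xi^*(G_A)$ entries. Since each entry takes one of two values, this gives $|\psi_4(\tilde\sC(A, r, m))| \le 2^{(d+1)\xi^*(G_A)}$. The main technical subtlety lies entirely in item (1): pinning down the exact length $r - 1$ of $\psi_2$ depends on the two structural facts above (that DFS on a tree begins and ends at a leaf, and that all leaves of $G_A^*$ are white). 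Once those are verified, the remainder is a routine stars-and-bars calculation.
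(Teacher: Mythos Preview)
Your overall approach matches the paper's, and your treatment of $\psi_4$ and of the $\psi_1$-dimension count is correct. The gap lies in your handling of white components in $\tilde G_A$: you claim $\psi_3=(|C_1|,\dots,|C_r|)$ and that there are exactly $r$ white (hence exactly $r-1$ black) components in $\tilde G_A$, justified by ``$\Psi$ is color-preserving.'' But $\Psi$ is merely a color-preserving bijection of \emph{vertices}; the line $\tilde G_A$ has a different edge set from $G_A^*$ (Proposition~\ref{prop: Psi interpretation}), so component structure can change. Concretely, if a branching vertex $x$ of $G_A^*$ is black while its far neighbor $y_x$ is white, then the edge-swap removing $\{x,y_x\}$ and inserting $\{y_x,z_x\}$ (with $z_x$ a leaf of $G_A^*$, hence white) merges the white component of $y_x$ with that of $z_x$. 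Thus $\tilde G_A$ may have strictly fewer than $r$ white components, and $\psi_3$ need not equal $(|C_1|,\dots,|C_r|)$.

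The paper fixes this by arguing, via Proposition~\ref{prop: Psi interpretation}, that each edge-swap can only merge white components, never split them, because $z_x$ is always white; hence the number of white components in $\tilde G_A$ is \emph{at most} $r$. This weaker statement suffices: $\psi_3$ is then a composition of $m$ into at most $r$ positive parts, which injects into compositions of $m$ into $r$ nonnegative parts and gives $\binom{m+r-1}{r-1}$; and there are at most $r-1$ black components, so your stars-and-bars for $(\psi_1,\psi_2)$ still yields the stated bound. Note in particular that your sharper intermediate bound $\binom{m-1}{r-1}$ for $|\psi_3(\tilde\sC(A,r,m))|$ is not valid in general, since the length of $\psi_3(\mathsf C)$ varies with $\mathsf C$.
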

\begin{proof}
We first prove Item \ref{item: 3}. Note that $\psi_3(\mathsf{C})$ records the capacities of all white components in $\tilde{G}_A$. We get that the total capacity of $\psi_3(\mathsf{C})$ is the number of white odd vertices and thus is $m$. In order to bound the enumeration, it suffices to bound the number of white components in $\tilde{G}_A$. 
In light of Proposition~\ref{prop: Psi interpretation}, we wish to prove that in the removing and adding procedure therein, the number of white components will not increase. Since each component in $G_A^*$ containing a leaf is white, we see that for each branching vertex $x$ removing the edge $\{x,y_x\}$ and adding the edge $\{y_x, z_x\}$ will not increase the number of white components. 
Combined with Proposition \ref{prop: Psi interpretation}, this implies that the number of white components in $\tilde{G}_A$ is at most $r$. Then the enumeration is bounded by the number of ways to split $m$ into $r$ non-negative integers, which is $\binom{m+r-1}{r-1}$. 

Next, we prove Item \ref{item: 2}. Note that $\psi_1(\mathsf{C})$ records the capacities of all black  components containing a leaf in the leaf-cutting procedure in Definition \ref{def: psi} and $\psi_2(\mathsf{C})$ records the capacities of all black components in $\tilde{G}_A$. Thus for any odd black vertex in $G_A$, it is either recorded in $\psi_1$ or is recorded in $\psi_2$. Hence the sum of the $\ell_1$-norms of $\psi_1(\mathsf{C})$ and $\psi_2(\mathsf{C})$ is $n-m$. In order to bound the enumeration, it suffices to bound the numbers of capacities (i.e., the dimensions of these two vectors) in $\psi_1(\mathsf{C})$ and $\psi_2(\mathsf{C})$. Since $G_A$ is a tree of maximal degree $d$, we get that the number of leaves in $G_A$ is $2+\sum_{p=3}^d(p-2)\cdot|\Xi_p(G_A)|=2+\xi^*(G_A)$. Therefore, the number of capacities in $\psi_1(\mathsf{C})$ is  $2+\xi^*(G_A)$. Furthermore, the black and white components in $\tilde{G}_A$ appear alternatively and the first and last components are white. Combining with the fact that there are at most $r$ white components in $\tilde{G}_A$, we get that the number of black components in $\tilde{G}_A$ is at most $r-1$. 
In conclusion, the total dimension of $\psi_1(\mathsf{C})$ and $\psi_2(\mathsf{C})$ is at most $1+r+\xi^*(G_A)$ and thus the enumeration of $\big(\psi_1(\mathsf{C}),\psi_2(\mathsf{C})\big)$ is bounded by the number of ways to split $n-m$ into $1+r+\xi^*(G_A)$ non-negative integers, which is $\binom{n-m+r+\xi^*(G_A)}{r+\xi^*(G_A)}$. 

Finally, we prove Item \ref{item: 4}. Since there are at most $\xi^*(G_A)$ branching vertices in $G_A^*$ and each branching vertex has at most $d$ neighbors, we get that $\psi_4$ records colors for at most $(d+1)\xi^*(G_A)$ vertices. Since each vertex has two possible colors, we see that the enumeration is bounded by $2^{(d+1)\xi^*(G_A)}$.
\end{proof}
\begin{proof}[Proof of Proposition~\ref{prop: enumeration lemma step 1}]
    Combining Proposition~\ref{prop: injective} and Lemma~\ref{lem: enumeration 1} gives the desired result.
\end{proof}
\subsection{Proof of Lemma~\ref{lem: enumeration lemma final}: The general case}
In this subsection, we bound $|\sC(A,r_1,r_2,m_1,m_2)|$ in full generality. Note that although we can recover the capacities of the white components in $G_A$ from $\psi$, we cannot tell directly from $\psi$ the capacities of the white components in $G_A$. So we can only upper-bound $|\sC(A,r_1,r_2,m_1,m_2)|$ by $|\sC(A,r_1+r_2,m_1+m_2)|$, which is too weak to carry out the induction in Section~\ref{sec: single cluster}.  As a result, we need to record more so that we can directly tell the capacities of the white components in $G_A$. This motivates the following definition.
\begin{defi}\label{def: psi 4 tilde}
For two integers $1\le a<b\le |V(G_A)|$, let $G_{a,b}$ be the sub-tree of $G_A$ composed by vertices of order between $a$ and $b-1$.  

Let $J$ denote the collection of branching vertices in $G_A$. For any branching vertex $x\in J$, we say $(x,y)$ is a branching pair if $y$ is a far neighbor of $x$. Let $l_x,l_y$ be the order of $x$ and $y$ respectively. 
    
Let $\mathsf{C}=(C_1, \cdots, C_r)$ be a given sequence of disjoint odd clusters and we allow them to be adjacent. For any branching pair $(x,y)$, let $C_{x,y}$ be the white component containing $x$ in $G_{l_x,l_y}$ (we use the convention $C_{x,y}=\emptyset$ if $x$ is black).
We record the number of white components in $G_{l_x,l_y}$ with orders bigger than the order of $C_{x,y}$, and we let $\psi_5(\mathsf{C})$ be the recorded values for all the branching pairs in the dictionary order according to the depth-first search of $G_A$. In addition, we define $\tilde{\psi}(\mathsf{C})=\Big(\psi_1(\mathsf{C}),\psi_2(\mathsf{C}),\psi_3(\mathsf{C}),\psi_4(\mathsf{C}),\psi_5(\mathsf{C})\Big)$. 
\end{defi}
\begin{rmk}\label{rmk: psi 4 tilde}
    Since $\tilde{\psi}$ includes $\psi$, it is easy to get $\tilde{\psi}$ is injective by Proposition~\ref{prop: injective}.
\end{rmk}
\begin{prop}\label{prop: enumeration lemma step 2}
    Let $A$ be an odd cluster. Let $\mathsf{C}_1=(C_{1,1},\cdots,C_{1,r})$ and $\mathsf{C}_2=(C_{2,1},\cdots,C_{2,r})$ be two sequences of disjoint odd clusters in $G_A$.
    Assume that $\psi_i(\mathsf{C}_1)=\psi_i(\mathsf{C}_2)$ for $i=1,2,4,5$. If $\psi_3(\mathsf{C}_1)\neq\psi_3(\mathsf{C}_2)$, then $(|C_{1,1}|,\cdots,|C_{1,r}|)\neq(|C_{2,1}|,\cdots,|C_{2,r}|)$.
\end{prop}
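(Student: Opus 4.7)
The plan is to prove the contrapositive: assuming $\psi_i(\mathsf{C}_1)=\psi_i(\mathsf{C}_2)$ for all $i\in\{1,2,4,5\}$ and $(|C_{1,j}|)_{j=1}^{r}=(|C_{2,j}|)_{j=1}^{r}$, I will deduce that $\psi_3(\mathsf{C}_1)=\psi_3(\mathsf{C}_2)$. Equivalently, I will show that $\psi_3(\mathsf{C})$ is uniquely determined by $\psi_1(\mathsf{C}),\psi_2(\mathsf{C}),\psi_4(\mathsf{C}),\psi_5(\mathsf{C})$ together with the size sequence $(|C_j|)_{j=1}^{r}$.

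First, I will follow essentially verbatim the reconstruction used in the proof of Proposition~\ref{prop: injective}: from $\psi_1$ and the known graph $G_A$ I recover the black leaf clusters and hence the subgraph $G_A^*$; from $G_A^*$ I obtain the line graph $\tilde G_A$ and the bijection $\Psi$; and from $\psi_4$ I read off the colors of every branching vertex of $G_A$ and all of its neighbors. After this step the only vertices whose colors remain undetermined are those lying in the interior of a maximal chain of degree-$2$ vertices in $G_A^*$.

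The main part of the proof is then to reconstruct the coloring on those remaining vertices from $\psi_5$ and the size sequence $(|C_j|)_{j=1}^{r}$. The plan is to traverse the branching pairs $(x,y_x^s)$ in the dictionary order used to define $\psi_5$, maintaining as an invariant a record of which clusters $C_j$ have been fully located so far and where the current ``partial'' clusters extending into unprocessed subtrees are rooted. At each new branching pair, the color of $x$ (known from $\psi_4$) together with the count $\psi_5(x,y_x^s)$ of white components in $G_{l_x,l_{y_x^s}}$ whose order exceeds that of $C_{x,y_x^s}$ pins down how many of the prescribed sizes $|C_j|$ live in the already traversed portion of the subtree at $x$, how many are continuations of a partial cluster passing through $x$, and how many are still to appear; combined with the tree structure of $G_A$ and the requirement that each $C_j$ be an odd cluster, this forces the partition of white odd vertices in $G_A$ into $C_1,\ldots,C_r$, and then the coloring of every even vertex follows from Definition~\ref{def: induced cluster}.

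Once the coloring on $V(G_A)$ is pinned down, it transfers by restriction to $V(G_A^*)$ and via $\Psi$ to $V(\tilde G_A)$; reading off the sizes of the maximal runs of consecutive white vertices in the line $\tilde G_A$ yields $\psi_3(\mathsf{C})$, completing the argument. I expect the main obstacle to be the bookkeeping in the previous paragraph: I must verify that the interaction between $\psi_5$, the prescribed sizes, and the tree structure leaves no ambiguity, and that several delicate edge cases --- when $x$ is black so $C_{x,y_x^s}=\emptyset$ and the phrase ``order of $C_{x,y_x^s}$'' must be interpreted consistently, when some $C_j$ straddles $x$ across two or more of its subtrees, or when two distinct local white components in the same subtree happen to tie in order --- are all handled uniformly by a single inductive rule.
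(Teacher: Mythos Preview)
Your contrapositive is the right reformulation and is precisely what the paper proves. But there is a concrete overreach in your plan, not merely deferred bookkeeping. You propose to ``reconstruct the coloring on those remaining vertices from $\psi_5$ and the size sequence'', i.e., to recover the full partition $C_1,\ldots,C_r$ from $\psi_1,\psi_4,\psi_5$ and $(|C_j|)$ alone. This is impossible already when $G_A$ is a line with no black leaves and $r\ge 3$: there $\psi_4$ and $\psi_5$ are empty, the sizes fix only the two end clusters, and the position of each interior $C_j$ is genuinely free. What \emph{is} determined in that example is $\psi_3$ itself --- it equals the size sequence, since the even vertex between two adjacent but distinct $C_i,C_{i+1}$ is black --- and that is all the proposition requires. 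So your target should be $\psi_3$, not the coloring; the full coloring only follows once you feed $\psi_3$ back into Proposition~\ref{prop: injective} together with $\psi_2$. Your traversal sketch does not yet explain how to recover $\psi_3$ in the presence of branching, where a single $C_j$ may be split across, or merged with other clusters into, one white run of $\tilde G_A$.

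The paper's argument is an induction on $|A|$. After stripping black leaves via $\psi_1$, it looks at the branching pair $(x,y)$ of \emph{largest} order, lets $U$ be the set of odd vertices with order at least that of $y$, and compares the last entries $a_{r_1},b_{r_2}$ of $\psi_3(\mathsf C_1),\psi_3(\mathsf C_2)$ against $c=|U|$. If these last entries differ and one is below $c$, then $C_{1,r}\subsetneq C_{2,r}$ (or vice versa), so the size sequences differ outright; otherwise one peels off either the last cluster or all of $U$ and recurses on a strictly smaller odd cluster. The datum $\psi_5$ is invoked only in the case where $x,y$ are both white and $U\subsetneq C_{\cdot,r}$: the value $s$ recorded at $(x,y)$ says that $C_{\cdot,r}\setminus U$ occupies position $r-s$ in the reordered sequence on $A\setminus U$, which is exactly what lets the induction hypothesis apply. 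This case analysis is doing the work that your ``single inductive rule'' would have to replicate.
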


\begin{proof} 
We prove this by induction on $|A|$. If $|A|=1$, then there is at most 1 cluster in $\mathsf{C}_i$ and thus the size of the cluster equals $\psi_3(\mathsf{C}_i)$. Next, we carry out the inductive proof, and we first consider a reduction as follows. Suppose that $\psi_1(\mathsf{C}_1)=\psi_1(\mathsf{C}_2)=\mathbf{B}_1$, $\psi_2(\mathsf{C}_1)=\psi_2(\mathsf{C}_2)=\mathbf{B}_2$ and $\psi_5(\mathsf{C}_1)=\psi_5(\mathsf{C}_2)=\mathbf{N}$.
We first consider the case when there exists a non-zero entry in $\mathbf{B}_1$.  Recall the leaf-cutting procedure in Definition \ref{def: psi}. Let $x$ be the first leaf with a non-zero value in $\mathbf{B}_1$.
Then we get from $\psi_1(\mathsf{C}_1)=\psi_1(\mathsf{C}_2)=\mathbf{B}_1$ that $x$ is black in both $\mathsf{C}_1$ and $\mathsf{C}_2$.  Thus $A\setminus \{x\}$ is still an odd cluster and the desired result follows from applying our induction hypothesis for $A\setminus\{x\}$, $\mathsf{C}_1$ and $\mathsf{C}_2$. Since we may apply such reduction as long as there exists a non-zero entry in $\mathbf B_1$, we may in what follows assume without loss of generality that $\mathbf{B}_1=(0,\cdots,0)$, implying that $G_A=G_A^*$.

Let $\psi_3(\mathsf{C}_1)=(a_i)_{1\le i\le r_1},\psi_3(\mathsf{C}_2)=(b_i)_{1\le i\le r_2}$ be the capacities of all white components in $\tilde{G}_A$ with respect to $\mathsf C_1$ and $\mathsf C_2$ respectively. Let $y$ be the vertex with maximal order in $N(J)$ and let $x$ be the branching vertex neighboring to $y$.
Let $s$ be the value recorded for the branching pair $(x,y)$ in $\mathbf{N}$ and let $l$ be the order of $y$ in $G_A^*$. Let $U$ be the collection of odd vertices with orders at least $l$ and let $c = |U|$. Let $z$ be the largest leaf in $G_A^*$. 
We divide our analysis into the following four cases. 

{\bf Case 1:} $a_{r_1}\neq b_{r_2}$ and at least one of them is smaller than $c$. Without loss of generality, we assume $a_{r_1}<b_{r_2}$, and thus $a_{r_1}<c$. 
Let $U_1,U_2$ be the odd cluster in $G_A$ containing the leaf $z$ with size $a_{r_1}$ and $\min\{b_{r_2},c\}$ respectively (see Figure \ref{fig: cases in injectivity} (a) and (b)). Note that both $U_1$ and $U_2$ are subsets of $U$ and thus they are well-defined. Then we get that $C_{1,r}=U_1$ and  $C_{2,r}\supset U_2$, which in turn contains $U_1$ as a proper subset.  Thus we get that $C_{1,r}\subsetneq C_{2,r}$.

{\bf Case 2:} $a_{r_1}=b_{r_2}< c$. We get that $C_{1,r}= C_{2,r}$ (similar to reasoning above). Let $A'=A\setminus C_{1,r}$, $\mathsf{C}_1'=(C_{1,1},\cdots,C_{1,r-1})$ and $\mathsf{C}_2'=(C_{2,1},\cdots,C_{2,r-1})$.
The desired result follows from applying our induction hypothesis for $A',\mathsf{C}_1'$ and $\mathsf{C}_2'$.

{\bf Case 3:} $a_{r_1}\ge c$, $b_{r_2}\ge c$ and either $x$ or $y$ is black. Since $\psi_4(\mathsf{C}_1)=\psi_4(\mathsf{C}_2)$, we get that the colors of $x$ and $y$ are the same in $\mathsf{C}_1$ and $\mathsf{C}_2$ and thus they are all black. Since $a_{r_1}\ge c$ and $b_{r_2}\ge c$, we get that $U\subset C_{i,r}$ for $i=1,2$.  Since we have that both $x$ and $y$ are black, we get that $C_{i,r}\subset U$. Therefore, $C_{i,r}= U$ for $i=1,2$ (see Figure \ref{fig: cases in injectivity} (c)), and in this case, we in fact have $a_{r_1} = b_{r_2} = c$. The desired result thus follows from applying our induction hypothesis for $(C_{1,1},\cdots,C_{1,r-1})$ and $(C_{2,1},\cdots,C_{2,r-1})$.

{\bf Case 4:} $a_{r_1}\ge c$, $b_{r_2}\ge c$ and both $x$ and $y$ are white.  Let $A'=A\setminus U$. Since $a_{r_1}\ge c$ and $b_r\ge c$, (as in Case 3) we get that $U\subset C_{i,r}$ for $i=1,2$ (see Figure \ref{fig: cases in injectivity} (d)). Our natural attempt is to apply the induction hypothesis for $A'$ and $(C_{i,1},\cdots,C_{i,r}\setminus U)$. However, an issue arises that $(C_{i,1},\cdots,C_{i,r}\setminus U)$ may not be in order because the  order of $C_{i,r}\setminus U$ may be substantially smaller than that for $C_{i, r}$. In order to address this, recall that $s$ is the value recorded for the branching pair $(x,y)$ in $\mathbf{N}$ and thus we see from Definition~\ref{def: psi 4 tilde} that there are $s$ white components with orders bigger than $C_{i,r}\setminus U$ in $G_{A'}$. Thus
the order of $C_{i,r}\setminus U$ in $G_{A'}$ should be $r-s$. As a result, it is natural to define $$\mathsf{C}_i'=(C_{i,1},\cdots,C_{i,r-s-1},C_{i,r}\setminus U, C_{i,r-s},\cdots,C_{i,r-1})$$ for $i=1,2$. For any branching vertex $z$, let $w\neq y$ be a far neighbor of $z$. Then the orders of $z$ and $w$ are smaller than $l$ (recall the maximality in the definition of $y$) and thus the value recorded for $z$ in ${\psi}_5(\mathsf{C}_i')$ equals to that in ${\psi}_5(\mathsf{C}_i)$. Thus we get ${\psi}_5(\mathsf{C}_1')={\psi}_5(\mathsf{C}_2')$. Furthermore, we have ${\psi}_1(\mathsf{C}_1')={\psi}_1(\mathsf{C}_2')=\mathbf{B}_1$, ${\psi}_2(\mathsf{C}_1')={\psi}_2(\mathsf{C}_2')=\mathbf{B}_2$ and ${\psi}_4(\mathsf{C}_1')={\psi}_4(\mathsf{C}_2')={\psi}_4(\mathsf{C}_1)|_{G_{A'}}$. Then the desired result follows from applying the induction hypothesis for $A',\mathsf{C}_1'$ and $\mathsf{C}_2'$.
\end{proof}
\begin{figure}[htbp]
    \centering
    \subfloat[Case 1]{\includegraphics[width=0.25\linewidth]{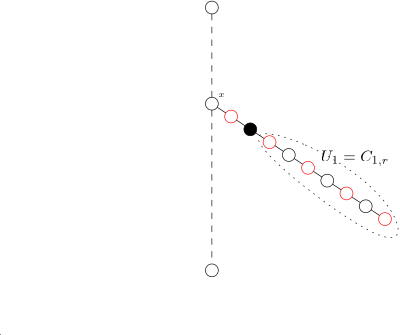}}\hspace{0.05\linewidth}
    \subfloat[Case 1]{\includegraphics[width=0.2\linewidth]{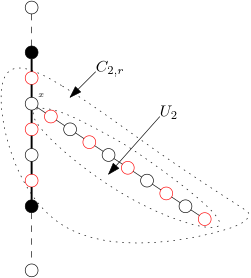}}\hspace{0.05\linewidth}
    \subfloat[Case 3]{\includegraphics[width=0.19\linewidth]{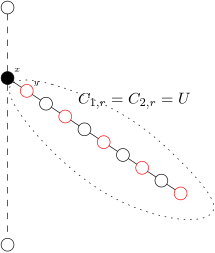}}\hspace{0.05\linewidth}
    \subfloat[Case 4]{\includegraphics[width=0.16\linewidth]{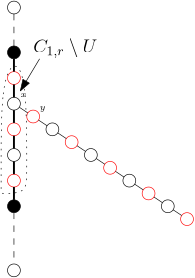}}
    \caption{An illustration of the graph $G_A$ in Proposition~\ref{prop: injective}. Dashed lines: undrawn parts of the graph $G_A$.}
    \label{fig: cases in injectivity}
\end{figure}
\begin{lem}\label{lem: enumeration 2}
Let $A$ be an odd cluster with $n$ vertices.  Let $r,m,r_1,r_2,m_1,m_2$ be  non-negative integers. Then we have 
\begin{enumerate}
    \item $\Big|\Big\{\big(\psi_1(\mathsf{C}),\psi_2(\mathsf{C})\big): \mathsf{C}\in \sC(A,r_1,m_1,r_2,m_2)\Big\}\Big|\le\binom{n-m_1-m_2+r_1+r_2+\xi^*(G_A)}{r_1+r_2+\xi^*(G_A)}.$\label{item: two type 1}
     \item $\Big|\big\{\psi_4(\mathsf{C}): \mathsf{C}\in \sC(A,r_1,m_1,r_2,m_2)\big\}\Big|\le2^{(d+1)\xi^*(G_A)}$.\label{item: two type 2}
\end{enumerate}
\end{lem}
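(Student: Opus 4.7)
The plan is to reduce both items directly to the corresponding statements in Lemma~\ref{lem: enumeration 1}. The key observation is that any $\mathsf{C} = (C_1, \ldots, C_{r_1+r_2}) \in \sC(A, r_1, r_2, m_1, m_2)$ is, once we forget the designation of clusters as large versus small, a sequence of $r := r_1+r_2$ disjoint odd clusters in $A$ with total size $m := m_1 + m_2$, arranged increasingly according to their order in the depth-first search of $G_A$. Therefore $\sC(A, r_1, r_2, m_1, m_2) \subset \tilde\sC(A, r_1+r_2, m_1+m_2)$ as sets of sequences.

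Since $\psi_1, \psi_2$ and $\psi_4$ are functions of $\mathsf{C}$ whose definitions (recall Definition~\ref{def: psi}) make no reference to the large/small distinction, the image of $\sC(A, r_1, r_2, m_1, m_2)$ under any of these maps is contained in the image of $\tilde\sC(A, r_1+r_2, m_1+m_2)$. Combining this inclusion with Items~\ref{item: 2} and \ref{item: 4} of Lemma~\ref{lem: enumeration 1} (applied with $r = r_1+r_2$ and $m = m_1+m_2$) immediately yields
\[
\Big|\Big\{\big(\psi_1(\mathsf{C}),\psi_2(\mathsf{C})\big): \mathsf{C}\in \sC(A,r_1,r_2,m_1,m_2)\Big\}\Big| \le \binom{n-m_1-m_2+r_1+r_2+\xi^*(G_A)}{r_1+r_2+\xi^*(G_A)},
\]
and $\big|\{\psi_4(\mathsf{C}) : \mathsf{C} \in \sC(A, r_1, r_2, m_1, m_2)\}\big| \le 2^{(d+1)\xi^*(G_A)}$, which are the two claimed bounds.

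There is essentially no obstacle for this lemma, as it is essentially a restatement of the relevant parts of Lemma~\ref{lem: enumeration 1} in the refined parameter regime. The entire reason for splitting the clusters into large and small types in the first place is to keep track of them separately when recording the capacities of the white components, i.e., for $\psi_3$; this is precisely the role of the extra data $\psi_5$ introduced in Definition~\ref{def: psi 4 tilde} and exploited in Proposition~\ref{prop: enumeration lemma step 2}. The present lemma, by contrast, addresses only those components of $\psi$ that are invariant under the refinement, and hence the coarser enumeration from Lemma~\ref{lem: enumeration 1} is already sharp enough.
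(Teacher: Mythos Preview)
Your proposal is correct and matches the paper's approach exactly: the paper's proof simply states that Items~\ref{item: two type 1} and \ref{item: two type 2} have the same proofs as Items~\ref{item: 2} and \ref{item: 4} of Lemma~\ref{lem: enumeration 1}, respectively, which is precisely what your inclusion $\sC(A,r_1,r_2,m_1,m_2)\subset\tilde\sC(A,r_1+r_2,m_1+m_2)$ formalizes.
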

\begin{proof}
    The proof of Item \ref{item: two type 1} is the same as the proof of Item~\ref{item: 2} in Lemma~\ref{lem: enumeration 1} and the proof of Item \ref{item: two type 2} is the same as the proof of Item~\ref{item: 4} in Lemma~\ref{lem: enumeration 1}, so we omit them here.
\end{proof}
\begin{lem}\label{lem: psi 4 and 5 enumeration bound}
    Let $A$ be an odd cluster with $n$ vertices. Let $r,m,r_1,r_2,m_1,m_2$ be  non-negative integers. Then we have 
         $$\Big|\Big\{\big(\psi_4(\mathsf{C}),\psi_5(\mathsf{C})\big): \mathsf{C}\in \sC(A,r_1,m_1,r_2,m_2)\Big\}\Big|\le 2^{(d+1)\xi^*(G_A)}\cdot\binom{r_1+r_2+4\xi^*(G_A)+1}{2\xi^*(G_A)+1}.$$
\end{lem}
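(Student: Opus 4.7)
The plan is to decompose the enumeration using the bound $|\{(\psi_4(\mathsf{C}), \psi_5(\mathsf{C}))\}| \leq |\{\psi_4(\mathsf{C})\}| \cdot |\{\psi_5(\mathsf{C})\}|$ and bound each factor separately. For the first factor, I would repeat the argument of Item~\ref{item: 4} of Lemma~\ref{lem: enumeration 1}: since $\psi_4$ records colors of branching vertices and their neighbors, each branching vertex has at most $d$ neighbors, and there are at most $\xi^*(G_A)$ branching vertices in $G_A$, this gives $|\{\psi_4(\mathsf{C})\}| \leq 2^{(d+1)\xi^*(G_A)}$.

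For the second factor, I would first observe that $\psi_5$ has exactly $\xi^*(G_A)$ entries: each branching vertex of degree $p \geq 3$ contributes $p-2$ branching pairs via its far neighbors, and summing over branching vertices gives $\sum_{p \geq 3}(p-2)|\Xi_p(G_A)| = \xi^*(G_A)$. Next, I would argue that each entry $\psi_5[(x,y)]$ is bounded by $r_1 + r_2$, the total number of white components of $G_A$. The key steps are: (i) $G_{l_x, l_y}$ is a connected subtree of $G_A$, because the DFS starting from a leaf visits $G_A$ in such a way that any contiguous range of orders corresponds to a connected block of explored subtrees rooted at $x$; and (ii) in a tree, the intersection of a connected subset with a connected subtree is again connected, since the unique path between any two vertices in the intersection must lie entirely in both. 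Together, (i) and (ii) imply that each of the $r_1 + r_2$ white clusters of $G_A$ contributes at most one white component to $G_{l_x, l_y}$, so the number of white components in $G_{l_x, l_y}$, and hence $\psi_5[(x,y)]$, is at most $r_1 + r_2$. This gives $|\{\psi_5(\mathsf{C})\}| \leq (r_1 + r_2 + 1)^{\xi^*(G_A)}$.

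Multiplying the two bounds and invoking the elementary combinatorial inequality $(n+1)^k \leq \binom{n+4k+1}{2k+1}$ for all integers $n, k \geq 0$ (provable by induction on $n$ using Pascal's rule, with the base case $n=0$ being trivial) yields the desired bound. The main obstacle is establishing the connectedness of $G_{l_x, l_y}$ and its stability under intersection with white clusters, which requires a careful argument with the DFS structure; by contrast, the combinatorial inequality is elementary but slightly tedious to verify in full generality.
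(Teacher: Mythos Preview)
Your bound $|\{\psi_4(\mathsf{C})\}| \leq 2^{(d+1)\xi^*(G_A)}$ and your observation that $\psi_5$ has $\xi^*(G_A)$ coordinates, each lying in $\{0,\ldots,r_1+r_2\}$, are both correct. The gap is in the last step: the inequality $(n+1)^k \leq \binom{n+4k+1}{2k+1}$ is \emph{false} in general. For fixed $n$ and $k\to\infty$, Stirling's formula gives $\binom{n+4k+1}{2k+1}\sim 2^{n+1}\cdot 16^{k}/\sqrt{2\pi k}$, while the left side is $(n+1)^k$; hence whenever $n\geq 16$ the ratio $(n+1)^k\big/\binom{n+4k+1}{2k+1}$ tends to infinity. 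A concrete failure occurs already at $n=20$, $k=50$. In other words, a vector of $\xi^*(G_A)$ entries each individually bounded by $r_1+r_2$ simply admits too many values when both parameters are large; the stated binomial coefficient cannot be recovered from the product bound $(r_1+r_2+1)^{\xi^*(G_A)}$, and your proposed induction on $n$ must therefore break down.

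The paper avoids this by not bounding $\psi_5$ directly. It introduces an auxiliary map $\hat\psi_5$ that records, for each of the at most $2\xi^*(G_A)+1$ line segments obtained by deleting the branching vertices from $G_A$, the number of white components lying in that segment. Because these segments are disjoint, the \emph{sum} of the entries of $\hat\psi_5$ is controlled (at most $r_1+r_2+2\xi^*(G_A)$), and a stars-and-bars count then yields exactly the binomial coefficient $\binom{r_1+r_2+4\xi^*(G_A)+1}{2\xi^*(G_A)+1}$. A separate lemma shows that $\psi_5$ can be reconstructed from $(\psi_4,\hat\psi_5)$, so the enumeration of $(\psi_4,\psi_5)$ is bounded by that of $(\psi_4,\hat\psi_5)$. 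The crucial difference from your approach is the passage from an entrywise bound (which gives a product) to a sum constraint (which gives a binomial coefficient); this is exactly what your argument is missing.
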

We leave the proof of Lemma~\ref{lem: psi 4 and 5 enumeration bound} to the end of this section. We first show the proof of  Lemma~\ref{lem: enumeration lemma final} assuming Lemma~\ref{lem: psi 4 and 5 enumeration bound}.
\begin{proof}[Proof of Lemma~\ref{lem: enumeration lemma final} assuming Lemma~\ref{lem: psi 4 and 5 enumeration bound}]
By Proposition~\ref{prop: injective} and Remark~\ref{rmk: psi 4 tilde},
    it suffices to compute the number of tuples $(\mathbf{B}_1,\mathbf{B}_2,\mathbf{W},\mathbf{R},\mathbf{N})$ such that there exists a sequence of disjoint odd clusters $\mathsf{C}\in\sC(A,r_1,r_2,m_1,m_2)$ satisfying $\tilde{\psi}(\mathsf{C})=(\mathbf{B}_1,\mathbf{B}_2,\mathbf{W},\mathbf{R},\mathbf{N})$. Note that the number of ways to choose $(|C_1|, \ldots, |C_{r_1+r_2}|)$ is at most $\binom{m_1-1}{r_1-1}\cdot \binom{m_2-1}{r_2-1}\cdot\binom{r_1+r_2}{r_1}$. Next, we fix a choice of $(|C_1|,\cdots,|C_{r_1+r_2}|)=(a_1,\cdots,a_{r_1+r_2})$.
    By Proposition \ref{prop: enumeration lemma step 2}, we get that for any given  $\mathbf{B}_1,\mathbf{B}_2,\mathbf{R}$ and $\mathbf{N}$, there exists at most one $\mathbf{W}$ such that the following holds: there exists a sequence of disjoint odd clusters $\mathsf{C}\in\sC(A,r_1,r_2,m_1,m_2)$ satisfying $\tilde{\psi}(\mathsf{C})=(\mathbf{B}_1,\mathbf{B}_2,\mathbf{W},\mathbf{R},\mathbf{N})$ and the sequence for the cluster sizes is $(a_1,\cdots,a_{r_1+r_2})$. Thus it suffices to bound the enumeration of $\mathbf{B}_1,\mathbf{B}_2$ and $\mathbf{R},\mathbf{N}$.
    
    By  Lemma~\ref{lem: enumeration 2}, we get that the enumeration of $(\mathbf{B}_1,\mathbf{B}_2)$ is upper-bounded by $$\binom{n-m_1-m_2+r_1+r_2+\xi^*(G_A)}{r_1+r_2+\xi^*(G_A)}.$$  
    By Lemma~\ref{lem: psi 4 and 5 enumeration bound}, the enumeration of $(\mathbf{R},\mathbf{N})$ is at most $2^{(d+1)\xi^*(G_A)}\cdot\binom{r_1+r_2+4\xi^*(G_A)+1}{2\xi^*(G_A)+1}$.

     In conclusion, we get that the enumeration of   $(\mathbf{B}_1,\mathbf{B}_2,\mathbf{W},\mathbf{R},\mathbf{N})$ is upper-bounded by \begin{equation}
         \begin{aligned}\label{eq: enumeration final -1}
             &2^{(d+1)\xi^*(G_A)}\cdot\binom{m_1-1}{r_1-1}\cdot \binom{m_2-1}{r_2-1}\cdot\binom{r_1+r_2}{r_1}\\&\cdot\binom{n-m_1-m_2+r_1+r_2+\xi^*(G_A)}{r_1+r_2+\xi^*(G_A)}\cdot\binom{r_1+r_2+4\xi^*(G_A)+1}{2\xi^*(G_A)+1}.
         \end{aligned}
     \end{equation}  Combining \eqref{eq: enumeration final -1} with Lemma~\ref{lem: d=3 trifurcation relation} and the fact that $\binom{r_1+r_2+4\xi^*(G_A)+1}{2\xi^*(G_A)+1}\le 2^{r_1+r_2+4\xi^*(G_A)+1}$, it yields the desired result.
\end{proof}
Next, we prove Lemma~\ref{lem: psi 4 and 5 enumeration bound}. We start with the following lemma.
\begin{lem}\label{lem: tree order structure}
    Let $C$ be a cluster in $G_A$ with maximal order $l^+$ and minimal order $l^-$. If $x\notin C$ is a vertex with order in $(l^-,l^+)$ and $y\notin C$ is a vertex with order not in $[l^-, l^+]$, then we have $x$ and $y$ are not connected in $G_A\setminus C$.
\end{lem}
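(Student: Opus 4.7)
My plan is to exploit the contiguous-interval property of pre-order depth-first search on a tree. Root $G_A$ at the DFS starting leaf, so the order $o(v)$ of each vertex equals its pre-order time, and for every vertex $v$ the rooted subtree $T_v$ occupies the contiguous block of orders $[o(v), o(v)+|T_v|-1]$. Since $C$ is a connected subgraph of the tree $G_A$, it is itself a subtree, and the vertex $v^*\in C$ with minimum order $l^-$ is an ancestor in the rooted tree of every other vertex of $C$. Hence $C\subset T_{v^*}$, giving $l^+\le l^-+|T_{v^*}|-1$, and the hypothesis $o(x)\in(l^-,l^+)$ forces $x\in T_{v^*}$, i.e.\ $x$ is a strict descendant of $v^*$.

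It suffices to show that the unique $G_A$-path from $x$ to $y$ passes through $C$. The easy case is $y\notin T_{v^*}$ (which includes $o(y)<l^-$ and $o(y)>l^-+|T_{v^*}|-1$): then the least common ancestor of $x$ and $y$ in the rooted tree is a strict ancestor of $v^*$, so the path climbing from $x$ to the LCA passes through $v^*\in C$, and we are done.

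The main case, and the step I expect to be the main obstacle, is when $y\in T_{v^*}$; since $y$ is then a descendant of $v^*$ we have $o(y)\ge l^-$, and combined with $o(y)\notin[l^-,l^+]$ this forces $o(y)>l^+$. I argue by contradiction: suppose the LCA $L$ of $x$ and $y$ lies in $T_{v^*}\setminus C$. Trace the rooted-tree path from $v^*$ down to $L$; let $c$ be the last vertex of $C$ on this path and let $c'$ be the next vertex, so $c\in C$, $c'\notin C$, and $L\in T_{c'}$, whence $x,y\in T_L\subset T_{c'}$. The order interval $[o(c'), o(c')+|T_{c'}|-1]$ therefore contains both $o(x)<l^+$ and $o(y)>l^+$, and in particular contains $l^+$. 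Then the vertex $c^*\in C$ with $o(c^*)=l^+$ is a strict descendant of $c'\notin C$, so the unique $G_A$-path from $c^*$ up to $v^*$ must pass through $c'\notin C$. This contradicts the fact that $C$ is a connected subtree of $G_A$ containing both $c^*$ and $v^*$, since the $C$-internal path between them must coincide with the $G_A$-path.

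The only delicate ingredient is the dichotomy in the main case: one needs the ``extremal'' vertex of $C$ (the one realizing $l^+$) to land inside $T_{c'}$ to manufacture the contradiction, and this is exactly what the sandwich $o(x)<l^+<o(y)$ inside the contiguous interval $[o(c'),o(c')+|T_{c'}|-1]$ provides. Everything else reduces to standard rooted-tree/LCA bookkeeping together with the fact that connected subgraphs of a tree are subtrees.
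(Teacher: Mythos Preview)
Your proof is correct and takes a genuinely different route from the paper's. Both arguments rest on the same underlying fact---that the vertex set of a rooted subtree is a contiguous interval in the pre-order enumeration---but they deploy it differently. The paper walks along the $x$--$y$ path, locates a single edge $\{x_i,x_{i+1}\}$ across which the order jumps past $l^+$, identifies $x_i$ as a branching vertex with far neighbor $x_{i+1}$, and then analyzes the components of $G_A\setminus\{x_i\}$ directly. Your approach instead organizes everything through the rooted-tree structure: you root at the DFS start, identify $v^*$ (the minimum-order vertex of $C$) as the root of the subtree $C$, and argue via $\mathrm{LCA}(x,y)$ and the child $c'$ of the last $C$-vertex on the $v^*$--$L$ path. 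This lets you avoid any explicit mention of branching vertices or far neighbors, and the two cases $y\notin T_{v^*}$ and $y\in T_{v^*}$ fall out cleanly. The paper's ``without loss of generality $o(y)>l^+$'' is handled in your argument by the easy case $y\notin T_{v^*}$, which transparently covers $o(y)<l^-$ as well. Your version is arguably more self-contained; the paper's is shorter once one already has the branching-vertex/far-neighbor vocabulary in hand.
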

\begin{proof}
    We prove this by contradiction. Without loss of generality, we assume $y$ has order bigger than $l^+$. Assume that $x$ and $y$ are connected in $G_A\setminus C$. Let $x=x_1,\cdots,x_k=y$ be the path connecting $x$ and $y$. Then we get that $x_i\notin C$ for all $1\leq i\leq k$. Since $x$ has order smaller than $l^+$ and $y$ has order bigger than $l^+$, we get that there exists $x_i$ and $x_{i+1}$ such that $x_i$ has order smaller than $l^+$ and $x_{i+1}$ has order bigger than $l^+$. Let $l_i$ and $l_{i+1}$ be the order of $x_i$ and $x_{i+1}$ respectively. Since $x_i$ and $x_{i+1}$ are neighbors and $l_{i+1}-l_i>1$, we get that $x_i$ is a branching vertex and $x_{i+1}$ is a far neighbor of $x_i$. Consider the graph $G_A\setminus\{x_i\}$. Since $x_i\notin C$, we get that $C$ is contained in a connected component of $G_A\setminus\{x_i\}$. Let $z$ be the vertex with order $l_i-1$ and let $G_z$ be the connected component of $G_A\setminus\{x_i\}$ containing $z$. Then $G_z$ is the only connected component that contains vertices with order less than $l_i$. Note that $l^-<l_i$ and thus $C$ is contained in $G_z$. By the definition of depth-first search, we get that before searching $z$ twice, all the vertices in $G_A\setminus G_z$ have been searched twice. Thus we get that all the vertices in $G_z$ have order not in $[l_i,l_{i+1}]$. This contradicts the fact that $l_i<l^+<l_{i+1}$.
\end{proof}
\begin{defi}\label{def: hat psi 5}
Let $L$ be the collection of leaves in $G_A$ and recall that $J$ is the collection of branching vertices in $G_A$. We consider the connected components of $G_A\setminus J$. Then, each such component is a line with endpoints in $N(J)\cup L$. We say $x, y$ is a pair of \textbf{branching endpoints} if they are endpoints for such a component, for which we denote as $H_{x, y}$. If such a component has only one vertex $x$, then we use the convention that $x,x$ is a pair of branching endpoints and $H_{x,y}=\{x\}$.

    Let $\mathsf{C}=(C_1,\cdots,C_r)$ be a given sequence of disjoint odd clusters. Let $\hat{\psi}_5(\mathsf{C})_{x,y}$ denote the number of white components in $H_{x,y}$ over all pairs of branching endpoints and let $\hat{\psi}_5(\mathsf{C})$ be the sequence in the order according to the maximal order in the pairs $(x,y)$.
\end{defi}
\begin{defi}\label{def: graph H^*}
    We now construct a graph $H^*$ with vertices $J\cup N(J)\cup L$ as follows: for any $x,y\in J\cup N(J)\cup L$, an edge is added between $x,y$ in $H^*$ if and only if they are neighboring in $G_A$ or they are a pair of branching endpoints of $H_{x,y}$.  See Figure~\ref{fig: skew graph H*} (a) for an illustration.

    Recalling Definition~\ref{def: white black cluster}, we have a coloring for the vertices in $G_A$ according to $\mathsf{C}$.
    Let $H^*_{\mathsf{C}}$ be a subgraph of $H^*$ with the same vertex set and with the following edge set: for any two different vertices $x,y\in J\cup N(J)\cup L$, there is an edge between $x,y$ in $H^*_{\mathsf{C}}$ if and only if either of the following conditions holds: \begin{enumerate}
        \item  They are neighboring in $G_A$ and both of them are white.
        \item $x,y$ is a pair of branching endpoints, and all the vertices in $H_{x,y}$ are white in $G_A$.
    \end{enumerate} See Figure~\ref{fig: skew graph H*} (b) for an illustration.
\end{defi}
\begin{rmk}\label{rmk: graph H^*}
    By Definition \ref{def: graph H^*}, we get that for any two white vertices $x,y\in J\cup N(J)\cup L$, they are contained in the same white component in $G_A$ if and only if they are contained in the same white component in $H^*_{\mathsf{C}}$.
\end{rmk}
\begin{figure}[ht]
\centering  
\subfloat[$H^*$]{
\includegraphics[width=0.2\linewidth]{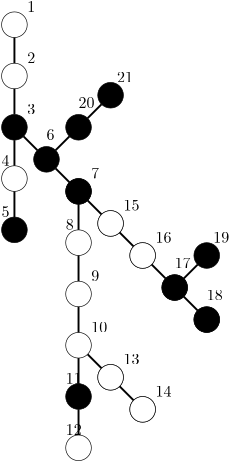}}\hspace{0.3\linewidth}
\subfloat[$H^*_{\mathsf{C}}$]{\includegraphics[width=0.2\linewidth]{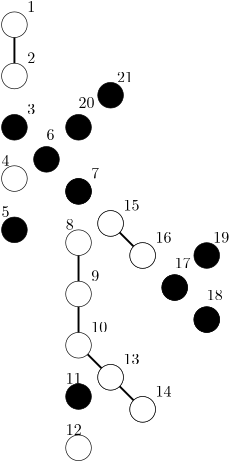}}
\caption{An illustration of $H^*$ and $H^*_{\mathsf{C}}$ where $G_A$ and $\mathsf{C}$ are given in Figure~\ref{fig: cutting the leaves}.}
\label{fig: skew graph H*}
\end{figure}
\begin{lem}\label{lem: number of white cluster by 4 +5'}
    Let $x\in J\cup N(J)$ and let $y\in N(J)$ be a far neighbor of a branching vertex.  Let $l_x$ and $l_y$ be the order of $x$ and $y$ respectively. Suppose that $l_x<l_y$. Let $\mathsf{C}=(C_1,\cdots, C_r)$ be a given sequence of disjoint odd clusters. Then we can recover the number of white components in $G_{l_x,l_y}$ from $\psi_4(\mathsf{C})$ and $\hat{\psi}_5(\mathsf{C})$. 
\end{lem}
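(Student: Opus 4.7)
I will give an explicit recovery procedure for the number of white components of $G_{l_x,l_y}$ in terms of $\psi_4(\mathsf{C})$ and $\hat{\psi}_5(\mathsf{C})$. The crucial first point is that the tree $G_A$ together with its depth-first search order is fixed data independent of $\mathsf{C}$; hence we know a priori the partition of $V(G_A)$ into branching vertices $J$, their neighbors $N(J)$, leaves $L$, and line-segments $H_{u,v}$ (connected components of $G_A\setminus J$), and its restriction to $V(G_{l_x,l_y})$. From $\psi_4(\mathsf{C})$ we read off the color of every vertex in $(J\cup N(J))\cap V(G_{l_x,l_y})$, and for every segment $H_{u,v}$ entirely contained in $V(G_{l_x,l_y})$, $\hat{\psi}_5(\mathsf{C})_{u,v}$ is by definition the number of white components of $H_{u,v}$.

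The heart of the argument is a local observation inside each segment. Since $H_{u,v}$ is a line whose two endpoints now have known colors, $u$ and $v$ share a white component of $H_{u,v}$ if and only if both endpoints are white and $\hat{\psi}_5(\mathsf{C})_{u,v}=1$. Hence the number of interior white components of $H_{u,v}$ (those disjoint from $\{u,v\}$) equals
\[
\hat{\psi}_5(\mathsf{C})_{u,v} - \mathbf{1}[u\text{ white}] - \mathbf{1}[v\text{ white}] + \mathbf{1}[u,v\text{ in the same component of }H_{u,v}],
\]
which is computed from $\psi_4$ and $\hat{\psi}_5$ alone. Next I assemble the global count by building, in the spirit of $H^*_{\mathsf{C}}$ from Definition~\ref{def: graph H^*}, a compressed graph $\tilde H$ on $V_J^* := (J\cup N(J)\cup L)\cap V(G_{l_x,l_y})$: I connect adjacent $(J\cup N(J))$-pairs when both are white, and I connect branching endpoints $u,v$ of every fully contained segment whenever the ``same component'' indicator above equals $1$. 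By the analog of Remark~\ref{rmk: graph H^*}, the number of white connected components of $\tilde H$ equals the number of white components of $G_{l_x,l_y}$ that meet $V_J^*$; summing the interior counts from the preceding display over all fully contained segments and adding yields the total.

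The main obstacle is the treatment of boundary-crossing segments, that is, those $H_{u,v}$ whose depth-first-search order range straddles $l_x$ or $l_y$. Because DFS traverses each degree-$2$ chain in one contiguous block, every segment's vertices occupy a contiguous interval of orders, so such crossings are tightly controlled. The hypotheses $x\in J\cup N(J)$ and ``$y$ is a far neighbor of a branching vertex'' are tailored exactly to pin the boundary at $(J\cup N(J))$-vertices: at the boundary we are either truncating a segment into a sub-line whose new endpoint lies in $N(J)$ (still colored by $\psi_4$) or severing a segment entirely at a branching vertex. In either situation, the same indicator computation applies to the retained sub-line, with the white-component count of the retained portion recoverable from $\hat{\psi}_5$ by accounting for the severed part, and the procedure above goes through verbatim to produce the total number of white components in $G_{l_x,l_y}$.
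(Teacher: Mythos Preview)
Your approach differs substantially from the paper's, which is shorter and avoids the compressed-graph construction entirely. The paper's strategy is: sever certain edges of $G_{l_x,l_y}$ joining branching vertices to their neighbors, so that each white branching vertex sits as its own white component separate from the segments; in this severed graph the white components number $k+\sum_{(u,v)\in\mathbf{BE}}\hat\psi_5(\mathsf C)_{u,v}$ (white branching vertices plus segment contributions), and then re-attaching the severed white--white edges merges one pair of components per edge. All three ingredients---$k$, the segment sums, and the number $e$ of white--white re-attachments---are read off directly from $\psi_4$ and $\hat\psi_5$, giving a one-line formula with no interior/boundary decomposition.

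Your plan has a gap at the step ``$H_{u,v}$ is a line whose two endpoints now have known colors.'' The map $\psi_4$ records colors only of vertices in $J\cup N(J)$; segment endpoints lie in $N(J)\cup L$, and a leaf need not belong to $N(J)$. Thus for a segment with a leaf endpoint $v\in L\setminus N(J)$ you cannot evaluate either your interior-count formula
\[
\hat\psi_5(\mathsf C)_{u,v}-\mathbf 1[u\text{ white}]-\mathbf 1[v\text{ white}]+\mathbf 1[u,v\text{ same component}]
\]
or the ``same component'' indicator needed to build $\tilde H$. The repair is to note that a leaf never glues to anything outside its own segment, so all merging information is carried by the $N(J)$-endpoints alone; reorganizing the count around this observation is essentially what the paper's formula does. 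Separately, your handling of boundary-crossing segments is under-argued: the assertion that the retained portion's white count is ``recoverable from $\hat\psi_5$ by accounting for the severed part'' is false for a generic sub-line of a segment. What the hypotheses on $x$ and $y$ actually buy you is stronger and cleaner: any segment not fully contained in $G_{l_x,l_y}$ meets it in a single endpoint (namely $x$ when $x\in N(J)$), whose color is recorded in $\psi_4$. You should state and prove this, rather than appeal to an unspecified ``accounting.''
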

\begin{proof}
    Let $\mathbf{BP}$ be the collection of branching pairs $(u,v)$ such that both $u$ and $v$ have orders between $l_x$ and $l_y$ and let $\mathbf{BE}$ be the collection of pairs of branching endpoints $(u,v)$ as in Definition \ref{def: hat psi 5} such that both $u$ and $v$ have orders between $l_x$ and $l_y$. Let $k$ be the number of white branching vertices in $V(G_{l_x,l_y})$ and let $E^*_{x, y}$ be the collection of branching pairs $(u,v)\in \mathbf{BP}$ such that both $u$ and $v$ are white and let $e = |E^*_{x, y}|$. Note that by Definition~\ref{def: psi}, we can get $e$ from $\psi_4(\mathsf{C})$. 
    
    We claim that the number of white components in $G_{l_x,l_y}$ is $\sum_{(u,v)\in \mathbf{BE}}\hat{\psi}_5(\mathsf{C})_{u,v}+k-e$. Suppose that  $G_{l_x,l_y}=(V_{x,y},E_{x,y})$.
    We consider the graph $G_{l_x,l_y}^\prime=(V_{x,y},E_{x,y}\setminus E_{x,y}^*)$. By Definition \ref{def: hat psi 5}, we get that the number of white components in $G_{l_x,l_y}^\prime$ is  $\sum_{(u,v)\in \mathbf{BE}}\hat{\psi}_5(\mathsf{C})_{u,v}+k$ since each white branching vertex in $G_{l_x,l_y}^\prime$ is a white component itself in addition to white components recorded in $\hat \psi_5(\mathsf C)$. Next, we will add edges in $E_{x,y}^*$ sequentially to $G_{l_x,l_y}^\prime$ (in an arbitrary order) and keep track of the change in the number of white components. For any $\{u,v\}\in E_{x,y}^*$, since both $u$ and $v$ are white, adding $\{u,v\}$ to $G_{l_x,l_y}^\prime$ connects two white components and thus decreases the number of white components by 1. In conclusion, the number of white components in $G_{l_x,l_y}$ is the number of white components in $G_{l_x,l_y}^\prime$ minus $e$ and thus is $\sum_{(u,v)\in \mathbf{BE}}\hat{\psi}_5(\mathsf{C})_{u,v}+k-e$.
\end{proof}
\begin{figure}[ht]
\centering  
\subfloat[Case 1]{
\includegraphics[width=0.18\linewidth]{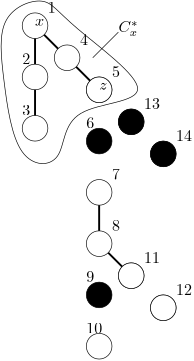}}\hspace{0.05\linewidth}
\subfloat[Case 2]{
\includegraphics[width=0.18\linewidth]{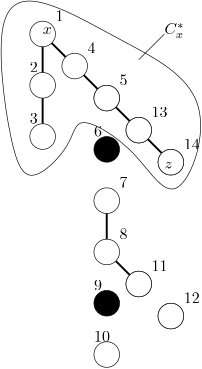}}\hspace{0.05\linewidth}
\subfloat[Case 3]{
\includegraphics[width=0.18\linewidth]{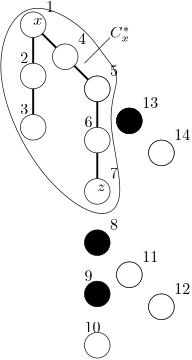}}\hspace{0.05\linewidth}
\subfloat[Case 4]{
\includegraphics[width=0.20\linewidth]{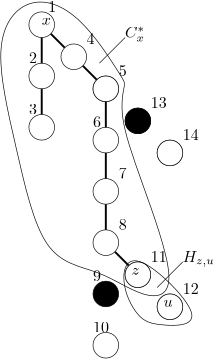}}
\caption{An illustration of the graph $H_{\mathsf{C}}^*|_{J'}$ in Lemma~\ref{lem: reduce to 4+5'}.}
\label{fig: H* on J'}
\end{figure}
\begin{figure}[ht]
\centering  
\subfloat[Case 1]{
\includegraphics[width=0.18\linewidth]{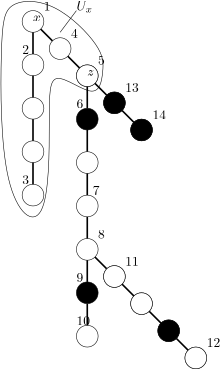}}\hspace{0.05\linewidth}
\subfloat[Case 2]{
\includegraphics[width=0.18\linewidth]{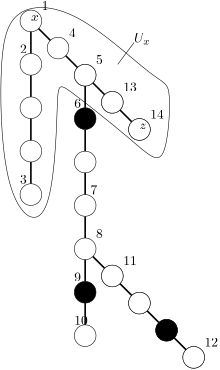}}\hspace{0.05\linewidth}
\subfloat[Case 3]{
\includegraphics[width=0.18\linewidth]{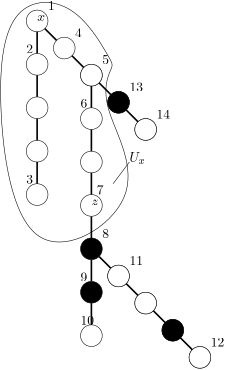}}\hspace{0.05\linewidth}
\subfloat[Case 4]{
\includegraphics[width=0.18\linewidth]{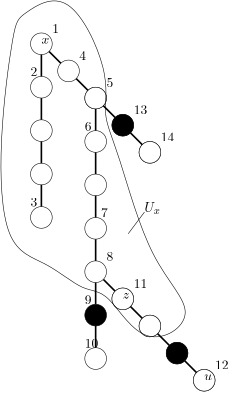}}
\caption{An illustration of the graph $G_{l_x,l_y}$ in Lemma~\ref{lem: reduce to 4+5'}.}
\label{fig: G A for H*}
\end{figure}
\begin{lem}\label{lem: reduce to 4+5'}
    Let $\mathsf{C}=(C_1,\cdots,C_r)$ be a sequence of disjoint odd clusters. We can recover $\psi_5(\mathsf{C})$ from $\psi_4(\mathsf{C})$ and $\hat{\psi}_5(\mathsf{C})$.
\end{lem}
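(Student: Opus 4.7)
My plan is to recover $\psi_5(\mathsf{C})_{x,y}$ for each branching pair $(x,y)$ by recursively decomposing $G_{l_x, l_y}$ around $x$ and using Lemma~\ref{lem: number of white cluster by 4 +5'} to supply all the total white-component counts that the recursion will need. For a fixed branching pair $(x,y)$, I let $z_1 = x_+$ and $z_2, \ldots, z_s$ denote the far neighbors of $x$ of order strictly less than $l_y$ listed in DFS order, and I write $T_{z_i}$ for the subtree of $G_{l_x, l_y}$ rooted at $z_i$ (the connected component of $G_{l_x, l_y} \setminus \{x\}$ containing $z_i$). The colors of $x$ and of each $z_i$ are read off from $\psi_4(\mathsf{C})$ since $x \in J$ and each $z_i \in N(J)$, and Lemma~\ref{lem: number of white cluster by 4 +5'} applied to the pair $(z_i, z_{i+1})$ for $i < s$ and to $(z_s, y)$ for $i = s$ supplies $N_i := N(T_{z_i})$, the total number of white components in $T_{z_i}$.

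\textbf{Key recursion.} I introduce, for each subtree $T$ of $G_A$ rooted at its minimum-order vertex $v$, the auxiliary quantity $\Theta(T)$ equal to the number of white components of $T$ whose maximum order strictly exceeds that of the white component of $T$ containing $v$, with the convention $\Theta(T) = N(T)$ when $v$ is black, so that $\psi_5(\mathsf{C})_{x,y} = \Theta(G_{l_x, l_y})$. Since the order ranges of $T_{z_1}, \ldots, T_{z_s}$ are disjoint intervals ordered by $i$, the max order of $C_{x,y}$ lies inside $T_{z_{i^\ast}}$ whenever $x$ is white and $i^\ast := \max\{i : z_i \text{ is white}\}$ exists, which yields the assembly identity
\begin{equation*}
\psi_5(\mathsf{C})_{x,y} \;=\;
\begin{cases}
\sum_{i=1}^{s} N_i & \text{if } x \text{ is black, or } x \text{ is white with no } z_i \text{ white,}\\
\sum_{i > i^\ast} N_i + \Theta(T_{z_{i^\ast}}) & \text{if } x \text{ is white and } i^\ast \text{ exists.}
\end{cases}
\end{equation*}
The same identity, applied inside $T_{z_{i^\ast}}$ at its first branching vertex, computes $\Theta(T_{z_{i^\ast}})$ in terms of strictly smaller subtrees, so the recursion terminates once it reaches subtrees that contain no branching vertex of $G_A$. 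For such a base-case subtree $T$, $T$ coincides with a single $H$-path $H_{u,v}$ (where $u$ is the root of $T$), $\hat{\psi}_5(\mathsf{C})_{u,v}$ reports its total white-component count, and $\psi_4$ supplies the color of $u$, giving $\Theta(T)$ immediately as the total minus one when $u$ is white and as the total when $u$ is black.

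\textbf{Main obstacle.} The subtle point is handling the recursion when the root $z$ of a subtree $T$ lies in $N(J) \setminus J$ but $T$ nonetheless contains a deeper branching vertex $v^\ast$: I must walk along the initial linear segment from $z$ down to the neighbor $w$ of $v^\ast$ on that segment, and then decompose $T$ at $v^\ast$. The crucial observation that makes this work is that this initial segment coincides with the full $H$-path $H_{z, w}$, since $w \in N(J)$ and the endpoints of every component of $G_A \setminus J$ are by definition exactly the elements of $N(J) \cup L$; hence $\hat{\psi}_5$ already reports its white-component count exactly, while $\psi_4$ supplies the colors of $v^\ast$ and all its neighbors in $G_A$. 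Combining this with the assembly identity above lets the recursion pass through $H_{z,w}$ and proceed from $v^\ast$, and iterating this reconstruction over all branching pairs $(x,y)$ recovers the full sequence $\psi_5(\mathsf{C})$ from $\psi_4(\mathsf{C})$ and $\hat{\psi}_5(\mathsf{C})$.
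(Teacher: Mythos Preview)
Your recursive decomposition is correct and gives a genuinely different proof from the paper's. The paper does not recurse: it introduces an auxiliary ``skeleton'' graph $H^*_{\mathsf C}$ on the vertex set $J\cup N(J)\cup L$ (with edges recording when two skeleton vertices lie in the same white component), locates in one shot the maximal-order skeleton vertex $z$ reachable from $x$ inside $G_{l_x,l_y}$, and then reads off $\psi_5(\mathsf C)_{x,y}$ by a four-case analysis according to whether $z\in J$, $z\in L$, or $z\in N(J)$, invoking Lemma~\ref{lem: number of white cluster by 4 +5'} exactly once. Your approach instead descends through successive branching vertices, calling Lemma~\ref{lem: number of white cluster by 4 +5'} at every level; this is less slick but arguably more transparent, and it sidesteps the need to reason about $H^*_{\mathsf C}$ at all. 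One pleasant feature of your route is that it never asks for the color of a leaf outside $N(J)$, only for the color of the current root, which is always in $N(J)$ and hence recorded in $\psi_4$.

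One place where you should spell things out more carefully is the ``main obstacle'' paragraph. When the root $z\in N(J)\setminus J$ and $T$ contains a first branching vertex $v^\ast$ beyond the segment $H_{z,w}$, there are really two subcases that your phrase ``lets the recursion pass through $H_{z,w}$'' elides. If the white component $U_z$ of $z$ actually reaches $v^\ast$ (equivalently $z,w$ are both white, $\hat\psi_5(\mathsf C)_{z,w}=1$, and $v^\ast$ is white --- all of which you can read from $\psi_4$ and $\hat\psi_5$), then indeed $\Theta(T)=\Theta(T'')$ where $T''$ is the subtree rooted at $v^\ast$, and your assembly identity applies at $v^\ast$. But if $U_z$ terminates inside $H_{z,w}$, then $U_z$ occupies an initial segment of orders in the line $H_{z,w}$, so every other white component of $T$ has strictly larger order and $\Theta(T)=N(T)-1$; here no further recursion is needed, and $N(T)$ is already available from the previous level's call to Lemma~\ref{lem: number of white cluster by 4 +5'}. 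Stating these two formulas explicitly would make the argument complete.
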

\begin{proof}
     Fix $(x,y)$ as a branching pair and let $l_x,l_y$ be the order of $x,y$ in $G_A$ respectively.
     let $U_x$ be the white component containing $x$ in $G_{l_x,l_y}$. We want to recover the number of white components in $G_{l_x,l_y}$ with orders bigger than that of $U_x$ from $\psi_4(\mathsf{C})$ and $\hat{\psi}_5(\mathsf{C})$.  If $x$ is black then $U_x=\emptyset$, and thus $\psi_5(\mathsf{C})_{x,y}$ is the number of white components in $G_{l_x,l_y}$. By Lemma~\ref{lem: number of white cluster by 4 +5'}, we can recover $\psi_5(\mathsf{C})_{x,y}$ from $\psi_4(\mathsf{C})$ and $\hat{\psi}_5(\mathsf{C})$.
    
    If $x$ is white then we want to first determine the location of the vertex which has the maximal order in $U_x$. Recall $H_{\mathsf{C}}^*$ from Definition~\ref{def: graph H^*}. We can give $V(H_{\mathsf{C}}^*)$ an order according to their order in $G_A$. Let $J'=\big(J\cup N(J)\cup L\big)\cap V(G_{l_x,l_y})$ and let $C^*_x$ be the component containing $x$ in $H_{\mathsf{C}}^*|_{J'}$. By Remark \ref{rmk: graph H^*}, we get that $V(C^*_x)\subset V(U_x)$. Let $z$ be the vertex in $C^*_x$ with the maximal order, then $z$ has the maximal order in $J'\cap U_x$. Let $l_z$ be the order of $z$ in $G_A$.
    
    Let $w$ be the vertex in $U_x$ with the maximal order. Then we claim that either $w=z$ or there exists a vertex $u$ with order larger than $z$ such that $z,u$ is a pair of branching endpoints and $w\in H_{z,u}$. If $w\in J\cup N(J)\cup L$, since $C^*_x\subset U_x\subset V(G_{l_x,l_y})$, we get that $w\in C^*_x$ and thus $w=z$. If $w\notin J\cup N(J)\cup L$, then there exists a pair of branching endpoints $v,u$ such that $w\in H_{v,u}$ (note that potentially one may have $v = u$ due to our convention in Definition \ref{def: hat psi 5}). Without loss of generality, we assume the order of $v$ is not larger than the order of $u$. Since $w\notin J\cup N(J)\cup L$, we get that $w\neq v$ and $w\neq u$, implying that $u\neq v$ and thus the order of $v$ is smaller than the order of $u$. Note that $H_{v,u}$ is a line. We get that the order of $v$ is larger than the order of $U_x \setminus H_{v,u}$ (recalling that the order of a set is the maximal order over all vertices in the set). Combined with Remark~\ref{rmk: graph H^*} and the fact that $C_x^*\cap H_{v,u}=\{v\}$, it yields that $v$ is the vertex with maximal order in  $C^*_x$ and thus $v=z$. In conclusion, we finish the proof of the claim. 
    
    Next, we consider the following four cases.\begin{enumerate}
    \item If $z\in J$, then by the aforementioned claim and the fact that branching endpoints are contained in $\big(N(J)\cup L\big)\setminus J$, we get that $z$ is the vertex in $U_x$ with the maximal order (see Case 1 in Figures~\ref{fig: H* on J'} and \ref{fig: G A for H*}).   Applying Lemma~\ref{lem: tree order structure}, we get that for any white component $C$ in $G_{l_x,l_y}$ with order bigger than $l_z$, it is contained in $G_{l_z+1,l_y}$. Thus we get that the number of white components with orders bigger than $U_x$ equals the number of white components in $G_{l_z,l_y}$ minus 1. By Lemma~\ref{lem: number of white cluster by 4 +5'} and the assumption that $z\in J$, we get the number of white components in $G_{l_z,l_y}$ from $\psi_4(\mathsf{C})$ and $\hat{\psi}_5(\mathsf{C})$.
        \item If $z\in L$, let $H_{z,z'}$ be the connected component in $G_A\setminus J$ containing $z$ (potentially one might have $z = z'$). Then by the aforementioned claim and the fact that the order of $z$ is not smaller than the order of $z'$, $z$ is the vertex in $U_x$ with the maximal order (see Case 2 in Figures~\ref{fig: H* on J'} and \ref{fig: G A for H*}).  Applying Lemma~\ref{lem: tree order structure}, we get that for any white component $C$ in $G_{l_x,l_y}$ with order bigger than $l_z$, it is contained in $G_{l_z+1,l_y}$. Thus we get that the number of white components with orders bigger than $U_x$ equals the number of white components in $G_{l_z+1,l_y}$. Let $u$ be the vertex with order $l_z+1$. By Lemma~\ref{lem: number of white cluster by 4 +5'} and the fact that $u\in N(J)$, we get the number of white components in $G_{l_u,l_y}$ from $\psi_4(\mathsf{C})$ and $\hat{\psi}_5(\mathsf{C})$.
        \item  If $z\in N(J)$ and $z$ is the vertex with maximal order in $U_x$ (see Case 3 in Figures~\ref{fig: H* on J'} and \ref{fig: G A for H*}). Applying Lemma~\ref{lem: tree order structure}, we get that for any white component $C$ in $G_{l_x,l_y}$ with order bigger than $l_z$, it is contained in $G_{l_z,l_y}$.
        Furthermore, since the white component containing $z$ in $G_{l_z,l_y}$ is itself, we get that the white component containing $z$ in $G_{l_z,l_y}$ is the smallest white component in $G_{l_z,l_y}$.
        Thus we get that the number of white components in $G_{l_x, l_y}$ with orders bigger than that of  $U_x$ equals the number of white components in $G_{l_z,l_y}$ minus 1. By Lemma~\ref{lem: number of white cluster by 4 +5'} and the fact that $z\in N(J)$, we get the number of white components in $G_{l_z,l_y}$ from $\psi_4(\mathsf{C})$ and $\hat{\psi}_5(\mathsf{C})$.
        \item  If $z\in N(J)$ and $z$ is not the vertex with maximal order in $U_x$, then by the aforementioned claim, there exists a vertex $u$ with order larger than $z$ such that $z,u$ is a pair of branching endpoints and the vertex in $U_x$ with the maximal order is contained in $H_{z,u}$ (see Case 4 in Figures~\ref{fig: H* on J'} and \ref{fig: G A for H*}). Applying Lemma~\ref{lem: tree order structure}, we get that for any white component $C$ in $G_{l_x,l_y}$ with order bigger than $l_z$, it is contained in $G_{l_z,l_y}$.
        Furthermore, since the white component containing $z$ in $G_{l_z,l_y}$ is contained in $H_{z,u}$, we get that the white component containing $z$ in $G_{l_z,l_y}$ is the smallest white component in $G_{l_z,l_y}$.
       Thus we get that the number of white components in $G_{l_x, l_y}$ with orders bigger than that of  $U_x$ equals the number of white components in $G_{l_z,l_y}$ minus 1. By Lemma~\ref{lem: number of white cluster by 4 +5'} and the fact that $z\in N(J)$, we get the number of white components in $G_{l_z,l_y}$ from $\psi_4(\mathsf{C})$ and $\hat{\psi}_5(\mathsf{C})$.
        \end{enumerate}  
    In conclusion, in all these cases we can recover the number of white components with orders bigger than $U_x$. 
\end{proof}
Now we are ready to prove Lemma~\ref{lem: psi 4 and 5 enumeration bound}.
\begin{proof}[Proof of Lemma~\ref{lem: psi 4 and 5 enumeration bound}]
    By Lemma~\ref{lem: reduce to 4+5'}, it suffices to upper-bound 
$|\psi_4(\sC(A,r_1,m_1,r_2,m_2))|$ and $|\hat{\psi}_5(\sC(A,r_1,m_1,r_2,m_2))|$. Recalling Item \ref{item: two type 2} of Lemma~\ref{lem: enumeration 2}, we only need to bound the enumeration of $\hat{\psi}_5$. Note that there are at most $\sum_{x\in J}(\deg_{G_A}(x)-1)+1\le 2\xi^*(G_A)+1$ connected components in $G_A\setminus J$ and the number of white components in those connected components are at most $r_1+r_2+2\xi^*(G_A)$, we get that the enumeration of $\hat{\psi}_5$ is upper-bounded by $\binom{r_1+r_2+4\xi^*(G_A)+1}{2\xi^*(G_A)+1}$.
\end{proof}

\section{Proof ingredients of Theorem \ref{thm: minus separate product}}
In this section, we provide the postponed proofs for ingredients employed earlier, that is,  Lemmas \ref{lem: minimal minus subset reduction}, \ref{lem: induced cluster}, \ref{lem: minimality}, \ref{lem: leaf trifurcation relation}, \ref{lem: bound for F tri}, \ref{lem: extra points}, \ref{lem: number of extra points}, \ref{lem: multiple cluster cannot merge}, \ref{lem: C cluster to sequence reduction}, \ref{lem: dtri and cluster number relation} and \ref{lem: bound for the exponents in the multi cluster case}.
\begin{proof}[Proof of Lemma~\ref{lem: minimal minus subset reduction}]
Let $\Sigma(A,C)$ denote the collection of configurations $\sigma$ on $G$ such that there exists a unique set $B\subset N(A)$ satisfying $ \Prr(A,B)=C$ and $\sigma_B=-1,\sigma_{N(A)\setminus B}=1$. Then we get that the left-hand side of \eqref{eq: minimal minus subset reduction} equals to $\P(\sigma(T-1)\in\Sigma(A, C))$. Let $\Sigma'(A,C)$ denote the collection of configurations $\sigma$ on $G$ such that there exists a $B\subset \sB(A,C)$ satisfying $\sigma_B=-1$. Then we get that the right-hand side of \eqref{eq: minimal minus subset reduction} is at least $\P(\sigma(T-1)\in\Sigma'(A,C))$. Thus it suffices to prove that $\Sigma(A,C)\subset\Sigma'(A,C)$. 

For any configuration $\sigma\in \Sigma(A,C)$, let $B = \{y\in N(x): \sigma_y = -1\}.$ Let $B'\in\sB(A)$ be the smallest subset of $B$ such that $\Prr(A,B')=\Prr(A,B)=C$. Then we get that $B'\in \sB(A,C)$. Since $\sigma|_{B'} = -1,$ we have $\sigma\in \Sigma'(A,C)$. Therefore, we complete the proof of Lemma~\ref{lem: minimal minus subset reduction}.
\end{proof}
\begin{proof}[Proof of Lemma~\ref{lem: induced cluster}]
Let $x,y$ be two vertices in $D$. Since $C$ induces $D$, there are at least two neighbors of $x$ in $C$ and we let $x'\in C$ be one of the neighbors that maximizes the graph distance with $y$. Similarly, we let $y'\in C$ be a neighbor of $y$ that maximizes the distance with $x$. Since $C$ is connected, there exists an odd path in $C$ connecting $x'$ and $y'$, which we denote as $x' = z_1, z_2, \ldots, z_s= y'$ for some $s\geq 1$ (note that this means $z_i$ and $z_{i+2}$ has graph distance 2). Let $v_i = N(z_i) \cap N(z_{i+1})$. Then we see that $v_i\in D$ for $1\leq i\leq s-1$ and $v_1, \ldots, v_{s-1}$ is an even path. By the maximality when choosing $x'$ and $y'$, we see that $\{x, y\} = \{v_1, v_{s-1}\}$, completing the proof of the proposition. 
\end{proof}
\begin{proof}[Proof of Lemma~\ref{lem: minimality}]
We prove this by contradiction. 
   Suppose there exists $x\in C\setminus \mathsf{A}$. Let $y,z$ be two neighbors of $x$ in $B$. Let $G_y$ and $G_z$ be the  connected component of $G|_{V\setminus \{x\}}$ containing $y$ and $z$ respectively. Since $x\notin \mathsf{A}$ and $\mathsf{A}$ is an odd cluster, we get that $\mathsf{A}$ is contained in a connected component of $G|_{V\setminus \{x\}}$. Without loss of generality, we assume $G_y\cap \mathsf{A}=\emptyset$. Then we get that $\Prr(\mathsf{A},B)=\Prr(\mathsf{A},B\setminus\{y\})$ and this contradicts the minimality of $B$.
   \end{proof}
\begin{proof}[Proof of Lemma~\ref{lem: leaf trifurcation relation}]
We first show that if $C$ is an odd set and $B$ is an even set such that each vertex in $C$ has at least two neighbors in $B$, then \begin{equation}\label{eq: leaf trifurcation relation tmp}
        \xi^*(C,B)+|C|\le |B|-k
    \end{equation} where $k$ is the number of connected components in $H = G_{B\cup C}$. Without loss of generality, we assume $k=1$ because the general case follows from considering each connected component separately. 

    We apply induction to $\xi^*(C,B)$. If $\xi^*(C,B)=0$, then  we get that each vertex in $C$ has exactly two neighbors in $B$. Therefore, the graph $G_{B\cup C}$ is a line and thus $|B| = |C| + 1$, verifying \eqref{eq: leaf trifurcation relation tmp} in this base case.

    Now we suppose \eqref{eq: leaf trifurcation relation tmp} holds when $\xi^*(C,B)\le D-1$ and we consider the case for $\xi^*(C,B)= D\ge 1$. As before, we order vertices in $C\cup B$ by a depth-first search starting from a leaf of $H$. 
    Let $v$ be the first branching vertex in $C$. Let $v^-$ and $v^+$ be the neighbors of $v$ so that the orders for $v^-, v, v^+$ are increasing consecutive integers. Let $u$ be a far neighbor of $v$. Let $H^\prime$ be the graph obtained from $H$ by removing the edge $\{u, v\}$. Since $v^-$ and $v^+$ are the neighbors of $v$ in $B$ and $v$ is the only odd vertex that has different degrees in $H$ and $H^\prime$, we get that each odd vertex has at least two neighbors in $H^\prime$. Note that $H^\prime$ has two connected components. We denote the odd vertices in each component as $C_1, C_2$ and the even vertices in each component as $B_1, B_2$. Then we have \begin{equation}\label{eq: subgraph xi relation}
        \xi^*(C_1,B_1)+\xi^*(C_2,B_2)\le \xi^*(C,B)-1.
    \end{equation}  Applying the induction hypothesis to $C_1, B_1$ and $C_2, B_2$ gives \begin{align}
        &\xi^*(C_1, B_1)+|C_1|\le |B_1|-1,~~\mbox{and}\label{eq: induction hypothesis in 2.25 1}\\
        &\xi^*(C_2, B_2)+|C_2|\le |B_2|-1.\label{eq: induction hypothesis in 2.25 2}
    \end{align} The desired result thus follows from combining \eqref{eq: induction hypothesis in 2.25 1} and \eqref{eq: induction hypothesis in 2.25 2} with \eqref{eq: subgraph xi relation}.

    Now we are ready to prove \eqref{eq: leaf trifurcation relation}. Let $C=\Prr(\mathsf{A},B)$. Then each vertex $x\in \Xi_p(\mathsf{A},B)$ has $p$ neighbors in $B$ and thus $x\in C$. Hence we get that $\Xi_p(\mathsf{A},B)=\Xi_p(C,B)$ and thus $\xi^*(\mathsf{A},B)=\xi^*(C,B)$. Furthermore, vertices in different even clusters of $B$ will not be connected in $G|_{B \cup C}$. Therefore, \eqref{eq: leaf trifurcation relation tmp} implies \eqref{eq: leaf trifurcation relation}. 
\end{proof}
\begin{proof}[Proof of Lemma~\ref{lem: bound for F tri}]
    Recall \eqref{eq: def of F}. For general $r_1,r_2$, we observe that $\binom{n-m_1-m_2+r_1+r_2+p}{r_1+r_2+p}\le 2^{n-m_1-m_2+r_1+r_2+p}$ and thus we get that
\begin{align}
    F(r_1,r_2)&\le \sum_{m_1,m_2\ge 0}2^{r_1+r_2+p}\cdot \binom{m_1-1}{r_1-1}\cdot\binom{m_2-1}{r_2-1}\cdot 2^{-m_2}\cdot(1-\frac{\eps}{2})^{m_1+m_2}\nonumber\\&\le 2^{r_1+r_2+p}\cdot\sum_{m_1\ge 0}\binom{m_1-1}{r_1-1}\cdot(1-\frac{\eps}{2})^{m_1}\cdot\sum_{m_2\ge 0}\binom{m_2-1}{r_2-1}\cdot 2^{-m_2}.
    \label{eq: general F upper bound}
\end{align}
Note that for a non-negative integer $a,n$ and $0<b<1$, we have \begin{align}
    \sum^{\infty}_{m=0}\binom{m+a-1}{a-1}b^{m}&=(1-b)^{-a},\label{eq: binom equation 1}\\
    \sum^{n}_{m=0}\binom{m+a}{a}&=\binom{n+a+1}{a+1}.\label{eq: binom equation 2}
\end{align}
Combining \eqref{eq: general F upper bound} and \eqref{eq: binom equation 1}, we get that \begin{equation*}
    F(r_1,r_2)\le 2^{r_1+r_2+p}\cdot(\frac{2}{\eps})^{r_1}\cdot2^{r_2}= 2^{2r_1+2r_2+p}\eps^{-r_1}.
\end{equation*}
Next, we consider the case that $r_1=1$ and $r_2=0$. By \eqref{eq: binom equation 1}, we have
\begin{align}
     F(1,0)&= \sum_{m_1\ge 0}\binom{n-m_1+p+1}{1+p}\times2^{-n+m_1}\cdot(1-\frac{\eps}{2})^{m_1}\nonumber\\&\le \sum_{m_1\ge 0}\binom{n-m_1+p+1}{1+p}\times2^{-n+m_1}\stackrel{\eqref{eq: binom equation 1}}{\le}2^{p+2}.\nonumber
\end{align}
Finally, we consider the case that $r_1=0$ and $r_2=1$. By \eqref{eq: binom equation 2}, we have
\begin{align}
    F(0,1)&=\sum_{m_2\ge 0}\binom{n-m_2+p+1}{1+p}\times2^{-n}\cdot(1-\frac{\eps}{2})^{m_2}\nonumber\\&\le \sum_{m_2\ge 0}\binom{n-m_2+p+1}{1+p}\times2^{-n}\stackrel{\eqref{eq: binom equation 2}}{=}\binom{n+p+1}{p+2}\times 2^{-n}\le 2^{p+1}.\nonumber\qedhere
\end{align}
\end{proof}
\begin{proof}[Proof of Lemma~\ref{lem: extra points}]
    For any $x\in C\setminus A$, let $y_1,y_2$ be two neighbors of $x$ in $B$. By the minimality of $B$, we get that both $y_1$ and $y_2$ have a neighbor in $A$. Let $z_1$ and $z_2$ be the neighbor of $y_1$ and $y_2$ in $A$ respectively. Since $x\notin A$, we get that $z_1,z_2$ are in different odd clusters in $A$. In conclusion, we have that $x$ is a common odd neighbor of two odd clusters in $A$ and we complete the proof of Lemma~\ref{lem: extra points}.
\end{proof}
\begin{proof}[Proof of Lemma~\ref{lem: number of extra points}]
Construct a graph $H$ on $[k]$ where $\{i, j\}$ is an edge in 
$H$ if and only if there exists $x\in W$ such that $i, j$ are the smallest integers satisfying $\dist(A_i, x) = 2$ and $\dist(A_j, x) = 2$.
We claim that $H$ has no cycle. Otherwise, we suppose there exists a shortest distinct sequence $i_1,i_2,\cdots,i_{s}$ which forms a cycle in $H$ such that $\dist(A_{i_j},x_j)=\dist(A_{i_{j+1}},x_j)=2$ for some $x_j\in W,~ 1\le j\le s$ where we used the convention $i_{s+1}=i_1$. We consider the graph $G|_{V\setminus \{x_s\}}$, and let $G_1$ and $G_s$ be the two connected components of $G|_{V\setminus \{x_s\}}$ containing $A_{i_1}$ and $A_{i_s}$. By the choice of shortest in defining edges of $H$, we see that $x_1, \ldots, x_s$ are distinct vertices. Thus, $A_{i_j}$ and  $A_{i_{j+1}}$ are in the same connected component of $G|_{V\setminus \{x_s\}}$ for all $1\leq j\leq s-1$. This contradicts to the fact that $A_{i_1}$ and $A_{i_s}$ are in different connected components in $G|_{V\setminus \{x_s\}}$, completing the verification of our claim.
As a result, the number of edges in $H$ is at most $k-1$, implying that $|W| \leq k-1$.
\end{proof}
\begin{proof}[Proof of Lemma~\ref{lem: multiple cluster cannot merge}]
    Fix $B\in\sB_{W^*}$. Suppose that $B$ is the disjoint union of non-adjacent even clusters $B_1, \ldots, B_s$. For $1\leq i\leq s$, let $C_i$ be the odd cluster induced by $B_i$. We will first show that for any $C_i$, there exists an odd cluster $A_j^*$ such that $C_i\subset A_j^*$. By Lemma~\ref{lem: extra points}, we get that $C_i\setminus A\subset W$. Recalling Definition~\ref{def: multi cluster connect}, we get that $C_i\setminus A\subset W^*$ and thus $C_i\subset A\cup W^*$. Since $C_i$ is odd connected, we get that $C_i$ is contained in a connected component of $A\cup W^*$. Thus there exists  an odd cluster $A_j^*$ such that $C_i\subset A_j^*$.

    For any $1\le j\le k^*$, let $I_j=\{i\mid C_i\subset A_j^*\}$. Let $B^*_j=\cup_{i\in I_j}B_i$. It is easy to get that $B^*_j~(1\le j\le k^*)$ satisfies Condition 3. 
    For Condition 2, note that the even clusters $B_1, B_2, \cdots, B_s$ are not adjacent and that $B_j^*$ is the union of some even clusters $B_i$. Thus we get that $B^*_j~(1\le j\le k^*)$ satisfies Condition 2. For Condition 1, we prove using contradiction. If otherwise, there exists $B^\prime_i\subsetneq B_i^*$ such that $\Prr(A_i^*,B^\prime_i)=\Prr(A_i^*,B^*_i)$. Let $B^\prime=\cup_{j\neq i}B_j^*\cup B^\prime_i$. By Condition 2, we get that  $B_j^*$ is not adjacent to $B_i^*$ and thus $\Prr(A_i^*,B^\prime)=\Prr(A_i^*,B)$, which contradicts the fact that $B\in \sB_{W^*}$.
\end{proof}
\begin{proof}[Proof of Lemma~\ref{lem: C cluster to sequence reduction}]
Note that for any $C\in\cC$, we have $\sB(A \cup W,C\cup W^*)\subset\sB_{W^*}$. For $B\in \mathscr B_{W^*}$, let $B_1^*,\cdots,B_{k^*}^*$ be defined as in Lemma~\ref{lem: multiple cluster cannot merge} and let $B_{i,1},\cdots,B_{i,l_i}$ be the even connected components of $B_i^*$. Let $\mathsf{C}_i=(C_{i,1},\cdots,C_{i,l_i})$ be the sequence of odd clusters induced by $B_{i,1},\cdots,B_{i,l_i}$, arranged in the increasing order (recalling that the order of a set is the maximal order over all vertices in the set). Then we have $B\in \frB\Big(A\cup W,\U(\mathsf{C}_1,\cdots,\mathsf{C}_{k^*})\Big)$. 
Supposing that $\mathsf{C}_i\in \sC(A_i^*,r_{i, 1}, r_{i, 2}, m_{i, 1}, m_{i, 2})$ for each $1\leq i\leq k^*$, it suffices to prove that $\mathsf{C}_i\in \sC(A_i^*,r_{i, 1}, r_{i, 2}, m_{i, 1}, m_{i, 2},\fS_i).$

     Since $S_i(p)\subset W$, we get that for any connected component $A_{i,\#}$ of $A_i^*\setminus \cup_{p=3}^dS_i(p)$, there exists an $A_q$ such that $A_q\subset A_{i,\#}$. By the  definition of $B_i^*$, we get that for $\Prr(A_{i_1}^*,B_{i_2}^*)= \emptyset$ any $i_1\neq i_2$. Since $\Prr(A_j,B)\neq \emptyset$ for $1\le j\le k$, we get that $\Prr(A_q,B_i^*)\neq \emptyset$. 
    Letting $C_i=(\cup_{j=1}^{l_i}C_{i,j})$, we get that $C_i$ is the collection of odd vertices in $A\cup W$ induced by $B^*_i$. Therefore we get that $C_i\cap A_{q}\neq \emptyset$ and thus $C_i\cap A_{i,\#}\neq \emptyset$. In conclusion, we get that $\mathsf{C}_i\in \sC(A_i^*,r_{i, 1}, r_{i, 2}, m_{i, 1}, m_{i, 2},\fS_i)$ and this completes the proof of \eqref{eq: C cluster to sequence reduction 1}.
    
In order to prove \eqref{eq: C cluster to sequence reduction 2}, it suffices to prove $B^*_i\in \frB(A_i^*,\mathsf{C}_i)$. It is clear that $B_i^*$ is a disjoint union of $B_{i,1},\cdots,B_{i,l_i}$ and $C_{i,j}$ is induced by $B_{i,j}$. Furthermore, by Lemma~\ref{lem: multiple cluster cannot merge}, we get that $B^*_i\in\mathscr{B}(A_i^*)$. Thus, in conclusion, we get that $B^*_i\in \frB(A_i^*,\mathsf{C}_i)$.
\end{proof}
\begin{proof}[Proof of Lemma~\ref{lem: dtri and cluster number relation}]
Let $\mathsf{C}\in \sC(\mathsf{A},r_1,r_2,m_{1},m_{2},\fS)$ and $B\in\sB(\mathsf{A},\mathsf{C})$. Let $C$ be the collection of odd vertices induced by $B$ and let $\hat{G}$ be the graph restricted on $B\cup C$. Then $r_1+r_2$ is the number of connected components in $\hat{G}$. Since $C$ is the union of clusters in $\mathsf{C}$, we have that $\cup_{p=3}^dS(p)\subset C$.  For each $x\in S(p)$, we perform the following procedure: Let $y_1,\cdots,y_p$ be the neighbors of $x$ such that $N(y_i)\cap \mathsf{A}\setminus\{x\}\neq \emptyset$. If $y_i\in B$, then we do nothing; if $y_i\notin B$, then we remove the edge of $\{x,y_i\}$ from $G$. We denote by $G^{\#}$ the final graph we obtain at the end of our procedure. Since for any removed edge $\{x,y\}$, we have $y\notin B$ and thus $\{x,y\}\notin E(\hat{G})$. Let $\hat{G}^{\#}=G^{\#}|_{B\cup C},$ then we get that $E(\hat{G})\subset E(\hat{G}^{\#})$. Furthermore, since $S(p)\subset C$, we get that $\deg_{\hat{G}}(x)\ge 2$ and thus $x$ is not isolated in $G^{\#}.$ Note that for any $x\in S(p)$, the number of removed edges from $x$ is $p-\deg_{\hat{G}}(x)$. Thus we get that the number of edges we have removed is 
$$\sum_{p\ge 3}\sum_{x\in S(p)}\big(p-\deg_{\hat{G}}(x)\big)=\sum_{p\ge 3}\sum_{x\in S(p)}\big((p-2)-(\deg_{\hat{G}}(x)-2)\big)\ge \kappa(\fS)-\xi^*(\mathsf{A},B).$$ 
Since $G$ is a tree, we get that the final graph $G^{\#}$ has $q\ge \kappa(\fS)-\xi^*(\mathsf{A},B)+1$ connected components, denoted as $G^{\#}_1,\cdots,G^{\#}_q$. Let $S=\cup
_{p=3}^dS(p)$ and let $G_1,\cdots,G_s$ be the connected components of $G|_{V(G)\setminus S}$. Then by the construction of $G^{\#}$, we get that for any connected component $G^{\#}_i$ of $G^{\#}$, there exists $1\leq j\leq s$ such that $V(G_j)\subset V(G^{\#}_i)$. Recalling Definition~\ref{def: multi cluster connect 1}, we have $C\cap V(G_j)\neq \emptyset$ and thus $C\cap V(G_i^{\#})\neq \emptyset$ for any $1\le i\le q$.  Therefore, we get that the number of connected components of $\hat{G}^{\#}$ is at least $q$. Combining with the fact that $V(\hat{G})=V(\hat{G}^{\#})$ and $E(\hat{G})\subset E(\hat{G}^{\#})$, we get that the number of connected components of $\hat{G}$ is at least $q$. Recalling that the number of connected components of $\hat{G}$ is $r_1+r_2$,
we get that $r_1+r_2\ge q\ge \kappa(\fS)-\xi^*(\mathsf{A},B)+1$.
\end{proof}
\begin{proof}[Proof of Lemma~\ref{lem: bound for the exponents in the multi cluster case}]
We first prove \eqref{eq: cluster number edge relation}. We will add vertices in $W^*$ sequentially to $A$ (in an arbitrary order) and keep track of the change in the number of connected components. Recall the definition of $S_i(p)$, we get that for each $x \in W^*$ which is a type $p$ double trifurcation in $A$, adding $x$ to $A$ will connect $p$ odd connected components and thus decrease the number of connected components by $p-1$. Furthermore, if $x\in W^*$ is not a double trifurcation in $A$, adding $x$ to $A$ will only connect two odd connected components and thus decrease the number of connected components by 1. Combining these cases together completes the proof of \eqref{eq: cluster number edge relation}. Furthermore, we also get that \begin{equation}\label{eq: exponent calculation tmp}
    |W^*|\ge \frac{1}{d-1}\cdot \sum_{i=1}^{k^*}(\kappa(\fS_i)+1).
\end{equation}

We now prove \eqref{eq: cluster merge trifurcation relation}. For any sets $D_1\subset D_2$, we have that if a vertex $x$ is a  single trifurcation of $D_2$ but not a single trifurcation of $D_1$, then we have $x\in \cup_{u\in D_2\setminus D_1}N(u)$. Therefore we get that $\Tri(D_2)-\Tri(D_1)\le d|D_2\setminus D_1|$. Since  $A^*_1, \ldots, A^*_{k^*}$ are  non-adjacent clusters, we have that $\Tri(\cup_{1\leq i\leq k^*} A^*_i) = \sum_{1\leq i\leq k^*} \Tri(A^*_i)$. Applying the preceding inequality with $D_2 = \cup_{1\leq i\leq k^*} A^*_i$ and $D_1 = A$, we derive the first inequality in \eqref{eq: cluster merge trifurcation relation} by noting that $|\cup_{1\leq i\leq k^*} A^*_i\setminus A| \leq  |W| \leq k$ (recall Lemma \ref{lem: number of extra points}).

Similar to single trifurcations, we can also get the following inequality for double trifurcations: $\Dtri(D_2)-\Dtri(D_1)\le d(d-1)|D_2\setminus D_1|$. By monotonicity, it suffices to prove the second inequality of \eqref{eq: cluster merge trifurcation relation} assuming $W = W^*$, which implies that $A^*_1, \cdots, A^*_{k^*}$  have pairwise distance strictly larger than $2$. Thus, $\Dtri(\cup_{1\leq i\leq k^*} A^*_{i}) = \sum_{1\leq i\leq k^*} \Dtri(A^*_{i})$. Applying the aforementioned inequality with $D_2 = \cup_{1\leq i\leq k^*} A^*_{i}$ and $D_1 = A$, we then get that 
\begin{equation}\label{eq: cluster merge trifurcation relation tmp}
    \sum_{i=1}^{k^*}\Dtri(A_i^*)\le d(d-1)k+\Dtri(A).
\end{equation} In addition, if $x$ is a double trifurcation of $A$ but is not a double trifurcation of $A_i$ for any $1\leq i\leq k$, then there exist $A_i$ and $A_j$ such that $\dist(x,A_i)=\dist(x,A_j)=2$. Hence we get that $x\in W$. In conclusion we get that $\Dtri(A)\le\sum_{i=1}^k \Dtri(A_i)+|W|$. Combined with \eqref{eq: cluster merge trifurcation relation tmp} and Lemma~\ref{lem: number of extra points}, this yields the second inequality of \eqref{eq: cluster merge trifurcation relation}. 

Finally, we prove \eqref{eq: dtri in extra point number bound}.
Combining \eqref{eq: exponent calculation tmp} with the second equality in \eqref{eq: cluster number edge relation}, we get \eqref{eq: dtri in extra point number bound} and thus complete the proof of the lemma.
\end{proof}

\section*{Acknowledgements}
J. Ding is supported by NSFC Tianyuan Key Program Project No.12226001, NSFC Key Program Project No.12231002, and by New Cornerstone Science Foundation through the XPLORER PRIZE. F. Huang is supported by the Beijing Natural Science Foundation Undergraduate Initiating Research Program (Grant No. QY23006).
We warmly thank Tom Spencer for introducing the problem to us.

\small
\bibliography{myref}
\bibliographystyle{abbrv}
\end{document}